\def\nset{{\mathbb{N}}}
\def\rset{\mathbb R}
\def\zset{\mathbb Z}
\def\eqsp{\;}
\newcommand{\pscal}[2]{\left\langle#1,#2\right\rangle}
\newcommand{\un}{\ensuremath{\mathbbm{1}}}
\newcommand{\eqdef}{\ensuremath{\stackrel{\mathrm{def}}{=}}}
\newcommand{\eps}{\varepsilon}
\def\Xset{\mathsf{X}} 
\def\Zset{\mathcal{Z}} 
\def\F{\mathcal{F}} 
\def\B{\mathcal{B}} 
\def\e{\mathcal{E}}
\def\N{\mathcal{N}}
\def\D{\mathcal{D}}
\def\A{\mathcal{A}}
\def\H{\mathcal{H}}
\newcommandx\sequence[3][2=t,3=\zset]
\def\PP{\mathbb{P}} 
\newcommand{\CPP}[3][]
{\ifthenelse{\equal{#1}{}}{{\mathbb P}\left(\left. #2 \, \right| #3 \right)}{{\mathbb P}_{#1}\left(\left. #2 \, \right | #3 \right)}}
\def\PE{\mathbb{E}} 
\newcommand{\CPE}[3][]
{\ifthenelse{\equal{#1}{}}{{\mathbb E}\left[\left. #2 \, \right| #3 \right]}{{\mathbb E}_{#1}\left[\left. #2 \, \right | #3 \right]}}
\def\W{\mathcal{W}}
\def\tv{\mathrm{tv}}
\def\Cset{\mathcal{C}} 
\newcommand{\normfro}[1]{\left\Vert#1\right\Vert_{\textsf{F}}}
\newcommand{\tnorm}[1]{\left\vert\!\left\vert\!\left\vert#1\right\vert\!\right\vert\!\right\vert}
\def\t{\textsf{T}}
\def\r{\textsf{r}}
\newtheorem{assumption}{H\hspace{-3pt}}
\newaliascnt{remark}{theorem}
\newtheorem{remark}[remark]{Remark}
\newaliascnt{example}{theorem}
\newtheorem{example}[example]{Example}
\def\rmd{\mathrm{d}}
\def\1{\mathbbm{1}}
\DeclareMathOperator*{\argmin}{{\textsf{Argmin}}}
\definecolor{ao(english)}{rgb}{0.0, 0.5, 0.0}
\begin{document}
\title{On the estimation rate of Bayesian PINN for inverse problems}

%

\author{Yi Sun\thanks{Department of Mathematics and Statistics, Boston University, Boston, MA (\email{ysun4@bu.edu}).}
\and Debarghya Mukherjee\thanks{Department of Mathematics and Statistics, Boston University, Boston, MA (\email{mdeb@bu.edu}).} 
\and Yves Atchad\'e\thanks{Department of Mathematics and Statistics, Boston University, Boston, MA (\email{atchade@bu.edu}).}}

\maketitle


\begin{abstract}
Solving partial differential equations (PDEs) and their inverse problems using Physics-informed neural networks (PINNs) is a rapidly growing approach in the physics and machine learning community. Although several architectures exist for PINNs that work remarkably in practice, our theoretical understanding of their performances is somewhat limited. In this work, we study the behavior of a Bayesian PINN estimator of the solution of a PDE from $n$ independent noisy measurement of the solution. We focus on a class of equations that are linear in their parameters (with unknown coefficients $\theta_\star$). We show that when the partial differential equation admits a classical solution (say $u_\star$), differentiable to order $\beta$,  the mean square error of the Bayesian posterior mean is at least of order $n^{-2\beta/(2\beta + d)}$.
Furthermore, we establish a convergence rate of the linear coefficients of $\theta_\star$ depending on the order of the underlying differential operator. 
Last but not least, our theoretical results are validated through extensive simulations. 
\end{abstract}

\begin{keywords}
partial differential equations, inverse problems, physics-informed neural network, nonparametric regression,  Bayesian deep learning, posterior contraction
\end{keywords}


\section{Introduction}\label{sec:intro}

We consider a class of inverse problems that consists in estimating the parameters of a partial differential equation from noisy measurements of the solution. Specifically, let $\Omega\subset \rset^{m}$ be a bounded open domain with a smooth boundary equipped with a probability measure $\nu$. For $j\geq 1$, let $L^2(\Omega,\rset^j,\nu)$ denote the $L^2$-space of $\rset^j$-valued functions on $\Omega$, i.e. 
$$
L^2(\Omega,\rset^j,\nu) \eqdef \left\{f: \Omega \mapsto \reals^j: \int_\Omega \|f(x)\|_2^2 \ \nu(\rmd x) < \infty \right\} \,.
$$
We consider a differential equation with parameter $\theta\in\rset^{d}$ that aims to find a smooth function $u\in L^2(\Omega,\rset,\nu)$ satisfying
\begin{equation}\label{main:pde}
\H_0 u + \theta^\t \H_1 u = f,\;\mbox{ with initial/boundary condition }\;\; \mathcal{B}u ={\bf 0},
\end{equation}
for some known function $f\in L^2(\Omega,\rset,\nu)$, and a pair of differential operators $\H_0:\; L^2(\Omega,\rset,\nu)\to L^2(\Omega,\rset,\nu)$ and $\H_1:\; L^2(\Omega,\rset,\nu)\to L^2(\Omega,\rset^{d},\nu)$. The operator $\H_0$ is typically (although not necessarily) the "time" partial derivative,  and the operator $\mathcal{B}:\;L^2(\Omega,\rset,\nu)\to L^2(\Omega,\rset^{d_1},\nu)$ imposes the boundary/initial conditions. A large class of differential equations (linear and nonlinear) that are linear in their parameters can be written in this form. 
A classical example of such an inverse problem arises from the heat equation as elaborated below:

\begin{example}[Heat equation]\label{ex:1}
Given $L>0,T>0$, and parameter $\theta\in\rset$, consider  the heat equation $u_t -\theta u_{xx}=0$ on $(0,L)$, with time domain $(0,T)$, initial condition $u(0,\cdot) = g(\cdot)$, and boundary condition $u(\cdot,0)=u(\cdot,L)=0$. Here $u_t$ (resp. $u_{xx}$) denotes the partial derivative of $u$ with respect to $t$ (resp. the second order partial derivative of $u$ with respect to $x$). With $\Omega = (0,T)\times (0,L)$, this equation can be written as $\H_0 u -\theta \H_1 u = f$, where $\H_0 u = u_t$, $\H_1 u = u_{xx}$, and $f=0$. The boundary/initial condition operator is given by
\[\mathcal{B} u =\left(\begin{array}{c}u_{\{t=0\}}-g\\ u_{\{x=0\}}\\ u_{\{x=L\}}\end{array}\right).\]
 where $u_{\{t=0\}}$ is the map $u_{\{t=0\}}(x)= u(0,x)$, with similar definition for $u_{\{x=0\}}$ and $u_{\{x=L\}}$.   Here $d=1$, $d_1=3$.
\end{example}

Let $(\theta_\star, u_\star)$ be a tuple that satisfies the PDE \eqref{main:pde}. 
We assume to observe some noisy measurements of $u_\star$ as noted below: 
\begin{assumption}\label{H1}
We have $n$ i.i.d. random locations and observations $({\bf s}_i,Y_i)\in\Omega\times \rset$, where
\[Y_i \; \vert \;  {\bf s}_i \stackrel{ind.}{\sim} \mathbf{N}\left(u_{\star}({\bf s}_i),\sigma^2\right),\]
where $u_\star:\Omega\to\rset$ is the unique solution of the pde (\ref{main:pde}) with $\theta=\theta_\star$, for some unknown vector $\theta_\star\in\rset^d$. Here, $\textbf{N}(m,v^2)$ denotes the univariate Gaussian distribution with mean $m$ and variance $v^2$. The variance parameter $\sigma^2$ is assumed to be known. Throughout we write $\PP$ for the joint distribution of $({\bf s}_1, Y_1),\ldots,({\bf s}_n, Y_n)$, and $\PP_n$ for their corresponding empirical measure.
\end{assumption}

Our goal is to estimate $(\theta_\star, u_\star)$ using the noisy measurements. Over the last few years, physics-informed neural network (PINN) has taken the numerical pde literature by storm (\cite{raissi:etal:19,yu2018deep,sirignano2018dgm,sun2020surrogate}, and \cite{cuomo:etal:22} for an extensive review of the literature). In the setting of H\ref{H1}, the approach consists of estimating $u_\star$ by regression while explicitly using the information that the true function $u_\star$ is the solution of a PDE.  In this paper, as typically done in the PINN literature, we resort to a sieve-based approach for estimating $u_\star$, i.e., we consider a sequence of increasingly complex parametric models. Hence let $\F\eqdef \{u_W,\;W\in\rset^q\}$ be a function class, where $u_W:\;\Omega\to\rset$ is a function with parameter $W\in\rset^q$. Due to their superior empirical performances, $u_W$ is typically taken as a deep neural network, and we follow that practice, although our results can be applied more broadly. Given the data $\D\eqdef\{({\bf s}_i,Y_i),\;1\leq i\leq n\}$, PINN in its frequentist formulation estimates jointly  $\theta$ and $W$ by minimizing the loss function 
\begin{equation}\label{loss}
\frac{1}{2\sigma^2}\sum_{i=1}^n\left(Y_i - u_W({\bf s}_i)\right)^2 + \frac{\lambda}{2} \left\{\alpha_1|f - \H_0 u_W -  \theta^\t\H_1 u_W|_2^2 +\alpha_2|\mathcal{B} u_W|_2^2\right\},
\end{equation}
for some regularization parameters $\lambda\geq 0$, and $\alpha_1,\alpha_2\geq 0$, where $|\cdot|_2$ denotes the $L^2$-norm on the appropriate function space. 
In this work, we approach the problem from a Bayesian perspective. Hence, starting from a standard Gaussian prior for $\theta\in\rset^d$, and a sparsity inducing prior density $\Pi_0$ for $W\in\rset^q$ (see Section \ref{sec:prior} below for our choice of $\Pi_0$), we define the informative PINN prior distribution for $(\theta,W)$ as the probability measure  on $\rset^d\times\rset^q$  with density proportional to 
\begin{equation}\label{pinn:prior}
(\theta,W)\mapsto \Pi_0(W)\exp\left(-\frac{\lambda}{2} \left\{\alpha_1|f - \H_0 u_W -  \theta^\t\H_1 u_W|_2^2 +\alpha_2|\mathcal{B} u_W|_2^2\right\} -\frac{1}{2}\|\theta\|_2^2\right).
\end{equation}
For appropriately large choices of $\lambda$, any realization $(\theta,W)$ from the  PINN prior distribution (\ref{pinn:prior}) produces $(\theta,u_W)$ that is biased toward solving the pde (\ref{main:pde}). Rigorous general results (not specific to PINN) of this flavor can be found, for instance, in \cite{hwang:80}. When $\theta$ is known, realizations from the conditional distribution of $W$ given $\theta$ in (\ref{pinn:prior}) produce $u_W$ that approximately solve the pde (\ref{main:pde}) for the given $\theta$. This corresponds to the initial PINN methodology of  \cite{raissi:etal:19}. 

For some convenience in the analysis, we will use the prior (\ref{pinn:prior}) with $\alpha_1=1$, and $\alpha_2 = 0$. Since the parameter $\theta$ does not appear in the boundary condition, the case $\alpha_2>0$ (although useful in practice) does not induce any fundamentally new behavior, and at the expense of more involved notations, our analysis can be easily modified to handle $\alpha_2>0$. Given the observed data $\D$, the data generating model postulated in (H\ref{H1}), and the PINN prior distribution (\ref{pinn:prior}), we thus consider the posterior distribution
\begin{multline}\label{post:joint:ideal}
\Pi(\rmd\theta,\rmd W\vert\D) \propto \\
\exp\left(-\frac{1}{2\sigma^2}\sum_{i=1}^n\left(Y_i - u_W({\bf s}_i)\right)^2- \frac{\lambda}{2} |f - \H_0 u_W -  \theta^\t\H_1 u_W|_2^2 -\frac{\|\theta\|_2^2}{2}\right) \Pi_0(\rmd W) \rmd\theta.
\end{multline}

\subsection{Main contributions}
We study the behavior of the posterior distribution (\ref{post:joint:ideal}) as $n\to\infty$ (and $q\to\infty$). We consider the case where the operator $\H \eqdef (\H_0,\H_1)$ is a differential operator that involves derivatives up to order $\tau$, for some $\tau>0$. Under some additional regularity conditions, we establish in Theorem \ref{thm:marg:theta} that when the true pde solution $u_\star$ is a classical solution that possesses derivatives up to order $\beta\geq \tau$, the mean square error of the posterior mean of $\theta$ under (\ref{post:joint:ideal}) satisfies (up to log terms that we ignore),
\begin{equation}
\label{eq:theta_posterior_conv}
\PE\left[\left\|\int_{\rset^d} \vartheta \Pi^{(\theta)}(\rmd \vartheta\vert\D) -\theta_\star \right\|_2^2\right] \lesssim n^{-2(\beta-\tau)/(m + 2\beta)},
\end{equation}
where $\Pi^{(\theta)}$ is the $\theta$-marginal of (\ref{post:joint:ideal}). In fact we obtain this rate by showing that the 2-Wasserstein distance between $\Pi^{(\theta)}$ (the posterior distribution of $\theta$) and the Gaussian distribution $\mathbf{N}(\theta_\star,\lambda^{-1}\Sigma_\star)$ converges to zero at the rate given above (see Section \ref{sec:main:res} for the definition of $\Sigma_\star$). Our simulation results suggest that, in general, this convergence does not hold in the total variation metric. We also show in Theorem \ref{thm:margW} that the pde solution  $u_\star$ is estimated at a rate at least as fast as the nonparametric minimax optimal rate:
\[\PE\left[\int_{\rset^q}\;|u_W - u_\star|_2^2\Pi^{(W)}(\rmd W\vert \D)\right] \;\lesssim n^{-2\beta/(m + 2\beta)},\]
where $\Pi^{(W)}$ denotes the $W$-marginal of (\ref{post:joint:ideal}). We conjecture that the PINN posterior distribution actually converges faster than the nonparametric minimax rate. Indeed, in a limiting case of an infinitely strong PINN prior, we construct an estimator which, although computationally intractable in general, achieves the parametric rate $n^{-1/2}$ (Theorem \ref{thm:main_param}). 

\subsection{Related work}
Despite their popularity, the theoretical aspects of PINN-related methods remain under-studied. 
Most of the existing theoretical literature on PINN deals with the forward problem of estimating $u$ for a given $\theta$, when the $L^2$-norm $|\cdot|_2$ is replaced by a Monte Carlo approximation (\cite{mishra:etal:22,shin:etal:2023,lu-chen:etal:2022}). The objective in this line of work is to derive the mean square error of the PINN solution as a function of the Monte Carlo sample size. More related to our work, \cite{mishra:etal:21} considers a PINN inverse problem where the parameter of interest appears in the boundary condition. They work mainly in a noiseless (or small-magnitude noise) data regime. Furthermore, they assume a continuous space observation model, which is substantially different from the framework considered here.

Inverse problems as related to differential and partial differential equations have a long history that predates the current PINN literature (\cite{ramsey:etal:07,xun:Etal:2013,mcgoff:etal:15}). In particular, we note the close similarity between the PINN methodology and the approach by \cite{xun:Etal:2013}. The estimation rate in pde-driven inverse problems has been extensively studied in statistics in recent years (\cite{spokoiny2020bayesian,giordan:etal:2020,nickl:etal:2020,monard:etal:21,dehoop:etal:23}). However, none of these results are directly applicable to our setting or PINN more generally, since in this literature, the forward map is typically assumed known and is not parametrized as in PINN. 

A Bayesian version of PINN was proposed and studied in (\cite{yang:bpinn:21}) from an empirical viewpoint, where the authors noted a certain robustness of Bayesian PINN to measurement noise. However, no theoretical analysis is proposed.

A related version of the inverse problem considered in this work assumes that $\theta$ is high-dimensional and sparse (\cite{deepMod:21,Chen2020Physicsinformed}). The goal is then to estimate $\theta$ under a sparsity assumption. These physics discovery problems can be viewed as high-dimensional versions of the problem investigated here. Thus, our work serves as a stepping-stone toward a theoretical understanding of high-dimensional physics discovery inverse problems.


\subsection{Outline of the paper}
We close the introduction with a description of the function class $\F=\{u_W,\;W\in\rset^q\}$, the prior distribution $\Pi_0$  for $W$, and a description of the main notations that we use throughout the paper. Section \ref{sec:main:res} deals with the estimation rate for $u_\star$. A Bernstein-von Mises theorem for the marginal distribution of $\theta$ is discussed in Section  \ref{sec:bvm}. Section \ref{sec:illust} presents our numerical illustrations of the theoretical findings.

\subsection{The function class $\F=\{u_W\}$}\label{sec:Fw}
Although our results apply widely, we follow the PINN literature and focus on the case where the function class $\F$ is constructed using neural networks. Let $\xi>0$ be the depth of the network. Let $(p_\xi,\ldots,p_{0})$ be a sequence of integers representing the sizes of the layers of the network, with $p_0=m$, and $p_\xi=1$. For $1\leq i\leq \xi$, let $\mathsf{A}_i:\;\rset^{p_i}\to \rset^{p_i}$ be a component-wise application of a 1-Lipschitz function $\mathsf{a}_i:\rset\to\rset$. For  $B\in\rset^{p_i\times p_{i-1}}$, and $b\in\rset^{p_i}$, we set
\begin{equation}\label{psi:fun}
\Psi_{B,b}^{(i)}({\bf z}) \eqdef \mathsf{A}_i(B {\bf z}+b),\;\;\;{\bf z}\in\rset^{p_{i-1}}.\end{equation}
We consider functions $u_W$ of the form 
\begin{equation}\label{u:fun}
u_W({\bf x}) = \Psi_{W_{\xi},w_\xi}^{(\xi)}\circ \cdots  \circ \Psi_{W_1,w_1}^{(1)} ({\bf x}),\;\;\;{\bf x}\in\rset^{m},
\end{equation}
with parameter $W= (W_\xi,w_\xi,\ldots,W_1,w_1)$, where $W_i\in\rset^{p_i\times p_{i-1}}$, and $w_i\in\rset^{p_i}$, and where $f\circ g$ is the composition of $f$ with $g$. For convenience, and by vectorization we view $W$ as an element of $\rset^q$, where 
\[q \eqdef \sum_{i=1}^\xi p_i(1+p_{i-1}). \]
We assume that the activation functions $\mathsf{a}_\ell$ are of class $\Cset^\infty$, which implies that $u_W$ is of class $\Cset^\infty$. The activation function $\mathsf{a}_\xi$ in the last layer is typically taken as the identity function. 

\subsection{The prior distribution $\Pi_0$}\label{sec:prior}
We use sparsity to control the complexity of the function class $\{u_W,\;W\in\rset^q\}$. We use a version of the spike-and-slab prior distribution taken from \cite{AB:19}. We set $\mathcal{S} \eqdef \{0,1\}^{q}$. To construct the prior $\Pi_0$, we first define a pair of random vectors $(\Lambda,W)\in\mathcal{S}\times\rset^q$ as follows. Let $\Lambda_j\stackrel{i.i.d}{\sim}\textbf{Ber}((1+q^{\mathsf{u}+1})^{-1})$, $1\leq j\leq q$, for some sparsity parameter $\mathsf{u}\geq 1$, where $\mathbf{Ber}(\alpha)$ denotes the Bernoulli distribution with parameter $\alpha\in (0,1)$.  Given $\Lambda\in \mathcal{S}$, the components of $W$ are independent with joint density on $\rset^q$ given by
\[W\mapsto \prod_{j:\;\Lambda_{j}=1} \;\sqrt{\frac{1}{2\pi}} e^{-\frac{1}{2} W^2_{j}}  \prod_{j:\;\Lambda_{j}=0} \sqrt{\frac{\rho_0}{2\pi}}e^{-\frac{\rho_0}{2}W_{j}^2},\]
for some given parameter $\rho_0>1$. We denote $\bar\Pi_0$ the joint distribution of $(\Lambda,W)$, and we let $\Pi_0$ (our prior on $\rset^q$) be the distribution of $ W \odot \Lambda $, where for $a,b\in\rset^q$, $a\odot b$ denotes the component-wise product of $ a $ and $ b $. By construction, for any measurable function $f:\rset^q\to\rset$, we have
\[\Pi_0(f) \eqdef \int_{\rset^q} f(v)\Pi_0(\rmd v) = \int_{\mathcal{S}\times \rset^q} f(\Lambda\odot W)\bar\Pi_0(\rmd \Lambda,\rmd W).\]

\subsection{Notation}\label{sec:notations}
For $\alpha\geq 1$, $\|\cdot\|_\alpha$ denotes the $\ell^\alpha$-norm on finite-dimensional Euclidean spaces ($\rset^d$, $\rset^n$, $\rset^{d}$, etc, -- which space should be clear from the context), and we use $a^\t b$ to denote the inner product between two finite dimensional vector $a,b$. As usual, we allow $\alpha=+\infty$ (resp. $\alpha=0$),  by defining $\|a\|_\infty\eqdef \max_{1\leq i\leq d} |a_i|$ (resp. $|a|_0$ is the number of non-zero components of $a$). Given a matrix $A$, $\lambda_{\textsf{min}}(A)$ (resp. $\|A\|_{\textsf{op}}$) denotes its smallest (resp. largest) singular value.

Let $\Omega\subset \rset^{m}$ be as above with a probability measure $\nu$ (typically the uniform measure on $\Omega$).  Given an integer $j\geq 1$, $L^2(\Omega,\rset^j,\nu)$ denotes the Hilbert $L^2$-space of $\rset^j$-valued functions with inner product 
\[\pscal{f_1}{f_2}\eqdef \int_{\Omega} f_1({\bf x})^\t f_2({\bf x})\nu(\rmd{\bf x}), \;\; \mbox{ and norm } \;\; |f|_2\eqdef \sqrt{\pscal{f}{f}}.\] 
For $f:\;\Omega\to\rset^j$, we set $|f|_\infty\eqdef \sup_{x\in\Omega}\|f(x)\|_\infty$.

We use multi-index derivatives: given ${\bf k}=(k_1,\ldots,k_m)$, where $k_i\geq 0$ is an integer,
\[D^{\bf k} u(x) \eqdef \frac{\partial^{|{\bf k}|}u(x)}{\partial x_1^{k_1}\ldots \partial x_m^{k_m}},\]
where $|{\bf k}| =\sum_i k_i$. Given $\beta>1$, we set 
\[|u|_{\Cset^\beta}\eqdef \sum_{{\bf \alpha}:\;|{\bf \alpha}|\leq \lfloor \beta\rfloor}\; |D^{\bf \alpha} u|_\infty + \sum_{\alpha:\;|\alpha| = \lfloor \beta\rfloor}\; \sup_{x\neq y}\frac{|D^{\bf \alpha} u(y) - D^{\bf \alpha} u(x)|}{\|y-x\|_2^{\beta- \lfloor \beta\rfloor}},\]
where $\lfloor \beta\rfloor$ is the largest integer strictly smaller than $\beta$. $|u|_{\Cset^\beta}$ is the Holder norm of $u$. We set $\Cset^{\beta}(\Omega)$  (resp. $\Cset^{\beta}(\Omega,b)$) as the set of all function $u:\;\Omega\to\rset$ with finite Holder norm (resp. with Holder norm bounded by $b$). 


Throughout the paper, we use $C_0,C_1,C_2,\etc$ to denote absolute constants and $C$ to denote a generic absolute constant that depends on $C_0,C_1,\etc$. The actual value of $C$ may change from one appearance to the next. Similarly, $c_0,c_1,\etc$ denote problem-dependent constants (that do not depend on the sample size $n$ or the model size $q$). Specifically, these constants typically depend on the noise parameter $\sigma^2$, the dimension $d$, and the true parameter $\theta_\star$. We will also use $c$ to denote a generic constant that depends on $c_0,c_1,\etc$, and that we do not track. The actual value of $c$ may change from one appearance to the next.

\section{Estimation rate of $u_\star$}\label{sec:main:res}
We  maintain throughout the basic assumption that the operator $\H = (\H_0,\H_1)$  is a differential operator of order $\tau>0$,  in the sense that for all $\beta>\tau$, and for $u,v\in\Cset^\beta(\Omega)$,
\begin{equation}
\label{lip:H}
\left|\H u - \H v\right|_2 \leq C_0 \max_{{\bf k}:\;|{\bf k}|\leq \tau} \;\left|D^{\bf k} u - D^{\bf k} v\right|_\infty,
\end{equation}
for some absolute constant $C_0$. 

Because the pde equation is linear in $\theta$, and the log-prior is quadratic in $\theta$, it is straightforward to integrate out $\theta$ from the posterior distribution (\ref{post:joint:ideal}). We first introduce some appropriate notations to do this. For $W\in\rset^q$, we define $\bar f_W \eqdef f - \H_0 u_W$. Let $\Phi_W\in\rset^d$ be the vector with $j$-th component given by
\[(\Phi_W)_j \eqdef \pscal{\bar f_W}{(\H_1 u_W)_j},\] 
and where $(\H_1 u_W)_j$ denotes the $j$-component of $\H_1 u_W$. We define $\Phi_\star\in\rset^{d}$ similarly, replacing $u_W$ by $u_\star$.  Let $\Sigma_W\in\rset^{d\times d}$ given by
\[(\Sigma_W)_{j,k}  \eqdef   \pscal{(\H_1 u_W)_j}{(\H_1 u_W)_k}\;  + \;\frac{1}{\lambda}\textbf{1}_{\{j=k\}},\;\; 1\leq j,k\leq d.\]
 We define $\Sigma_\star$ similarly by replacing $u_W$ by $u_\star$. Given $u\in L^2(\Omega,\rset,\nu)$ we define
\begin{equation}\label{def:J}
{\cal J}(u)\eqdef \min_{\theta\in\rset^d}\;\left[|f - \H_0 u -\theta^\t \H_1 u|_2^2 +\frac{1}{\lambda}\|\theta\|_2^2\right].\end{equation}
It is easy to check that ${\cal J}(u_W) = |\bar f_W|_2^2 - \Phi_W^\t \Sigma_W^{-1}\Phi_W$, and we can integrate out $\theta$ from the posterior distribution (\ref{post:joint:ideal}) to obtain the marginal distribution of $W$ given by 
\begin{equation}\label{marginalW}
\Pi(W\vert\D) \propto \frac{\Pi_0(W)}{\sqrt{\det(\Sigma_W)}}\exp\left(-\frac{1}{2\sigma^2}\sum_{i=1}^n(Y_i - u_W({\bf s}_i))^2 -\frac{\lambda}{2}{\cal J}(u_W)\right).
\end{equation}
At times, when the distinction is needed we will write $\Pi^{(W)}$ to denote (\ref{marginalW}).
We make the following basic assumption on the function class $\F\eqdef \{u_W,\;W\in\rset^q\}$.
\medskip

\begin{assumption}\label{H:lip}
\begin{enumerate}
\item For all $W_0\in\rset^q$, and $r>0$, we can find $L_{W_0,r}\geq 1$ such that if  $W_1,W_2\in\rset^q$ satisfy $\max(\|W_1 - W_0\|_2, \|W_2 - W_0\|_2) \leq r$, then it holds
\[\left|u_{W_1} - u_{W_2}\right|_\infty \leq L_{W_0,r}\;\|W_1 - W_2\|_2.\]
\item For all $W\in\rset^q$, $u_W$  has derivatives to the order $\tau$, and there exists $\kappa>0$ and a constant $c_1$ such that for all $W_1,W_2\in\rset^q$, such that $|u_{W_1} - u_{W_2}|_\infty\leq \varepsilon$,  we have
\[\max_{{\bf k}:\;|{\bf k}|\leq \tau} \;\left|D^{\bf k} u_{W_1} - D^{\bf k} u_{W_2}\right|_\infty \leq c_1 \varepsilon^{\kappa}.\]
\end{enumerate}
\end{assumption}

\medskip

\begin{remark}
The local Lipschitz condition imposed in H\ref{H:lip}-(1) is known to hold for most deep neural net architectures with Lipschitz activation functions (see e.g. Proposition 6 of \cite{taheri:etal:21}). Specifically, if $u_W$ is a deep neural network as constructed in Section \ref{sec:Fw} with 1-Lipschitz activation functions, then with $W_1,W_2\in\rset^q$ satisfying $\max(\|W_1 - W_0\|_2, \|W_2 - W_0\|_2) \leq r$, Proposition 6 of \cite{taheri:etal:21} and some easy calculations yield
\[|u_{W_1} - u_{W_2}|_\infty \leq L_{W_0,r} \|W_1 - W_2\|_2,\]
with
\begin{equation}\label{LWr}
L_{W_0,r}  = \left(\sup_{{\bf x}\in\Omega}\|{\bf x}\|_2\right) \; \sqrt{\xi}\; \left( 1 + \frac{(\|W_0\|_2 + r)^2}{\xi}\right)^{\xi},\end{equation}
where $\xi$ is the depth of the DL function.
\end{remark}

\begin{remark}
Since we aim to solve pdes, it is natural to require the function $u_W$ to be smooth. H\ref{H:lip}-(2) further requires a Holder-type assumption on the differential operator $D^{{\bf k}}$ over the function class $\F$. This assumption is less common in the literature, and we check its validity in Theorem \ref{thm:approx:deriv}. 
\end{remark}

\medskip

Let $\epsilon:\;\nset\to\rset$ be a nonincreasing function such that for all $s\geq 1$
\begin{equation}\label{def:eps}
\min\left\{|u_W - u_\star|_\infty,\;W\in\rset^q:\;\|W\|_0\leq s \right\} \leq \epsilon(s).\end{equation}
$\epsilon(s)$ defines the approximation skills of the function class $\{u_W\}$ at the sparsity level $s$, and is a nonincreasing function of $s$. Of importance are sparsity levels $s>1$ at which the approximation error $\epsilon(s)$ matches the statistical error:
\begin{equation}
\label{eq:approx_s_val}
\epsilon(s) \leq \sigma\sqrt{\frac{\mathsf{u} s\log(q)}{n}}.
\end{equation}

We make the following assumption.
\medskip

\begin{assumption}\label{H:approx}
There exists $\epsilon:\;\nset\to\rset$ such that (\ref{def:eps}) holds. Given $n$ and $q$, let $s_0\geq 2$ be the smallest integer satisfying (\ref{eq:approx_s_val}), and $\epsilon_0 \eqdef \epsilon(s_0)$.  We assume that there exists $W_0\in\rset^q$, with $\|W_0\|_0\leq s_0$ that satisfies $|u_{W_0} - u_\star|_\infty\leq \epsilon_0$, and
\begin{equation}\label{tech:cond:2}
\min\left(\lambda_{\textsf{min}}(\Sigma_{W_0}),\lambda_{\textsf{min}}(\Sigma_{\star})\right)\geq C_2, \end{equation}
for some absolute constant $C_2$. Furthermore, we assume that $\lim_{n\to\infty}\epsilon_0 = 0$, and 
\begin{equation}\label{tech:cond:3}
(\|W_0\|_\infty +1)^2 + \log\left(Ls_0^{1/2} / \epsilon_0\right) \leq C_3 \mathsf{u}\log(q),
\end{equation}
for some absolute constant $C_3$, where  $L\eqdef L_{W_0,1}$ is as in H\ref{H:lip}.
\end{assumption}
\medskip

\begin{remark}\label{rem:H3}
We recall that we aim to estimate $u_\star$ by the sieve method, i.e., we approximate $u_\star$ by a collection of functions $\cF \equiv \cF_n$, where the \emph{size} of the function class typically grows with the sample size $n$. The assumption that $\epsilon_0\to 0$ as $n\to\infty$ in H\ref{H:approx} highlights the relationship between the sample size $n$ and the model size: given $n$, we assume that the model $\cF$ is chosen such that its approximation error at some sparsity level $s$ can match (or be smaller than) the statistical error at sparsity level $s$.

There has been a flurry of research activities in recent years to derive precise estimates on $\epsilon(s)$ for various DL function classes, using various norms (\cite{yarostky:17,shieber:20,lu:shen:etal:21,belomestny:2022}). 
For example for  piecewise polynomial  activation functions, and $u_W$ as defined in Section \ref{sec:Fw}, it is known (see e.g. \cite{yarotsky:20,lu:shen:etal:21}) that at the sparsity level $s$, and depth $\xi=O(\log(s))$, $\F$ can approximate a function $u_\star\in\Cset^\beta(\Omega)$ with precision $(1/s)^{\beta/m}$. 
This yields 
\[\epsilon(s) \lesssim  (1/s)^{\beta/m}. \]
In that case, solving for $s_0$ in (\ref{eq:approx_s_val}) yields
\[ s_0 \sim \left(\frac{n}{\log(q)}\right)^{\frac{m}{m + 2\beta}},\;\mbox{ and }\;\; \epsilon_0 \sim  \left(\frac{\log(q)}{n}\right)^{\frac{\beta}{m+2\beta}}.\]

In the high-dimensional setting considered here where $q$ is typically much larger than $n$, (\ref{tech:cond:3}) is typically satisfied. 
\end{remark}

\medskip

Ultimately, we need some assumptions on the stability of the pde operator $\H$ as presented below: 
\medskip

\begin{assumption}\label{H:stability}
With $\kappa$ as in H\ref{H:lip}-(2), there exists an absolute constant $C_4$ such that for all $\varepsilon>0$, if $|u_W-u_\star|_2\leq \varepsilon$, then $|\H u_W - \H u_\star|_2 \leq C_4 \varepsilon^\kappa$.
\end{assumption}

\medskip

\begin{remark}
H\ref{H:stability} is similar to H\ref{H:lip}-(2) and both can  be checked in a similar way if $u_\star$ is a classical solution. Indeed, if the pde solution $u_\star$ is a classical solution, and $u_\star\in\Cset^\beta(\Omega)$ for some $\beta>\tau$, then by (\ref{lip:H}), and Theorem \ref{thm:approx:deriv} below, H\ref{H:stability} holds with  $\kappa = 1-\tau/\beta$.
\end{remark}

\medskip

Given $s\geq 0$, and constant $C$, we set $\tau_s\eqdef \sqrt{(2+\mathsf{u})(1+s)\log(q)}$, and
\[\F_{s}=\F_{s,C}\eqdef\left\{u_W\in\F:\; \|W\|_0\leq s,\;\|W\|_\infty\leq \tau_s,\;\;{\cal J}(u_W) \leq \frac{C n\epsilon_0^2}{\lambda\sigma^2} \right\},\] 
where ${\cal J}$ is as defined in (\ref{def:J}). We show below that the PINN posterior distribution puts high probabilities on the sets $\F_s$. Specifically, the following theorem is proved as Lemma \ref{lem:good:set} in Section \ref{sec:prior:concentration}.

\begin{theorem}\label{thm:prio:conc}
Assume H\ref{H1}-H\ref{H:stability}. Let Let $\epsilon_0$, $W_0$ be as in H\ref{H:approx}. We can find an absolute constant $C$ such that with $s = C \|W_0\|_0$, it holds,
\[\PE\left[\Pi\left(\F_{s,C}\vert \D\right)\right] \geq 1 - C_0 e^{-\frac{n\epsilon_0^2}{2\sigma^2}},\]
for some absolute constant $C_0$. 
\end{theorem}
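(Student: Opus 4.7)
The plan is to apply the standard evidence-lower-bound machinery of Bayesian posterior contraction to the marginal posterior $\Pi(W\vert\D)$ in (\ref{marginalW}). Write the unnormalized posterior density as
\[g(W) \eqdef \frac{\Pi_0(W)}{\sqrt{\det \Sigma_W}} \exp\!\left(-\frac{1}{2\sigma^2}\sum_{i=1}^n (Y_i - u_W({\bf s}_i))^2 - \frac{\lambda}{2}{\cal J}(u_W)\right),\]
and set $N\eqdef \int_{\rset^q} g(W)\,\rmd W$. I would aim to show, on a $\PP$-event of probability at least $1 - C_0 e^{-n\epsilon_0^2/(2\sigma^2)}$, a lower bound $N \geq N_{LB}$ of the form $\exp(-c_1 n\epsilon_0^2/\sigma^2)$, and simultaneously $\PE\bigl[\int_{\F_{s,C}^c} g(W)\,\rmd W\bigr] \leq N_{LB}\cdot e^{-n\epsilon_0^2/(2\sigma^2)}$. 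The theorem then follows from the Markov-type decomposition $\PE[\Pi(\F_{s,C}^c\vert\D)] \leq \PP(N < N_{LB}) + N_{LB}^{-1}\,\PE\bigl[\int_{\F_{s,C}^c} g(W)\,\rmd W\bigr]$.

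For the \emph{evidence lower bound}, I would restrict the integral defining $N$ to a small ball $B(W_0,r)$ of radius $r\asymp \epsilon_0/L$ with $L = L_{W_0,1}$ as in H\ref{H:lip}-(1), so that $|u_W - u_{W_0}|_\infty \leq \epsilon_0$ and hence $|u_W - u_\star|_\infty \leq 2\epsilon_0$ throughout the ball. Four ingredients then combine: (a) a Gaussian concentration for the quadratic $\sum_i(Y_i - u_W({\bf s}_i))^2$ around $n\sigma^2 + \sum_i (u_W - u_\star)^2({\bf s}_i)$, producing an $e^{-c n\epsilon_0^2/\sigma^2}$ lower bound on the data likelihood on a high-probability $\PP$-event; (b) control of the PINN term via the variational definition (\ref{def:J}) --- plug in $\theta = \theta_\star$ and invoke H\ref{H:stability} to get ${\cal J}(u_{W_0})\lesssim (1+\|\theta_\star\|_2)^2\epsilon_0^{2\kappa} + \lambda^{-1}\|\theta_\star\|_2^2$; (c) the spike-and-slab small-ball estimate $\Pi_0(B(W_0,r)) \geq e^{-c s_0 \log q}$, using $\|W_0\|_0\leq s_0$ and (\ref{tech:cond:3}) to absorb the radius $r$ into the sparsity cost, which is of order $N_{LB}$ thanks to the calibration $\epsilon_0^2 \asymp \sigma^2 \mathsf{u} s_0 \log(q)/n$ (cf. Remark \ref{rem:H3}); and (d) continuity of $W\mapsto \Sigma_W$ combined with (\ref{tech:cond:2}) to lower bound $(\det \Sigma_W)^{-1/2}$ on the ball.

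For the \emph{bad-set mass}, I would partition $\F_{s,C}^c$ into the three subsets violating each defining constraint and bound the corresponding contribution to $\int g\,\rmd W$ separately. (i) On $\{\|W\|_0 > s\}$, the spike-and-slab sparsity tail yields $\bar\Pi_0(\|\Lambda\|_0 > s) \leq q^{-(\mathsf{u}+1)(s - s_0)}$; combined with the trivial likelihood bound $\leq 1$ and the uniform estimate $(\det\Sigma_W)^{-1/2} \leq \lambda^{d/2}$, the choice $s = C\|W_0\|_0$ with $C$ large enough makes this contribution $\leq N_{LB}\, e^{-n\epsilon_0^2/(2\sigma^2)}$. (ii) On $\{\|W\|_\infty > \tau_s\}$, Gaussian tail bounds on the slab component of $\Pi_0$ give at most $2q\, e^{-\tau_s^2/2}$, which by the definition $\tau_s = \sqrt{(2+\mathsf{u})(1+s)\log q}$ is much smaller than what is required. (iii) On $\{{\cal J}(u_W) > Cn\epsilon_0^2/(\lambda\sigma^2)\}$, the exponential factor $e^{-\lambda {\cal J}(u_W)/2} \leq e^{-Cn\epsilon_0^2/(2\sigma^2)}$ directly yields the desired decay after taking expectation in $\PP$.

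The main technical delicacy I anticipate is the interplay between the $(\det\Sigma_W)^{-1/2}$ factor, the PINN strength $\lambda$, and the statistical rate $\epsilon_0$: on the numerator side one only has the uniform estimate $(\det\Sigma_W)^{-1/2} \leq \lambda^{d/2}$, which contributes a $\tfrac{d}{2}\log\lambda$ term to the exponent that must be dominated by $n\epsilon_0^2/\sigma^2$. Since $d$ is fixed and the eventual choice of $\lambda$ in the main theorems scales polynomially in $n$, this is a bookkeeping constraint rather than a genuine obstruction, but the constants in the definition of $\F_{s,C}$ (the $C$ multiplying $\|W_0\|_0$ and the $C$ bounding ${\cal J}$) must be tuned carefully so that the three bad-set contributions all fall below $N_{LB}\, e^{-n\epsilon_0^2/(2\sigma^2)}$.
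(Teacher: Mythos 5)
Your proposal follows essentially the same route as the paper's: an evidence lower bound obtained by restricting the integral to a sparse $\Pi_0$-neighborhood of $W_0$ (this is the paper's Lemma \ref{lem:lb}, including your items (a)--(d)), combined with a decomposition of $\F_{s,C}^c$ into the sparsity-, sup-norm-, and ${\cal J}$-violating pieces, each killed by the spike-and-slab tails or the $e^{-\lambda{\cal J}(u_W)/2}$ factor (the paper's Lemma \ref{lem:good:set} together with Lemma \ref{lem:prior}).

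One internal inconsistency in your write-up needs fixing. With $g(W)$ as you define it, containing the raw quadratic $\exp(-\frac{1}{2\sigma^2}\sum_i(Y_i-u_W({\bf s}_i))^2)$, the evidence $N$ is of order $e^{-n/2}$ times lower-order factors --- your own item (a) records that the quadratic concentrates around $n\sigma^2+\sum_i(u_W-u_\star)^2({\bf s}_i)$ --- so a lower bound $N_{LB}$ of the form $\exp(-c_1 n\epsilon_0^2/\sigma^2)$ cannot hold. You must first normalize numerator and denominator by the likelihood at $u_\star$, i.e.\ work with $\exp(-\frac{n}{2\sigma^2}\PP_n(\Delta_W-\Delta_\star))$ as in \eqref{marginalW:2}. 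Once you do, however, the ``trivial likelihood bound $\leq 1$'' invoked in your item (i) fails pointwise, since the likelihood \emph{ratio} can exceed $1$. The correct replacement, and what the paper uses, is Fubini together with $\CPE{\exp(-\frac{n}{2\sigma^2}\PP_n(\Delta_W-\Delta_\star))}{{\bf s}_{1:n}}=1$, which reduces $\PE[\int_{\F_{s,C}^c}\cdot\,]$ to the pure prior integral $\int_{\F_{s,C}^c}e^{\mathcal{R}_W}\Pi_0(\rmd W)$; your three tail estimates then apply verbatim, with the $\log\det\Sigma_W$ contribution absorbed via \eqref{tech:cond:1} rather than the cruder $\lambda^{d/2}$ bound. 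This is a repair of bookkeeping rather than of the underlying idea, but as written the combination of your $N_{LB}$ and your numerator bound is not consistent.
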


To highlight this point, we note that if a function $u\in L^2(\Omega,\rset,\nu)$ solves the pde equation  (\ref{main:pde}) (without the initial/boundary condition) for some parameter $\theta$, say, then
\[{\cal J}(u) = \frac{1}{\lambda}\theta^\t\left(\un - \frac{1}{\lambda}\Sigma_u^{-1}\right)\theta\leq \frac{\|\theta\|_2^2}{\lambda}.\]
Hence with the size of $\theta$ remaining bounded, and for $\lambda$ large, functions that approximately solve the pde equation (\ref{main:pde}) satisfies ${\cal J}(u) \lesssim C/\lambda$ for some constant $C$. Theorem \ref{thm:prio:conc} thus implies that for $\lambda$ large, the PINN posterior distribution inherits the inductive bias of the prior and restricts the search of a solution for the nonparametric regression problem on approximate solutions of the pde (\ref{main:pde}). The rate of convergence of PINN, therefore, depends mainly on the complexity of the function class $\F_s$, as measured, for instance, by its covering number. The next assumption models the metric entropy of $\F_s$.

\medskip

\begin{assumption}\label{H:entropy}
Given $s\geq 2$, let $3\leq b<\infty$ be such that
\begin{equation}\label{boundedness}
\sup_{u\in\F_s}\;|u - u_\star|_\infty \leq b.
\end{equation}
There exists $V_1=V_1(s)$ and $V_2 = V_2(s)\geq 6b$ such that for all $0<\varepsilon\leq \tau_s$,
\begin{equation}\label{entropy}
\log\N(\varepsilon,\F_{s},\|\cdot\|_\infty)\leq V_1 \log\left(\frac{V_2}{\varepsilon}\right),
\end{equation}
where $\N(\varepsilon,\F_{s},\|\cdot\|_\infty)$ denotes the $\varepsilon$-covering number of $\F_{s}$ in the $L_\infty$ norm. \end{assumption}

\medskip

%

The following result establishes the rate of convergence of the PINN posterior and is proved in Section \ref{sec:proof:thm:margW}. We assume that the prior parameter $\lambda>0$ is taken such that
\begin{equation}\label{cond:lambda}
\|\theta_\star\|_2^2 \lambda \leq C_5 n\epsilon_0^{2(1-\kappa)}
\end{equation}
for some absolute constant $C_5$.  We also impose the following mild technical condition: there exists an absolute constant $C_6$ such that
\begin{equation}\label{tech:cond:1}
d\left(\log\left(n\|\Sigma_\star\|_{\textsf{op}}\right) + \|\theta_\star\|_\infty^2\right) \leq  \frac{C_6 n\epsilon_0^2}{2\sigma^2}.
\end{equation}

\medskip

\begin{theorem}\label{thm:margW}
Assume H\ref{H1}-H\ref{H:entropy}, (\ref{cond:lambda}), and (\ref{tech:cond:1}).  Then  we can  find absolute constants $C$ and $M\geq 1$ such that with
\begin{equation}
\label{choice:s_1}
 s = C s_0,\;\mbox{ and } \r \eqdef 2(b+\sigma) \sqrt{\frac{V_1\log(V_2\sqrt{n})}{n}},
 \end{equation}
where $V_1=V_1(s)$, $V_2=V_2(s)$ are as defined in H\ref{H:entropy}, it holds for all $n$ large enough,
\[\PE\left[\Pi\left(\left\{W\in\rset^q:\;|u_W-u_\star|_2 > M \r\right\}\;\vert\;\D\right)\right] \leq c \left(e^{-n\epsilon_0^2/(2\sigma^2)}  + e^{-V_1\log(V_2\sqrt{n})/C}\right),\]
where $c$ is a constant that depends only on $c_1,c_2$ and $\sigma^2$.
\end{theorem}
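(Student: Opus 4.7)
The plan is to run a Ghosal--Ghosh--van der Vaart (GGV) style posterior contraction argument on the marginal posterior \eqref{marginalW}, restricted to the sieve $\F_{s,C}$ supplied by Theorem \ref{thm:prio:conc}. Writing $A_n\eqdef\{W\in\rset^q:|u_W-u_\star|_2>M\r\}$, I split
\[\Pi(A_n\vert\D)\leq \Pi\bigl(A_n\cap\{u_W\in\F_{s,C}\}\vert\D\bigr)+\Pi\bigl(\{u_W\notin\F_{s,C}\}\vert\D\bigr),\]
so that the second summand is absorbed by Theorem \ref{thm:prio:conc} and delivers the $e^{-n\epsilon_0^2/(2\sigma^2)}$ term in the statement. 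For the first summand I write it as the ratio $N_n/Z_n$ with
\[Z_n\eqdef\int e^{\ell_n(u_W)-\ell_n(u_\star)}\widetilde\Pi(\rmd W),\qquad N_n\eqdef\int_{A_n\cap\{u_W\in\F_{s,C}\}} e^{\ell_n(u_W)-\ell_n(u_\star)}\widetilde\Pi(\rmd W),\]
where $\ell_n(u)\eqdef-\tfrac{1}{2\sigma^2}\sum_i(Y_i-u({\bf s}_i))^2$ and $\widetilde\Pi(\rmd W)\propto\det(\Sigma_W)^{-1/2}\exp(-\tfrac{\lambda}{2}{\cal J}(u_W))\Pi_0(\rmd W)$ is the marginal PINN prior behind \eqref{marginalW}. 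I then exhibit tests $\phi_n$ and a lower bound $Z_n\geq e^{-cn\r^2}$ on a high-probability event.

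The tests are built from the entropy assumption H\ref{H:entropy}. Cover $\F_{s,C}$ with $\|\cdot\|_\infty$-balls of radius $\r/2$; by \eqref{entropy} the log-cardinality is at most $V_1\log(2V_2/\r)\lesssim V_1\log(V_2\sqrt n)$. Residuals $Y_i-u_W({\bf s}_i)$ are sub-Gaussian with variance proxy $(b+\sigma)^2$ by \eqref{boundedness}, so at each cover center I form a standard squared-residual test whose Type I and Type II errors decay like $\exp(-cn\r^2/(b+\sigma)^2)$ via Bernstein's inequality for Gaussian quadratic forms. A union bound plus the triangle inequality yields a global test $\phi_n$ with
\[\PE[\phi_n]+\sup_{W\in\F_{s,C},\,|u_W-u_\star|_2>M\r}\PE_W[1-\phi_n]\leq 2e^{-V_1\log(V_2\sqrt n)/C},\]
provided $M$ is large enough to dominate the cover radius relative to $\r$, which is precisely the definition of $\r$ in \eqref{choice:s_1}.

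The denominator is lower bounded around $W_0$ from H\ref{H:approx}. First, using the general identity $\lambda{\cal J}(u)\leq \lambda|f-\H_0 u-\theta^\t\H_1 u|_2^2+\|\theta\|_2^2$ with $\theta=\theta_\star$ and H\ref{H:stability} applied to $|u_{W_0}-u_\star|_\infty\leq\epsilon_0$, we get $\lambda{\cal J}(u_{W_0})\lesssim\lambda(1+\|\theta_\star\|_2^2)\epsilon_0^{2\kappa}+\|\theta_\star\|_2^2\lesssim n\epsilon_0^2$ by \eqref{cond:lambda}. Second, a standard spike-and-slab mass estimate that uses \eqref{tech:cond:3} gives $\Pi_0(B)\geq e^{-Cs_0\mathsf{u}\log q}\geq e^{-cn\epsilon_0^2}$ for the ball $B\eqdef\{W:\|W-W_0\|_2\leq\epsilon_0/L\}$, and on $B$ the local Lipschitz bound H\ref{H:lip}-(1) enforces $|u_W-u_\star|_\infty\leq 2\epsilon_0$, so a Bernstein estimate yields $\ell_n(u_W)-\ell_n(u_\star)\geq -cn\epsilon_0^2$ uniformly on $B$ on an event of probability $\geq 1-e^{-cn\r^2}$. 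Third, the factor $\det(\Sigma_W)^{-1/2}$ remains $\geq e^{-cn\r^2}$ on $B$ thanks to \eqref{tech:cond:1}, the lower bound $\lambda_{\textsf{min}}(\Sigma_{W_0})\geq C_2$ from H\ref{H:approx}, and a perturbation estimate that controls $\Sigma_W-\Sigma_{W_0}$ in terms of $|u_W-u_{W_0}|_\infty$. Multiplying the three pieces gives $Z_n\geq e^{-cn\r^2}$ with the stated probability.

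Combining via the GGV identity $\PE[\Pi(A_n\cap\F_{s,C}\vert\D)]\leq \PE[\phi_n]+\PP(Z_n<e^{-cn\r^2})+e^{cn\r^2}\sup_W\PE_W[1-\phi_n]$ delivers the claimed bound once $M$ is taken sufficiently large that the test-error exponent dominates $cn\r^2$. The main obstacle is the denominator: unlike in classical Bayesian nonparametrics, the marginal PINN prior $\widetilde\Pi$ carries both the PDE penalty $\exp(-\tfrac{\lambda}{2}{\cal J}(u_W))$ and the factor $\det(\Sigma_W)^{-1/2}$, each of which could erase the standard sparsity-driven prior-mass lower bound near $W_0$. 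Conditions \eqref{cond:lambda} and \eqref{tech:cond:1} are precisely the calibrations of $\lambda$ and of the signal-to-noise scaling that keep both multiplicative factors at log-scale $\lesssim n\r^2$ on the relevant neighborhood of $W_0$, preserving the prior-mass lower bound needed to drive the GGV argument.
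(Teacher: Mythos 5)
Your proposal is correct in its overall architecture but follows a genuinely different route from the paper. You run the Ghosal--Ghosh--van der Vaart recipe: restrict to the sieve $\F_{s,C}$ via Theorem \ref{thm:prio:conc}, build tests from an $\|\cdot\|_\infty$-cover of the sieve with exponentially small errors, lower-bound the normalizing constant by prior mass near $W_0$, and combine via the three-term GGV inequality. The paper instead follows the Shen--Wasserman/van de Geer empirical-process route: it never constructs tests, but proves a uniform deviation bound for the centered process $\sqrt{n}(\PP_n-\PP)(\Delta_W-\Delta_\star)$ over shells $\{W\in\W(s):\r_j<|u_W-u_\star|_2\le\r_{j+1}\}$ (Lemma \ref{lem:dev:bound}, via Theorem 5.11 of van de Geer with bracketing entropy in the Bernstein norm, splitting the quadratic term $(u_W-u_\star)^2$ from the cross term $\xi(u_W-u_\star)$), and then peels over shells in Section \ref{sec:finish:proof:thm:margW}. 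The two approaches need the same three ingredients and you correctly identify all of them, including the PINN-specific obstacle that the marginal prior carries the factors $\det(\Sigma_W)^{-1/2}$ and $e^{-\lambda{\cal J}(u_W)/2}$, which is exactly what (\ref{cond:lambda}) and (\ref{tech:cond:3}) neutralize in the paper's Lemma \ref{lem:lb}. What the empirical-process route buys is that it works directly with the squared-error log-likelihood ratio, so the separation in $|\cdot|_2$ versus the empirical norm $\|\cdot\|_n$ is handled automatically by the $Z_{n,1}$ term; your testing route obtains the same effect but needs the tests to discriminate in the population $L^2(\nu)$ metric under random design, which is standard here only because $\sup_{u\in\F_s}|u-u_\star|_\infty\le b$ (so Hellinger separation of the joint laws of $(Y,{\bf s})$ is equivalent to $L^2(\nu)$ separation); your "squared-residual test via Bernstein" phrasing glosses over this but it is fixable.

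Two minor points to tighten. First, the factor $\det(\Sigma_W)^{-1/2}$ must also be bounded \emph{above} on the numerator set $A_n\cap\F_{s,C}$, not only controlled on the prior-mass neighborhood of $W_0$; this is where (\ref{tech:cond:1}) enters in the paper (the bound $-\tfrac12(\log\det\Sigma_W-\log\det\Sigma_\star)\le\tfrac{d}{2}\log(n\|\Sigma_\star\|_{\textsf{op}})$, using $\Sigma_W\succeq\lambda^{-1}\mathbf{I}$), and your write-up should say so explicitly. Second, your denominator bound $Z_n\ge e^{-cn\r^2}$ should really be stated as $Z_n\gtrsim e^{-Cn\epsilon_0^2/\sigma^2}$ as in Lemma \ref{lem:lb}; the two exponents are of the same order under the choice $s=Cs_0$ and H\ref{H:approx}, but the prior-mass calculation naturally produces $n\epsilon_0^2$, and the final bound in the theorem keeps the two exponents separate for exactly this reason.
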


\medskip

\subsection{Minimax nonparametric rate}
Bounding the metric entropy of the function class $\F_s$ in a way that leverages the pde information ${\cal J}(u_W) \leq C n\epsilon_0^2 /(\lambda\sigma^2)$ has proved challenging. However, we note that $\F_{s}\subseteq\{u_W\in\F:\; \|W\|_0\leq s,\;\|W\|_\infty\leq \tau_s\}$, and the metric entropy of this latter set is straightforward to bound using H\ref{H:lip}-(1). See also (\cite{shieber:20,lu:shen:etal:21}). Indeed, for all $W,W'$ in the ball $\{W\in\rset^q:\;\|W\|_0\leq s,\; \|W\|_\infty\leq \tau_s\}$, $\max(\|W\|_2,\|W'\|_2)\leq s^{1/2}\tau_s$. Hence by H\ref{H:lip}-(1), $|u_W - u_{W'}|_\infty \leq L s^{1/2} \|W - W'\|_\infty$, where $L = L_{{\bf 0},s^{1/2}\tau_s}$. The size of all the $(\varepsilon'=\varepsilon/L s^{1/2})$-cover of all the $\tau_s$-balls of all the $s$-sparse subspaces of $\rset^q$ in the infinity-norm is bounded from above by
\[{q\choose s} \left(1 + \frac{2r_s}{\varepsilon'}\right)^s.\]
It follows that 
\[\log \N(\varepsilon,\F_{s},\|\cdot\|_\infty) \leq s \log(q) + s\log\left(1 + \frac{2s^{1/2}\tau_s L}{\varepsilon}\right),\]
and H\ref{H:entropy} holds with
\[V_1(s) = s,\;\;\mbox{ and }\;\; V_2(s) = qr_s\left(1 + 2s^{1/2} L\right) \leq c\sqrt{\xi} q\left( 1 + s^2\log(q)\right)^{\xi+1/2},\]
for some constant $c$ that depends only on $\mathsf{u}$, and $\sup_{x\in\Omega}\|{\bf x}\|_2$, where $\xi$ denotes the depth of the neural network class. Hence, with $s= C s_0$,
\[V_1(s) = s = C s_0,\;\mbox{ and }\;\;\log(\sqrt{n}V_2(s)) \lesssim \log(n) + \xi\log(q),\]
where $\xi$ is the depth of the neural network. Furthermore, we have seen in Remark \ref{rem:H3} that if $u_\star$ is a classical solution and $u_\star\in\Cset^\beta(\Omega)$, then for piecewise polynomial activation at depth $\xi=O(\log(p))$, we have
\[s_0 \sim \left(\frac{n}{\log(q)}\right)^{\frac{m}{m + 2\beta}}.\]
As a result, up to log terms that we ignore, we obtain the estimation rate
\[\r \lesssim \sqrt{\frac{s_0}{n}} \lesssim \left(\frac{1}{n}\right)^{\frac{\beta}{m+2\beta}}.\]
Hence, Bayesian PINN estimates $u_\star$ at least at the nonparametric minimax rate $n^{-\beta/(m + 2\beta)}$, that is, at the optimal rate achievable when $u_\star$ is known only to be $\beta$-smooth (\cite{stone:82}) without any PDE constraint.  We note that our metric entropy calculations of the function class $\F_s$ are rather crude, as they ignore the inductive bias induced by the PINN prior. Thus, the PINN posterior distributions likely enjoy a faster convergence rate than the abovementioned nonparametric minimax rate. To understand this, consider the limiting case when $\lambda = +\infty$, i.e., the prior put all the mass on $\{f \in L^2(\Omega,\rset,\nu):\; {\cal J}(u)=0\}$, that is, the set of functions that are solutions of the pde equation $\H_0 u +\theta^\t\H_1 u =f$ for some $\theta\in\rset^d$. In the subsequent section, we show that estimating $u_\star$ at a parametric rate (up to a log factor) is possible in this limiting case.

\subsection{Estimation at the parametric rate}
\label{sec:est_param_rate}
We assume that $\theta_\star$ belongs to a compact set $\Theta\subset\rset^d$, and for each $\theta\in\Theta$ the pde equation (\ref{main:pde}) (including the initial/boundary condition) admits a unique solution  $u_\theta\in\Cset^\beta(\Omega)$ for some given and fixed $\beta$. Furthermore we assume  that the map $\Gamma:\;\Theta\to\Cset^\beta(\Omega)$ that maps $\theta\to u_\theta$ is Lipschitz. Specifically,
\begin{assumption}
\label{assm:gamma_smooth}
There exists a constant $L > 0$ such that: 
$$
|\Gamma(\theta_1) - \Gamma(\theta_2)|_\infty \le L \|\theta_1 - \theta_2\|_2 \ \ \forall \ \ \theta_1, \theta_2 \in \Theta \,.
$$
\end{assumption}

Let $\Gamma(\Theta)\eqdef \{\Gamma(\theta),\;\theta\in\Theta\}$. Under the data generating process assumed in H\ref{H1}, we consider the constrained empirical risk minimization estimator \begin{equation}
\label{eq:est}
\hat u = \argmin_{u \in \Gamma(\Theta)} \frac1n \sum_{i = 1}^n\left(Y_i - u({\bf s}_i)\right)^2 \,.
\end{equation}
The following theorem is proved in Section \ref{sec:proof:thm:main_param}.

\begin{theorem}
\label{thm:main_param}
Assume H\ref{H1}, and H\ref{assm:gamma_smooth}. Then any solution $\hat u$ of \eqref{eq:est} satisfies
$$
|\hat u - u_\star|_2^2 = O_p\left(\frac{d}{n}\log{\left(\frac{n}{d}\right)}\right) \,.
$$
\end{theorem}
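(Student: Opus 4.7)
The plan is to treat this as a standard constrained least-squares problem with a $d$-dimensional Lipschitz-parametrized function class, and use classical empirical process techniques with a localization (or peeling) argument. Since $\Gamma(\Theta)$ is a low-complexity class (finite-dimensional, compact, Lipschitz image), we expect a parametric-rate bound up to a logarithmic factor, matching what the theorem claims.

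The starting point is the basic inequality from empirical risk minimization. Since $u_\star = \Gamma(\theta_\star) \in \Gamma(\Theta)$, and $\hat u$ is a minimizer, writing $Y_i = u_\star({\bf s}_i) + \epsilon_i$ with $\epsilon_i \sim \mathbf{N}(0,\sigma^2)$ i.i.d., we obtain
\begin{equation*}
\frac{1}{n}\sum_{i=1}^n \bigl(\hat u({\bf s}_i) - u_\star({\bf s}_i)\bigr)^2 \;\leq\; \frac{2}{n}\sum_{i=1}^n \epsilon_i \bigl(\hat u({\bf s}_i) - u_\star({\bf s}_i)\bigr).
\end{equation*}
Next, I would build a covering of $\Gamma(\Theta)$ in the uniform norm: since $\Theta \subset \rset^d$ is compact (diameter $D$, say) and $\Gamma$ is $L$-Lipschitz from $(\Theta,\|\cdot\|_2)$ to $(\Gamma(\Theta),|\cdot|_\infty)$ by H\ref{assm:gamma_smooth}, standard volume arguments give
\begin{equation*}
\log \N\bigl(\varepsilon,\Gamma(\Theta),|\cdot|_\infty\bigr) \;\leq\; d\log\!\left(\tfrac{3LD}{\varepsilon}\right).
\end{equation*}
This $d\log(1/\varepsilon)$ entropy bound is the key structural input.

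The heart of the argument is controlling the Gaussian noise term on the right-hand side of the basic inequality, uniformly over $\Gamma(\Theta)$. I would apply a localized chaining (or peeling) bound for sub-Gaussian processes: with probability at least $1 - \exp(-c d\log(n/d))$,
\begin{equation*}
\sup_{u\in\Gamma(\Theta)}\;\frac{\bigl|\tfrac{1}{n}\sum_i \epsilon_i\bigl(u({\bf s}_i) - u_\star({\bf s}_i)\bigr)\bigr|}{|u - u_\star|_{2,n}\vee \delta_n}\;\lesssim\;\delta_n,\qquad \delta_n \;\eqdef\; \sigma\sqrt{\tfrac{d}{n}\log\!\tfrac{n}{d}},
\end{equation*}
where $|\cdot|_{2,n}$ is the empirical $L^2$ norm. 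This uses only the entropy bound above (its Dudley integral is $\lesssim \delta_n$) together with sub-Gaussian tail control, and $|f|_\infty$-boundedness of $\Gamma(\Theta)$ which follows from compactness of $\Theta$ and Lipschitzness. A parallel uniform Bernstein-type inequality, again exploiting the same entropy bound, transfers the empirical to the population norm: outside a set of small probability, $|u - u_\star|_{2,n}^2 \geq \tfrac{1}{2} |u - u_\star|_2^2$ whenever $|u - u_\star|_2 \gtrsim \delta_n$.

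Combining these three pieces, the basic inequality becomes
\begin{equation*}
|\hat u - u_\star|_2^2 \;\lesssim\; |\hat u - u_\star|_{2,n}^2 \;\lesssim\; \delta_n\bigl(|\hat u - u_\star|_{2,n}\vee \delta_n\bigr)\;\lesssim\; \delta_n\bigl(|\hat u - u_\star|_2 \vee \delta_n\bigr),
\end{equation*}
from which $|\hat u - u_\star|_2 \lesssim \delta_n$ follows, giving the stated $O_p\bigl(\tfrac{d}{n}\log(n/d)\bigr)$ rate. The main obstacle I anticipate is the empirical-to-population norm transfer under random design: the class $\Gamma(\Theta)$ need not contain only smooth or bounded functions in a quantitative way beyond the Lipschitz assumption, so one needs to be careful that $\Gamma(\Theta)$ is $L^\infty$-bounded (which follows from compactness of $\Theta$) and to handle the localization properly through a standard peeling over dyadic shells of $|u - u_\star|_2$. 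Everything else reduces to routine applications of Dudley's entropy integral and sub-Gaussian concentration.
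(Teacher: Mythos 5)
Your proposal is correct and follows essentially the same route as the paper: both arguments rest on the covering bound $\log\N(\varepsilon,\Gamma(\Theta),|\cdot|_\infty)\lesssim d\log(1/\varepsilon)$ inherited from the compactness of $\Theta$ and the Lipschitz continuity of $\Gamma$, followed by a localization of the same two empirical processes (the quadratic term and the Gaussian cross term) via chaining. The only difference is packaging — the paper invokes the van der Vaart--Wellner rate theorem with a modulus of continuity $\phi_n(\delta)$ obtained by symmetrization, Ledoux--Talagrand contraction and Dudley's bound, whereas you run the equivalent basic-inequality-plus-peeling argument with an explicit empirical-to-population norm transfer; both yield $\delta_n \asymp \sqrt{(d/n)\log(n/d)}$.
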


\medskip

It is immediate from the above theorem that $|\hat u - u_\star|_2 = O_p(\sqrt{\frac{d}{n}})$ (up to a log factor) which is the standard parametric rate for estimating a $d$ dimensional parameter. The estimator $\hat u$ is typically not computable since it requires solving the pde for each $\theta\in\Theta$. But the rate in Theorem \ref{thm:main_param}, in light of Theorem \ref{thm:prio:conc}, suggests that the PINN posterior distribution may actually have a convergence rate faster than the nonparametric minimax rate. More research is needed on this issue.

Regarding Assumption \ref{assm:gamma_smooth}, one may replace the $L_2$ norm in the assumption by any $L_p$ norm without hurting the rate, but to translate the complexity of $\Theta$ to $\Gamma(\Theta)$, we believe that some sort of smoothness assumption is necessary. Whether one can do entirely without Assumption \ref{assm:gamma_smooth} is an intriguing question that we leave for future research. 

\subsection{Checking Assumption H\ref{H:lip}-(2)}
We end this section with a result showing that sufficiently smooth functions typically satisfy H\ref{H:lip}-(2). The proof is given in Section \ref{sec:proof:thm:approx:deriv}. We recall that $\Omega$ is a bounded open subset of $\rset^m$, and we write $\mathbf{B}(x,\alpha)$ to denote the Euclidean ball of $\rset^m$ centered at $x$ and with radius $\alpha$. We define $\textsf{int}_\alpha(\Omega)\eqdef\{x\in\Omega:\; \mathbf{B}(x,\alpha)\subset\Omega\}$. We recall also that $\Cset^\beta(\Omega,b)$ denotes the ball with radius $b$ in the Holder space $\Cset^\beta(\Omega)$.

\begin{theorem}\label{thm:approx:deriv}
Given $\varepsilon>0$, $\beta>1$, integer $\tau\in[0,\beta)$,  and $M>0$, define 
\[\bar\alpha =\left(\frac{\tau\varepsilon \lfloor \beta \rfloor!}{2M(\beta-\tau)}\right)^{1/\beta}.\] 
For all $u,\tilde u \in \Cset^{\beta}(\Omega ,M)$ such that $\sup_{x\in\Omega}\;|u(x) - \tilde u(x)|_\infty \leq \varepsilon$, and for all multi-index ${\bf k}=(k_1,\ldots,k_m)$, with $|{\bf k}| \leq \tau$, we have
\[\sup_{x\in\mathsf{int}_{\bar\alpha}(\Omega)}\;\left|D^{\bf k} u(x) - D^{\bf k} \tilde u(x) \right|_\infty \leq C M^{\frac{\tau}{\beta}} \varepsilon^{\frac{\beta-\tau}{\beta}},\]
for some constant $C$ that depends only on $\tau$ and $\beta$.
\end{theorem}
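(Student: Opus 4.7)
The plan is to establish a Landau-Kolmogorov type interpolation inequality. Setting $h \eqdef u - \tilde u$, the triangle inequality gives $|h|_\infty\le\varepsilon$ and $|h|_{\Cset^\beta}\le 2M$, and the conclusion reduces to showing
\[\sup_{x_0\in\mathsf{int}_{\bar\alpha}(\Omega)}|D^{\bf k} h(x_0)| \le CM^{\tau/\beta}\varepsilon^{(\beta-\tau)/\beta}\]
for every multi-index $\bf k$ with $|{\bf k}|\le\tau$. The whole argument is pointwise: fix $x_0\in\mathsf{int}_{\bar\alpha}(\Omega)$, which by construction satisfies $\mathbf{B}(x_0,\bar\alpha)\subset\Omega$, so that $h$ may be evaluated on the full ball of radius $\bar\alpha$ around $x_0$.

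The central tool is Taylor's theorem with H\"older-type remainder applied to $h$ at $x_0$ and expanded to order $\lfloor\beta\rfloor$. For $\|y\|_2\le\bar\alpha$ this produces $h(x_0+y)=P(y)+R(y)$, with Taylor polynomial
\[P(y)\eqdef \sum_{|{\bf j}|\le\lfloor\beta\rfloor}\frac{D^{\bf j} h(x_0)}{{\bf j}!}\,y^{\bf j},\]
and with a remainder satisfying $|R(y)|\le c_1 M\|y\|_2^\beta/\lfloor\beta\rfloor!$, where $c_1$ depends on $\beta$ (through a standard multinomial factor). Combined with $|h(x_0+y)|\le\varepsilon$ this gives $|P(y)|\le \varepsilon + c_1 M\|y\|_2^\beta/\lfloor\beta\rfloor!$ on $\mathbf{B}(0,\bar\alpha)$. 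The specific form of $\bar\alpha$ is engineered precisely so that $c_1 M\bar\alpha^\beta/\lfloor\beta\rfloor!$ equals $\tau\varepsilon/(\beta-\tau)$, delivering the clean uniform estimate $|P(y)|\le (\beta/(\beta-\tau))\varepsilon$ on $\mathbf{B}(0,\bar\alpha)$.

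The last step is to pass from this uniform bound on the polynomial $P$ to a bound on its individual coefficient ${D^{\bf k} h(x_0)}/{{\bf k}!}$. This is a classical multivariate Markov-Bernstein inequality: a polynomial of total degree at most $\lfloor\beta\rfloor$ in $m$ variables that is bounded by $A$ on $\mathbf{B}(0,r)$ has every coefficient of $y^{\bf k}$ bounded in magnitude by a constant times $A/r^{|{\bf k}|}$. Applying it with $A=O(\varepsilon)$ and $r=\bar\alpha$ yields $|D^{\bf k} h(x_0)|\le C\varepsilon/\bar\alpha^{|{\bf k}|}$, and substituting $\bar\alpha^{|{\bf k}|}\propto (\varepsilon/M)^{|{\bf k}|/\beta}$ produces $|D^{\bf k} h(x_0)|\le C M^{|{\bf k}|/\beta}\varepsilon^{1-|{\bf k}|/\beta}$. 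In the interesting regime $\varepsilon\le M$, this right-hand side is monotone in $|{\bf k}|$, and its maximum over $|{\bf k}|\le\tau$ is attained at $|{\bf k}|=\tau$, giving the stated bound (the case $\varepsilon>M$ is trivial since then $|D^{\bf k} h|_\infty\le |h|_{\Cset^\beta}\le 2M\le 2M^{\tau/\beta}\varepsilon^{(\beta-\tau)/\beta}$).

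The most delicate step is the coefficient-extraction via a Markov-Bernstein inequality. The univariate version, obtainable for example through Chebyshev nodes, is classical, but the $m$-dimensional generalization typically carries a dimension-dependent constant. To obtain a constant depending only on $\tau$ and $\beta$ as the statement asserts, one can instead iterate the one-dimensional inequality: first extract each pure partial derivative $\partial_i^{k_i} h(x_0)$ from the 1D restriction $t\mapsto h(x_0+te_i)$, and then reduce the mixed partial $D^{\bf k} h = \partial_1^{k_1}\cdots\partial_m^{k_m}h$ to pure ones by recursively applying the 1D interpolation bound to lower-order derivatives of $h$, which themselves lie in appropriate H\"older classes of reduced order. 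Careful bookkeeping of the H\"older norms and the shrinkage of the interior region through this iteration is where the proof demands the most attention.
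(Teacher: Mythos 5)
Your proposal is correct and follows essentially the same route as the paper: Taylor-expand the difference to order $\lfloor \beta \rfloor$ around a point of $\mathsf{int}_{\bar\alpha}(\Omega)$, bound the resulting Taylor polynomial uniformly on the ball of radius $\bar\alpha$ by $\varepsilon + O(M\bar\alpha^{\beta})$, extract the order-$\tau$ term via a Markov-type polynomial inequality, and choose $\bar\alpha$ to balance $\varepsilon/\bar\alpha^{\tau}$ against $M\bar\alpha^{\beta-\tau}$. The one step where you hesitate --- dimension-independence of the multivariate Markov constant --- is precisely what the paper's key lemma (Harris's Banach-space generalization of Markov's inequality, with constant $T^{(\tau)}_{\lfloor \beta \rfloor}(1)$) supplies; it can be obtained without your iteration scheme by restricting the polynomial to lines through the origin, applying the univariate Markov inequality there, and polarizing the resulting symmetric $\tau$-linear form to recover the mixed partials.
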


\begin{remark}
We note that $\bar\alpha\downarrow 0$ as $M\uparrow\infty$. Hence if $u,\tilde u \in \Cset^{\beta}(\bar\Omega ,M_0)$ for some well-chosen $\bar\Omega\supset\Omega$, and $\sup_{x\in\bar\Omega}\;|u(x) - \tilde u(x)|_\infty \leq \varepsilon$  then by taking $M\geq M_0$ large enough such that  $\Omega\subset \mathsf{int}_{\bar\alpha}(\bar\Omega)$, we get 
\[\sup_{x\in\Omega}\;\left|D^{\bf k} u(x) - D^{\bf k} \tilde u(x) \right|_\infty \leq C M^{\frac{\tau}{\beta}} \varepsilon^{\frac{\beta-\tau}{\beta}}.\]
\end{remark}

\section{A Bernstein-von-Mises theorem for the $\theta$-marginal}\label{sec:bvm}
Given $\mu\in\rset^d$, and $\Lambda\in\rset^{d\times d}$ symmetric and positive definite, we write $\textbf{N}(\mu,\Lambda)(\cdot)$ as the probability measure of the Gaussian distribution $\textbf{N}(\mu,\Lambda)$. Given $W\in\rset^q$, let 
\begin{equation}\label{eq:conditionW:dist}
\hat\theta_W\eqdef \Sigma_W^{-1}\Phi_W.
\end{equation}
It is then easy to see from (\ref{post:joint:ideal}) that the conditional distribution of $\theta$ given $W$ is $\mathbf{N}(\hat\theta_W,\Sigma_W^{-1}/\lambda)$. For clarity's sake we will write $\Pi^{(\theta)}$ (resp. $\Pi^{(W)}$) to denote the marginal distribution of $\theta$ (resp. $W$) under (\ref{post:joint:ideal}). We have
\[\Pi^{(\theta)}(\cdot\vert\D) =\int_{\rset^q}\left[\mathbf{N}\left(\hat\theta_W,\frac{1}{\lambda}\Sigma_W^{-1}\right)(\cdot)\right]\Pi^{(W)}(\rmd W\vert \D).\]
We set
\begin{equation}\label{eq:target:dist}
\Pi^{(\theta)}_\star(\cdot) \eqdef \mathbf{N}\left(\theta_\star,\frac{1}{\lambda}\Sigma_\star^{-1}\right)(\cdot).
\end{equation}

We investigate the proximity between $\Pi^{(\theta)}$ and  $\Pi_\star^{(\theta)}$ for $n$ large,  using the 2-Wasserstein metric that we denote $\mathsf{W}_2$.  The following result is established in Section \ref{sec:proof:thm:marg:theta}.

\begin{theorem}\label{thm:marg:theta}
Assume H\ref{H1}-H\ref{H:entropy} and the notations of Theorem \ref{thm:margW}. Then, for all $n$ large enough, we have
\[\PE\left[\mathsf{W}_2^2(\Pi^{(\theta)}, \Pi^{(\theta)}_\star)\right] \leq c\left(\r^{2\kappa} + \lambda e^{-n\epsilon_0^2/(2\sigma^2)}  + \lambda e^{-V_1\log(V_2\sqrt{n})/C}\right),\]
for some constant $c$. In particular,
\[\PE\left[\left|\int_{\rset^d} u \Pi^{(\theta)}(\rmd u\vert\D) -\theta_\star \right|^2\right] \leq c\left(\r^{2\kappa} + \lambda e^{-n\epsilon_0^2/(2\sigma^2)}  + \lambda e^{-V_1\log(V_2\sqrt{n})/C}\right).\]
\end{theorem}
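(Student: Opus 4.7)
The plan is to exploit the mixture representation of the posterior marginal. Since the conditional law of $\theta$ given $W$ under the joint posterior is $\mathbf{N}(\hat\theta_W,\lambda^{-1}\Sigma_W^{-1})$ by \eqref{eq:conditionW:dist}, while the target $\Pi^{(\theta)}_\star$ is a single Gaussian independent of $W$, joint convexity of $\mathsf{W}_2^2$ with a common mixing measure yields
\[
\mathsf{W}_2^2(\Pi^{(\theta)},\Pi^{(\theta)}_\star) \le \int_{\rset^q} \mathsf{W}_2^2\!\left(\mathbf{N}(\hat\theta_W,\lambda^{-1}\Sigma_W^{-1}),\,\mathbf{N}(\theta_\star,\lambda^{-1}\Sigma_\star^{-1})\right) \Pi^{(W)}(\rmd W\vert\D).
\]
Invoking the closed-form Wasserstein-2 distance between Gaussians, the integrand splits into a mean piece $\|\hat\theta_W-\theta_\star\|_2^2$ and a Bures covariance piece that I would dominate by $\lambda^{-1}d\,\|\Sigma_W^{-1}-\Sigma_\star^{-1}\|_{\textsf{op}}^2$, using the lower bounds on $\lambda_{\textsf{min}}(\Sigma_W),\lambda_{\textsf{min}}(\Sigma_\star)$ from H\ref{H:approx}.

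The next step is to split the outer integral into the good set $\F_{s,C}$ and its complement. By Theorem \ref{thm:margW}, the posterior mass of the complement is at most $c(e^{-n\epsilon_0^2/(2\sigma^2)}+e^{-V_1\log(V_2\sqrt n)/C})$ in expectation, and on the good set $|u_W-u_\star|_2\le M\r$. H\ref{H:stability} then converts this into $|\H u_W-\H u_\star|_2\le C_4(M\r)^\kappa$. Cauchy--Schwarz expansions of $(\Sigma_W-\Sigma_\star)_{j,k}$ and $(\Phi_W-\Phi_\star)_j$, together with the lower-eigenvalue bounds and the uniform boundedness of $|\H_1 u_W|_2$ on $\F_{s,C}$, yield $\max(\|\Sigma_W-\Sigma_\star\|_{\textsf{op}},\,\|\Phi_W-\Phi_\star\|_2)\le C\r^\kappa$. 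The key identity unlocking the mean piece is that, since $u_\star$ solves the PDE with parameter $\theta_\star$, $\bar f_\star=\theta_\star^\t \H_1 u_\star$, so $\Phi_\star=(\Sigma_\star-\lambda^{-1}I)\theta_\star$ and thus $\hat\theta_\star=\theta_\star-\lambda^{-1}\Sigma_\star^{-1}\theta_\star$. Writing
\[
\hat\theta_W-\theta_\star=\Sigma_W^{-1}(\Phi_W-\Phi_\star)+(\Sigma_W^{-1}-\Sigma_\star^{-1})\Phi_\star-\lambda^{-1}\Sigma_\star^{-1}\theta_\star,
\]
and using (\ref{cond:lambda}) to absorb the last term into $\r^{2\kappa}$, one obtains $\|\hat\theta_W-\theta_\star\|_2^2\lesssim \r^{2\kappa}$, while the covariance part contributes only $O(\lambda^{-1}d\,\r^{2\kappa})$ and is subleading.

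On the complement of the good set I would employ the crude bound $\mathsf{W}_2^2\le 2(\|\hat\theta_W\|_2^2+\|\theta_\star\|_2^2)+2\lambda^{-1}(\mathrm{tr}(\Sigma_W^{-1})+\mathrm{tr}(\Sigma_\star^{-1}))$, combined with $\mathrm{tr}(\Sigma_W^{-1})\le \lambda d$ (since $\Sigma_W\succeq \lambda^{-1}I$ by construction) and the posterior-average identity $\int\|\hat\theta_W\|_2^2\,\Pi^{(W)}(\rmd W\vert\D)\le \int\|\theta\|_2^2\,\Pi(\rmd\theta,\rmd W\vert\D)$, the latter controlled in $\PE$ via the Gaussian prior on $\theta$ and the PINN penalty $\mathcal{J}(u_W)$. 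This produces a worst-case per-$W$ contribution of order $\lambda$, which multiplied by the tail probability from Theorem \ref{thm:margW} yields the $\lambda e^{-n\epsilon_0^2/(2\sigma^2)}$ and $\lambda e^{-V_1\log(V_2\sqrt n)/C}$ terms. The second assertion follows from the elementary inequality $\|m_1-m_2\|_2\le\mathsf{W}_2(\mu_1,\mu_2)$ between the means of two probability measures, applied with $\mu_1=\Pi^{(\theta)}(\cdot\vert\D)$ and $\mu_2=\Pi^{(\theta)}_\star$, then squaring and taking $\PE$.

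The main obstacle I anticipate is the bad-set contribution: one must couple the potentially large fluctuations of $\hat\theta_W=\Sigma_W^{-1}\Phi_W$ (which can, in principle, be of size $\lambda$ because $\Sigma_W^{-1}\preceq \lambda I$) with the exponentially small posterior mass of $\F_{s,C}^c$ while retaining only a single factor of $\lambda$, not $\lambda^2$, in the final rate. This requires a careful second-moment calculation exploiting the PINN inductive bias encoded in $\mathcal{J}(u_W)$; the good-set analysis, by contrast, is mostly a stability chain and elementary linear algebra once the stated framework is in place.
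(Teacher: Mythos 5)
Your proposal follows essentially the same route as the paper's proof: the convexity bound over the mixing measure $\Pi^{(W)}$, the closed-form Gaussian $\mathsf{W}_2$ estimate (the paper controls the covariance piece via Theorem 1 of Bhatia et al.\ and the Hemmen--Ando inequality), the split into $\{|u_W-u_\star|_2\le M\r\}$ and its complement using Theorem \ref{thm:margW}, and the fact that means are $1$-Lipschitz under $\mathsf{W}_2$ for the second assertion.

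Two implementation details differ. On the good set the paper does not perturb the linear system $\Sigma_W\hat\theta_W=\Phi_W$; instead it uses the variational characterization of $\hat\theta_W$ as the minimizer of $\theta\mapsto|f-\H_0 u_W-\theta^\t\H_1 u_W|_2^2+\|\theta\|_2^2/\lambda$ to bound $|(\hat\theta_W-\theta_\star)^\t\H_1 u_W|_2^2$ by $c|\H u_W-\H u_\star|_2^2+\|\theta_\star\|_2^2/\lambda$, and then converts back to $\|\hat\theta_W-\theta_\star\|_2^2$ via $\lambda_{\textsf{min}}(\Sigma_W)\ge C_2/2$; your perturbation route through the identity $\Phi_\star=(\Sigma_\star-\lambda^{-1}I)\theta_\star$ is equally valid and reaches the same $\r^{2\kappa}$. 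On the bad set, the obstacle you flag is resolved in the paper by a pointwise, per-$W$ bound rather than by a posterior average or a second-moment computation: $\|\hat\theta_W\|_2^2=\hat\theta_W^\t\Sigma_W^{1/2}\Sigma_W^{-1}\Sigma_W^{1/2}\hat\theta_W\le\lambda\,\hat\theta_W^\t\Sigma_W\hat\theta_W=\lambda\,\Phi_W^\t\Sigma_W^{-1}\Phi_W\le\lambda|\bar f_W|_2^2\le 2\lambda\left(|f|_2^2+|\H_0 u_W|_2^2\right)$, which yields exactly one factor of $\lambda$ times a quantity uniformly bounded on the sparsity set that carries all but exponentially small posterior mass. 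Your stated posterior-average identity cannot be multiplied by the tail probability $\Pi^{(W)}(\{|u_W-u_\star|_2>M\r\}\vert\D)$ (an average over all of $\rset^q$ does not localize to the bad set), so you should replace that step by the pointwise estimate above; with that substitution your argument matches the paper's.
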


\medskip

For DL functions with sufficiently smooth piecewise polynomial activation functions, and for $u_\star\in \Cset^\beta(\Omega)$,  we have seen above that $\r\lesssim n^{-\beta/(m+2\beta)}$, and by Theorem \ref{thm:approx:deriv}, $\kappa =(\beta-\tau)/\beta$. With these we conclude from Theorem \ref{thm:marg:theta} that the estimation rate of $\theta_\star$ afforded by B-PINN is at least $n^{-2(\beta-\tau)/(m+2\beta)}$.  This estimation rate corresponds to the optimal plug-in rate. Recall that our pde equation is linear in $\theta$. Therefore, the bottleneck in estimating $\theta$ is the difficulty of estimating $u$ and its derivatives up to order $\tau$. As shown above, for a function $u\in\Cset^\beta(\Omega)$, PINN estimates these derivatives at the nonparametric minimax rate $n^{-2(\beta-\tau)/(m+2\beta)}$. Hence, the obtained estimation rate for $\theta_\star$. As conjectured above, PINN likely estimates $u_\star$ at a rate faster than the nonparametric minimax rate. If true, this would translate to a faster estimation rate for $\theta_\star$. More research is needed on this point.

\section{Numerical illustration}\label{sec:illust}
We illustrate the results above using the one-dimensional heat equation of Example \ref{ex:1}. Extensive illustrations of PINN and Bayesian PINN can be found in the literature (\cite{raissi:etal:19,yang:bpinn:21,cuomo:etal:22}). We focus here on comparing PINN and an approach that does not directly exploit the PDE structure of the problem. We consider the heat equation with $L=\pi$, and $T=1$.  Hence $\Omega= (0,1)\times (0,\pi)$. We use the boundary condition $u(t, 0) = u(t, \pi) = 0$, and the initial condition $u(0, x) = \sin ( x)$. Given $\theta$ the heat equation has a unique solution $u_\theta(t,x) =  \sin (x) \exp(-\theta t)$. We set the true value of $\theta$ to $\theta_\star=0.5$.  
The observed data $\D$ is generated as follows: $n$ sensor locations $\{{\bf s}_i=(t_i,x_i),\;1\leq i\leq n\}$ are evenly distributed in $\Omega$, yielding measurements 
\[Y_i = u_{\theta_\star}({\bf s}_i) + \sigma \epsilon_i, \mbox{ where }\; \epsilon_i \sim \mathbf{N}(0,1),\]
for some noise parameter $\sigma$ that we control. For the function class $\F=\{u_W\}$, $u_W:\;\rset^2\to\rset$ is taken to be a fully connected neural network with depth $4$, width $64$, and $\texttt{tanh}$ activation function. The resulting posterior distribution is
\[\Pi(\rmd\theta,\rmd W\vert\D)\propto \exp\left(-\frac{1}{2\sigma^2}\sum_{i=1}^n\left(Y_i - u_W({\bf s}_i)\right)^2 -\frac{\lambda}{2}\ell_0(\theta,W) -\frac{1}{2}\|\theta\|_2^2\right)\Pi_0(\rmd W)\rmd\theta,\]
where
\begin{equation}\label{loss:2}
\ell_0(\theta,W) =  |(u_W)_t -  \theta (u_W)_{xx}|_2^2 +|u_W(\cdot,0)|_2^2+|u_W(\cdot,\pi)|_2^2+|u_W(0,\cdot) - \sin(\cdot)|_2^2,
\end{equation}
where $|\cdot|_2$ denotes the function space $L^2$-norm. Observe that $\lambda=0$ in the above posterior corresponds to the estimation of $W$ without the physics-informed prior (non-PINN estimation). Before presenting the numerical results, we describe briefly our MCMC sampling method.

\subsection{Approximation and MCMC sampling}
In general, the $L^2$-norm in the loss (\ref{loss:2}) is intractable and is typically replaced by a Monte Carlo or a numerical quadrature approximation. Here we use Monte Carlo by drawing $N = 10,000$ interior points $({\bf s}_1^{(i)},\ldots,{\bf s}^{(i)}_N)$ uniformly in $\Omega$ to estimate the $L^2$ norm on $\Omega$. For the $L^2$ norm along the boundaries we draw two sets of $B=128$ points $({\bf s}_1^{(b_i)},\ldots,{\bf s}^{(b_i)}_B)$ for $i=1,2$ uniformly on the time and space boundaries respectively. Hence, we consider  the approximate posterior distribution
\begin{equation}\label{post:joint:hat}
\widehat{\Pi}(\theta,W\vert\D) \propto \Pi_0(W)\\\exp\left(-\frac{1}{2\sigma^2}\sum_{i=1}^n\left(y_i - u_W({\bf s}_i)\right)^2 - \frac{\lambda}{2}\widehat{\ell}_0(\theta,W)- \frac{\rho}{2}\|\theta\|_2^2\right).
\end{equation}
where the loss (\ref{loss:2}) is replaced by
\begin{multline}\label{loss:3}
\widehat{\ell}_0(\theta,W) =  \frac{1}{N}\sum_{k=1}^N \left((u_W)_t({\bf s}_k^{(i)}) -  \theta\times (u_W)_{xx}({\bf s}_k^{(i)})\right)^2 \\
+\frac{1}{B}\sum_{k=1}^{B}\left(u_W({\bf s}^{(b_1)}_k,0)\right)^2 + \frac{1}{B}\sum_{k=1}^{B}\left(u_W({\bf s}^{(b_1)}_k,\pi)\right)^2 +\frac{1}{B}\sum_{k=1}^B\left(u_W(0,{\bf s}_k^{(b_2)}) - \sin({\bf s}_k^{(b_2)})\right)^2.
\end{multline}

Given $W$, and as seen in Section \ref{sec:bvm}, the posterior conditional distribution of $\theta$ given $W$ is 
\begin{equation}\label{cond:theta}
\theta \vert W, \mathcal{D} \sim \mathbf{N}\left(\Sigma_W^{-1}\Phi_W  ,\frac{1}{\lambda}\Sigma_W^{-1}\right)
\end{equation}
where in the particular case of this example 
$$
\Sigma_W = \frac{1}{N} \sum_{k=1}^N (u_W)_{xx}(\mathbf{s}_k^{(i)})^2 + \frac{\rho}{\lambda}, \;\; \mbox{ and } \;\;  \Phi_W = \frac{1}{N}\sum_{k=1}^N (u_W)_t(\mathbf{s}_k^{(i)}) \times (u_W)_{xx}(\mathbf{s}_k^{(i)})
$$

We sample from (\ref{post:joint:hat}) using the approximate asynchronous sampler of \cite{atchade:wang:23}. The algorithm is a data-augmentation Metropolis-with-Gibbs sampler where the update of the sparsity support $\Lambda$ given $\theta,W$ uses asynchronous sampling, and the update of $W$ given $\theta,\Lambda$ is a sparse stochastic gradient Langevin dynamics. Then  $\theta$ given $W,\Lambda$ is sample from Gaussian $ \mathbf{N}\left(\Sigma_{W_\Lambda}^{-1}\Phi_{W_\Lambda}  ,\frac{1}{\lambda}\Sigma_{W_\Lambda}^{-1}\right)$ using (\ref{cond:theta}) where $W_\Lambda$ is the component-wise product of $W$ and $\Lambda$ (a sparse neural network weight).  A \texttt{Pytorch} implementation is available from the GitHub page \texttt{https://github.com/xliu-522/SA-cSGLD}.

Throughout the experiment, we use $\theta^* = 0.5$, $\lambda = n$ and $\rho = 1$. For each instance of the posterior distribution (\ref{post:joint:hat}) under consideration, we run the aforementioned MCMC sampler until convergence and keep running another $200,000$ iterations. We then record every 20th sample, resulting in a total of $K = 10,000$ samples, denoted by  $\{\hat\theta^{(k)} \}_{k=1}^K$ . The mean and standard deviations of the marginal posterior  distribution of $\theta$ are then approximated respectively by  
\[\mu_{\hat\theta} \eqdef \frac{1}{K} \sum_{k=1}^{K} \hat{\theta}^{(k)},\;\mbox{ and }\;\; \sigma_{\hat\theta} \eqdef \sqrt{\frac{1}{K} \sum_{k=1}^{K} \left(\hat{\theta}^{(k)} - \mu_{\hat\theta} \right)^2}.\]

In this section, assuming that the MCMC sampler has converged, we take the distribution of the samples $\{\hat\theta^{(k)} \}_{k=1}^K$ to be $\Pi^{(\theta)}$ and we denote its normal approximation by $\tilde{\Pi}^{(\theta)} \eqdef \mathbf{N}(\mu_{\hat\theta}, \sigma^2_{\hat\theta})$.
Since the exact solution $u_\star(t,x) = \sin (x) \exp(-0.5 t)$  is known, we can compute the variance of the limiting Gaussian distribution in Bernstein-von Mises theorem (Theorem \ref{thm:marg:theta})
$$
\Sigma_* = \int_0^1 \int_0^\pi \left((u_\star)_{xx}(t, x) \right)^2 dx dt + \frac{1}{n} = \frac{\pi (1 - \frac{1}{e})}{2} + \frac{1}{n} \approx 0.993 + \frac{1}{n}
$$
Hence the limiting distribution in Theorem \ref{thm:marg:theta} denoted $\Pi_\star^{(\theta)}$ is 
\begin{equation}\label{heat:target:normal}
	\Pi_\star^{(\theta)} \eqdef \mathbf{N}(\theta_\star, \frac{1}{n}  \Sigma_\star^{-1}).
\end{equation}

\subsection{PINN versus non-PINN comparison}
In this section, we compare  PINN to a two-step approach (denoted by non-PINN) that does not directly use the PDE structure of $u$. Specifically, in the non-PINN approach, we fit the same DL model for estimating $u$ but without the PINN prior. The resulting posterior is given by 
$$
\widehat{\Pi}( W \vert\D) \propto \Pi_0(W)\exp\left(-\frac{1}{2\sigma^2}\sum_{i=1}^n\left(y_i - u_W({\bf s}_i)\right)^2  \right).
$$
We sample from this posterior distribution using the same MCMC algorithm described above. For each MCMC draw $W$, we then subsequently solve for $\theta$ by sampling from the Gaussian distribution $\mathbf{N}\left(\Sigma_W^{-1}\Phi_W, \frac{1}{n}\Sigma_W^{-1}\right)$ from (\ref{cond:theta}). Hence, in the two-step approach, the PDE information is not used in the first step for estimating $u$, but is used in the second step to recover $\theta$ by the linear regression model derived from (\ref{main:pde}). By contrast, PINN uses the PDE information and estimates $(u,\theta)$ jointly. We compare the two approaches under varying sample sizes and noise levels, as it allows us to observe the contribution of the PINN prior $\ell_0(\theta,W)$. 

Figure \ref{sec:heat:fig:1} provides a comparative analysis of the posterior distribution of $\theta$ in the two approaches (PINN and non-PINN) at a noise level of 10\% ($\sigma \approx 0.025$), across different sample sizes. Both methods exhibit improved parameter estimates as the sample size $n$ increases, as indicated by the decreasing bias and interquartile range (IQR). However, training with the PDE term generally yields more concentrated distributions of $\theta$ with narrower IQRs, indicating greater stability and reliability in parameter estimation. This demonstrates that incorporating the PDE term results in more accurate and robust parameter estimates.

\medskip

\begin{figure}[h]
	\centering
	\includegraphics[width=1.0\textwidth]{./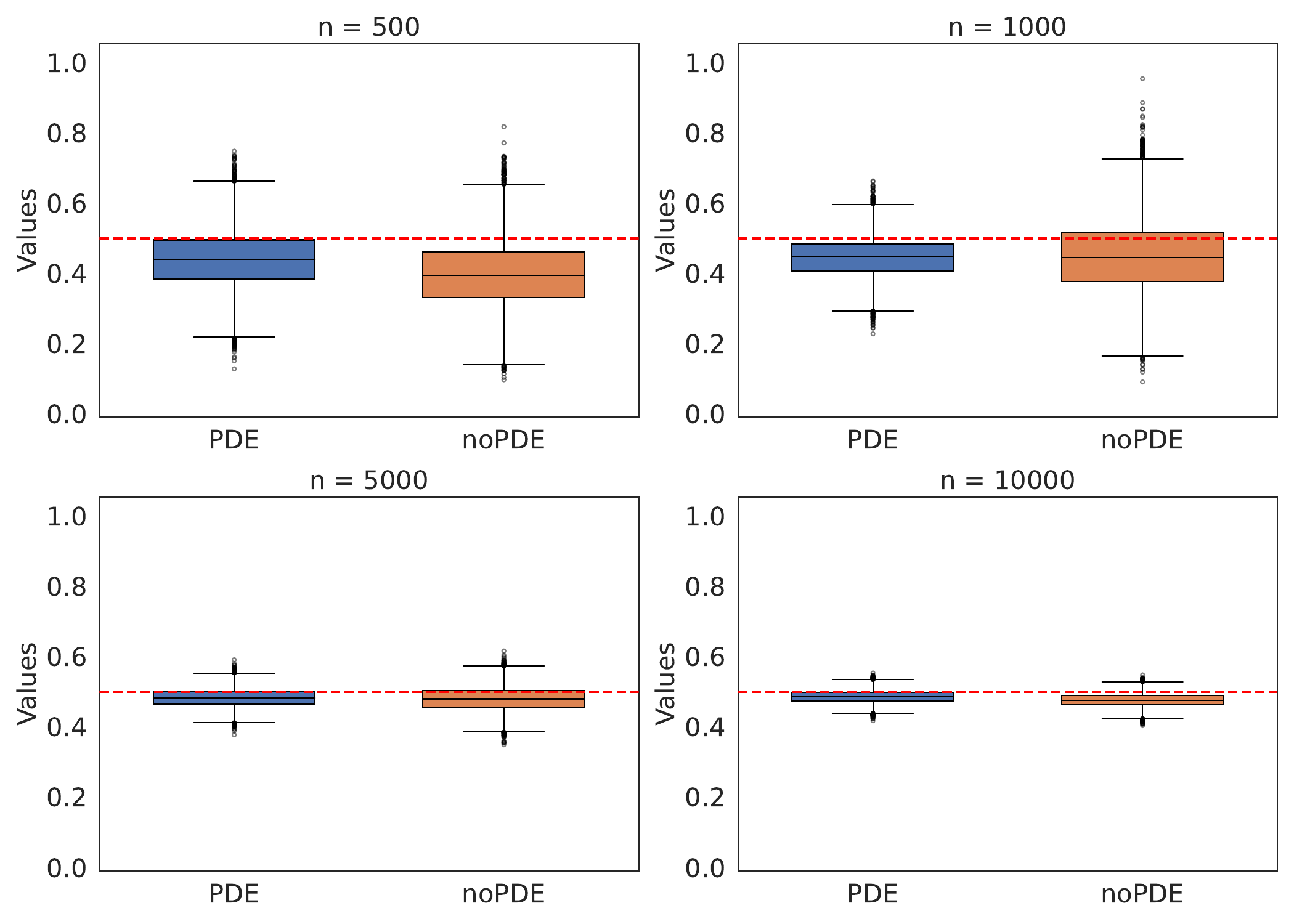} 
	\caption{Comparison between boxplots of sampled $\theta$ from training with/without pde with different sample sizes n at noise level = 10\%.  True $\theta^* = 0.5$ in the dotted line.}
	\label{sec:heat:fig:1}
\end{figure}

To further investigate the role of the PDE term in the recovery of $\theta$, the root mean square error(rMSE) and the Wasserstein-2 distance ($W_2$) are computed and summarized in Table \ref{sec:heat:table:1} and Figure \ref{sec:heat:fig:2}. The rMSE is defined as $\sqrt{\frac{1}{K} \sum_{k=1}^K(\hat{\theta}^{(k)} - \theta^*)^2}$, whereas the $W_2$ metric is  computed using the \texttt{ot.emd2\_1d} function from the POT library by \cite{flamary2021pot} with the parameter $p=2$. These metrics provide a comprehensive comparison of the accuracy (rMSE) and distribution similarity ($W_2$) between sample $\theta$ and the true value $\theta^*$.

\begin{figure}[h]
	\centering
	\includegraphics[width=1.0\textwidth]{./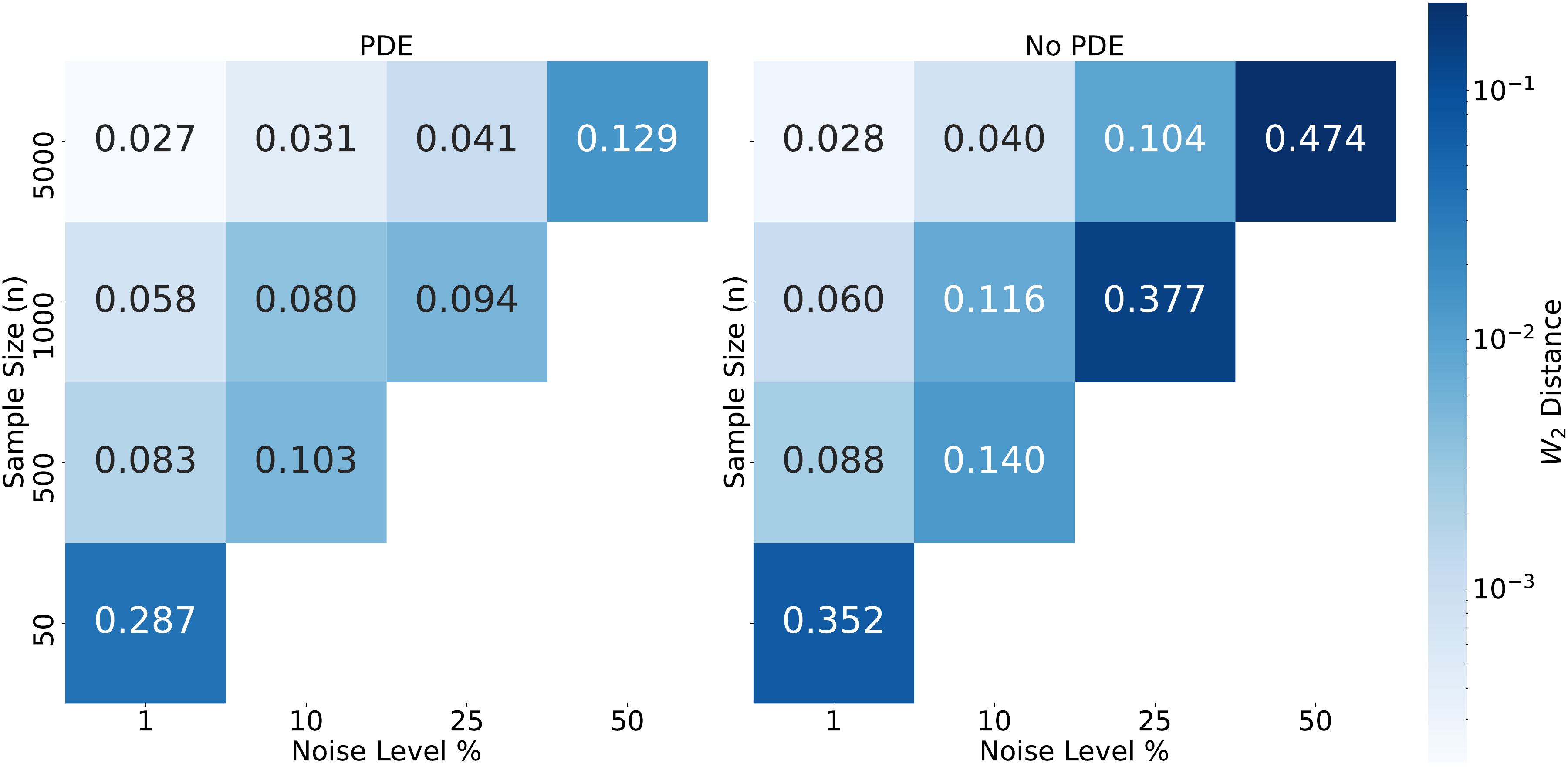} 
	\caption{Comparison between training with/without pde term. The number in each grid is the rMSE between sampled $\theta$ and $\theta^*$. The color represents $W_2$ distance $W_2(\Pi^{(\theta)}, \Pi_*^{(\theta)})$  }
	\label{sec:heat:fig:2}
\end{figure}

\begin{table}[h]
	\centering
	\small
	\caption{Comparison of PDE and No-PDE under different noise levels}
	\label{sec:heat:table:1}
	\begin{tabular}{|c|c||c|c||c|c|}
		\hline
		&  & \multicolumn{2}{c||}{rMSE} & \multicolumn{2}{c|}{$W_2(\Pi^{(\theta)}, \Pi_*^{(\theta)})$} \\
		\hline
		Noise Level & n  & \textbf{PDE} &\textbf{No-PDE} &  \textbf{PDE} &\textbf{No-PDE}  \\
		\hline
		1\% & 50    & 0.287   & 0.352 & 0.0379 & 0.0699 \\
		1\% & 500   & 0.083   & 0.088 & 0.0018& 0.0024 \\
		1\% & 1000  & 0.058   & 0.060& 0.0008 & 0.0010 \\
		1\% & 5000 & 0.027   & 0.028 & 0.0002& 0.0003 \\
		10\% & 500  & 0.103   & 0.140 & 0.0052& 0.0131 \\
		10\% & 1000  & 0.080   & 0.116 & 0.0036& 0.0078 \\
		10\% & 5000 & 0.031  & 0.040 & 0.0004 & 0.0008 \\
		10\% & 10000  & 0.023  & 0.031 & 0.0002 & 0.0007 \\
		25\% & 1000  & 0.094   & 0.377 & 0.0055& 0.1387 \\
		25\% & 5000  & 0.042  & 0.104 & 0.0011 & 0.0090 \\
		50\% & 5000  & 0.129  & 0.474 & 0.0147& 0.2246 \\
		\hline
	\end{tabular}
\end{table}

Figure \ref{sec:heat:fig:2} and Table \ref{sec:heat:table:1} illustrate the performance comparison of models trained with and without PDE constraints across various noise levels and sample sizes.  
As the number of samples ($n$) increases or the noise level decreases, the values of rMSE and $W_2$ distance decrease for both training methods, with and without the PDE constraint. 
 However, the PDE-constrained models perform better, demonstrating lower rMSE and $W_2$ distances than corresponding models without PDE constraints. The difference in performance becomes more noticeable at low sample sizes or high noise levels. This suggests that the PDE constraint can help the model learn more effectively when there is insufficient information from the data alone. 

\subsection{Posterior contraction Behavior}
From the previous section and Table \ref{sec:heat:table:1}, we observed that the $W_2$ distance between $\Pi^{(\theta)}$ and the limiting distribution $\Pi_\star^{(\theta)}$ given in (\ref{heat:target:normal}) decays significantly as the sample size $n$ increases, which is consistent with the conclusion of Theorem \ref{thm:marg:theta}. To further study this contraction behavior, we plot the histogram of $\Pi^{(\theta)}$ (using samples from the MCMC sampler), its Gaussian approximation $\tilde\Pi^{(\theta)}=\textbf{N}(\mu_{\hat\theta}, \sigma^2_{\hat\theta})$ where $\mu_{\hat\theta}$ and $ \sigma_{\hat\theta}$ are computed from the MCMC samples, and the limiting distribution $\Pi_\star^{(\theta)}$,  for various sample sizes ($n = 500$, $1000$, $5000$, $10000$) at a noise level of 10\% ($\sigma \approx 0.025$). The perfect match between $\Pi^{(\theta)}$ and $\tilde\Pi^{(\theta)}$ in Figure \ref{sec:heat:fig:3} suggests that the posterior distribution of $\theta$ is approximately Gaussian. However the persistent discrepancy between $\Pi^{(\theta)}$ and $\Pi_\star^{(\theta)}$ suggests that the $W_2$ convergence to zero between $\Pi^{(\theta)}$ and $\Pi_\star^{(\theta)}$ (as established in Theorem \ref{thm:marg:theta} and illustrated above), likely do not hold in total variation. For two probability measures $P,Q$ with densities $p,q$ respectively, their total variation distance is $\text{TVD}(P, Q) \eqdef \frac{1}{2} \int |p(x) - q(x)| \, dx$. To further illustrate this point we compute in Table \ref{sec:heat:table:2} a lower and an upper on the total variation distance between $\tilde \Pi^{(\theta)}$ and $\Pi_\star^{(\theta)}$.  We compute these bounds by noting (see \cite{devroye2023total}) that for any two probability measures $P,Q$ with densities $p,q$ respectively we have
\begin{equation}\label{lu:tv}
H^2(P, Q)  \leq \text{TVD}(P, Q) \leq \min\left(1, \sqrt{KL(P \lVert Q) / 2}, \sqrt{KL(Q \lVert P) / 2}\right),
\end{equation}
where in the univariate Gaussian case where $P \sim \mathbf{N}(\mu_1, \sigma_1^2)$ with density $p(x)$  and  $Q \sim \mathbf{N}(\mu_2, \sigma_2^2)$ with density $q(x)$, the Hellinger distance $H(P, Q)$ is (see e.g. \cite{pardo2006statistical})
\[H^2(P, Q) \eqdef \frac{1}{2}\int \left(\sqrt{p(x)} - \sqrt{q(x)}\right)^2 \, dx = 1 - \sqrt{\frac{2\sigma_1\sigma_2}{\sigma_1^2 + \sigma_2^2}} \exp\left( -\frac{(\mu_1 - \mu_2)^2}{4(\sigma_1^2 + \sigma_2^2)} \right),\]
and the $KL$-divergence is (see e.g. \cite{rasmussen2005gaussian}) 
\[
D_{KL}(P \| Q) \eqdef \int p(x) \log \frac{p(x)}{q(x)} \, dx = \log\left(\frac{\sigma_2}{\sigma_1}\right) + \frac{\sigma_1^2 + (\mu_1 - \mu_2)^2}{2\sigma_2^2} - \frac{1}{2}.\]

Using these formulas, the third and fourth column of Table \ref{sec:heat:table:2} shows the left-hand side and right-hand side of (\ref{lu:tv}) respectively. From this table, we see, for instance, that at $10\%$ noise level, the $W_2$ distance decreases with the sample size, whereas the TVD lower and upper bounds do not. This lack of TV convergence is because the posterior mean does not converge to $\theta_\star$ fast enough. Indeed, given two univariate normal distributions $\textbf{N}(\mu_{1,n},\frac{1}{n})$ and $\textbf{N}(\mu_{2,n},\frac{1}{n})$, as $n\to\infty$, their $W_2$ distance converges to $0$ as soon as $|\mu_{1,n} - \mu_{2,n}|\to 0$, whereas their Hellinger distance (using the formula above) converges to zero if and only if $|\mu_{1,n} - \mu_{2,n}|$ converges to zero faster than $1/\sqrt{n}$.


\begin{figure}[h]
	\centering
	\includegraphics[width=1.0\textwidth]{./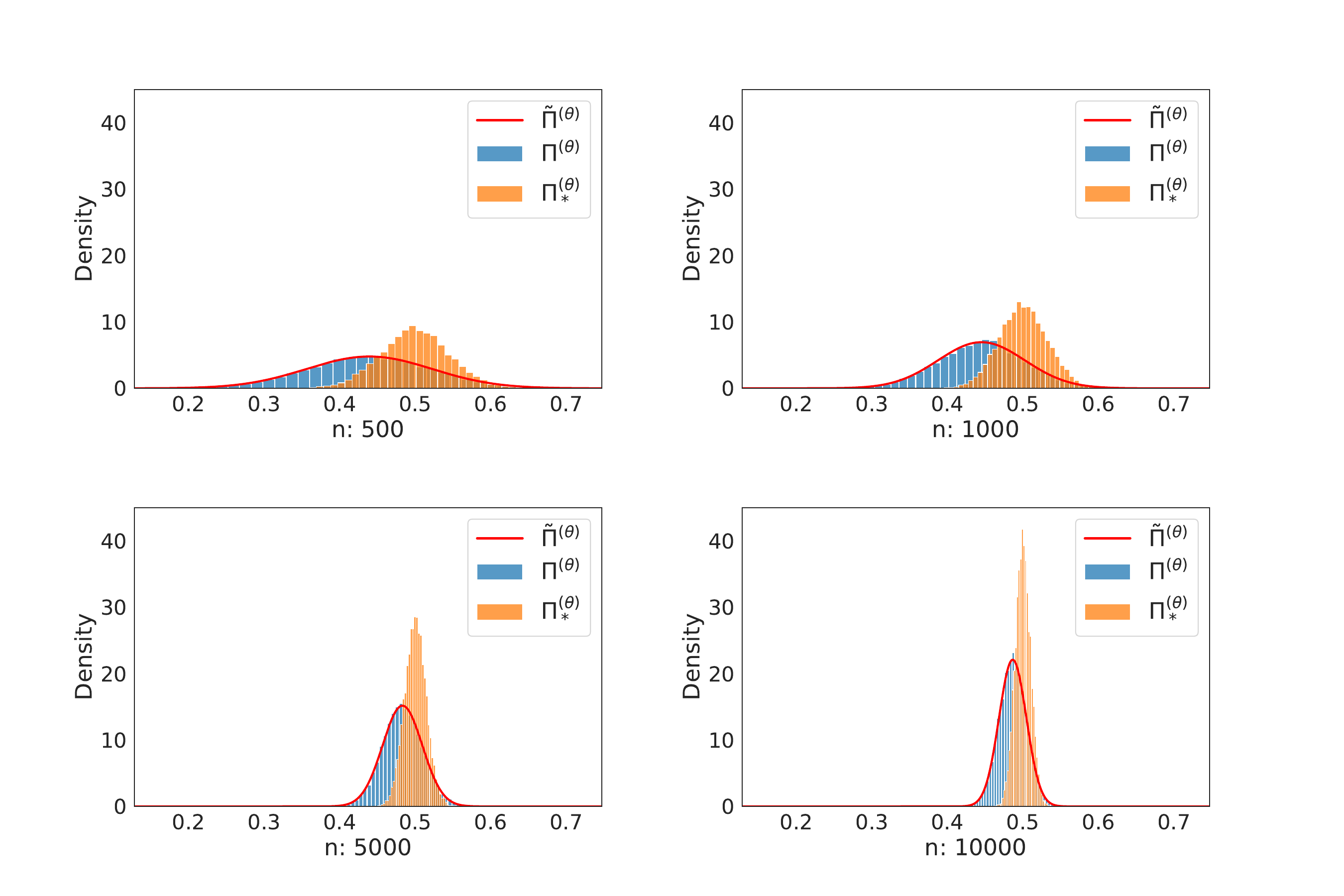} 
	\caption{Histograms of sampled $\theta$, denoted $\Pi^{(\theta)}$, in blue with its normal approximation $\tilde{\Pi}^{(\theta)}$ in red line and target Gaussian distribution $\Pi_*^{(\theta)}$ in orange with increasing sample size n at noise level 10\%}
	\label{sec:heat:fig:3}
\end{figure}



\begin{table}[h!]
	\centering
	\small
	\caption{ $W_2$ and upper and lower bound of TVD under different sample sizes and noise levels}
	\label{sec:heat:table:2}
	\begin{tabular}{|c|c||c|c|c|}
		\hline
		{Noise Level} & {n} & {$W_2$} & {TVD Lower} & {TVD Upper} \\
		\hline
		1\%  & 50    & 0.0377 &  0.149 & 0.482  \\
		1\%  & 500   & 0.0018 & 0.090 & 0.372  \\
		1\%  & 1000  & 0.0008 &  0.084& 0.360 \\
		1\%  & 5000  & 0.0002 & 0.099& 0.387  \\
		10\% & 500   & 0.0051 & 0.176&0.515  \\
		10\% & 1000  & 0.0037 & 0.228&0.594  \\
		10\% & 5000  & 0.0004 & 0.156&0.485   \\
		10\% & 10000 & 0.0002 & 0.173 &0.512 \\
		25\% & 1000  & 0.0054 & 0.270&0.637   \\
		25\% & 5000  & 0.0011 &  0.287 &0.665 \\
		50\% & 5000  & 0.0148 & 0.616&1.000   \\
		\hline
	\end{tabular}
\end{table}

\appendix
\section{Proof of Theorem \ref{thm:margW}}\label{sec:proof:thm:margW}
We define 
\[\Delta_W(y,{\bf s}) \eqdef \left(y-u_W({\bf s})\right)^2,\;\; \Delta_\star(y,{\bf s}) \eqdef \left(y-u_\star({\bf s})\right)^2,\;\;(y,{\bf s})\in\rset\times\Omega.\]
We recall that $\PP_n(f) = n^{-1}\sum_{i=1}^n f(Y_i,{\bf s}_i)$. It follows from (\ref{marginalW}) that for any set $C\subseteq\rset^q$, we have
\begin{equation}\label{marginalW:2}
\Pi(C\vert \D) = \frac{\int_C \exp\left(-\frac{n}{2\sigma^2}\PP_n(\Delta_W-\Delta_\star) + \mathcal{R}_W\right)\Pi_0(\rmd W)}{\int_{\rset^q} \exp\left(-\frac{n}{2\sigma^2}\PP_n(\Delta_W-\Delta_\star) + \mathcal{R}_W\right)\Pi_0(\rmd W)},\end{equation}
where
\begin{multline*}
\mathcal{R}_W \eqdef \mathcal{R}_W^{(1)} + \mathcal{R}_W^{(2)},\;\;\mbox{ with }\;\; \mathcal{R}_W^{(1)} \eqdef -\frac{1}{2}\left(\log\det(\Sigma_W) - \log\det(\Sigma_\star)\right),\;\;\\
\;\;\mbox{ and }\;\; \mathcal{R}_W^{(2)} \eqdef -\frac{\lambda}{2}{\cal J}(u_W).\end{multline*}
We break the proof into three parts. First, in Section \ref{sec:lb:nc}, we give a lower bound on the normalizing constant of the posterior distribution as given in (\ref{marginalW:2}). Then we show in Section \ref{sec:prior:concentration} that the prior distribution $\Pi_0$ has a good inductive bias, as it puts a high probability on $W$ that is sparse. The third part of the proof in Section \ref{sec:emp:proc} establishes some deviation bounds for the empirical process of the log-likelihood ratio. Then, we put all the pieces together in Section \ref{sec:finish:proof:thm:margW}.

\subsection{Lower bound on the normalizing constant}\label{sec:lb:nc}

\begin{lemma}\label{lem:lb}
Assume H\ref{H1}-H\ref{H:stability}. Let $\epsilon_0,s_0,W_0$ be as in H\ref{H:approx}. Then for all $n$ large enough, with probability at least $1 - 2e^{-n\epsilon_0^2/(2\sigma^2)}$, we have
\[
\int_{\rset^q} \exp\left(-\frac{n}{2\sigma^2}\PP_n(\Delta_W-\Delta_\star) + \mathcal{R}_W\right)\Pi_0(\rmd W) \geq C\times  \exp\left(-\frac{Cn\epsilon_0^2}{\sigma^2}\right),
\]
for some absolute constant $C$.
\end{lemma}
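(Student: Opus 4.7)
My plan is to produce the lower bound by restricting the integral to a carefully chosen neighborhood $\mathcal{A}_0$ of $W_0$ and then applying Jensen's inequality, handling the randomness through a single Gaussian tail bound on a linear functional of the noise.

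First I would set up the neighborhood. With $L = L_{W_0,1}$ from H\ref{H:lip}-(1) and $S_0 = \{j : (W_0)_j \neq 0\}$, define
$\mathcal{A}_0 \eqdef \{W \in\rset^q : \textsf{supp}(W) = S_0,\ \|W - W_0\|_\infty \leq \epsilon_0/(L\sqrt{s_0})\}$.
Any $W \in \mathcal{A}_0$ satisfies $\|W - W_0\|_2 \leq \epsilon_0/L \leq 1$ for $n$ large, so H\ref{H:lip}-(1) gives $|u_W - u_{W_0}|_\infty \leq \epsilon_0$ and H\ref{H:approx} yields $|u_W - u_\star|_\infty \leq 2\epsilon_0$. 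For the prior mass, the spike--and--slab structure gives $\bar\Pi_0(\Lambda = \mathbf{1}_{S_0}) \geq c_0 q^{-(\mathsf{u}+1)s_0}$, and the Gaussian slab density in a $\delta$-interval around $(W_0)_j$ is at least $c\delta e^{-((W_0)_j)^2}$; multiplying and invoking (\ref{tech:cond:3}) together with the defining inequality $\mathsf{u}s_0\log(q) \lesssim n\epsilon_0^2/\sigma^2$ yields $\log \Pi_0(\mathcal{A}_0) \geq -Cn\epsilon_0^2/\sigma^2$.

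Next I would control $\mathcal{R}_W$ uniformly on $\mathcal{A}_0$. Since $|u_W - u_\star|_2 \leq 2\epsilon_0 \nu(\Omega)^{1/2}$, H\ref{H:stability} yields $|\H u_W - \H u_\star|_2 \leq C \epsilon_0^\kappa$; combined with the lower eigenvalue bound (\ref{tech:cond:2}) this allows a first-order expansion of $\log\det\Sigma_W - \log\det\Sigma_\star$ that is $O(d)$, which by (\ref{tech:cond:1}) is $O(n\epsilon_0^2/\sigma^2)$. For the PDE fit term, using $\theta = \theta_\star$ as a feasible choice in the definition (\ref{def:J}) of ${\cal J}$, together with $f = \H_0 u_\star + \theta_\star^\t \H_1 u_\star$, gives
\[
|f - \H_0 u_W - \theta_\star^\t \H_1 u_W|_2 \leq (1 + \|\theta_\star\|_2)|\H u_W - \H u_\star|_2 \leq C(1+\|\theta_\star\|_2)\epsilon_0^\kappa,
\]
so $\tfrac{\lambda}{2}{\cal J}(u_W) \leq C\lambda(1+\|\theta_\star\|_2)^2 \epsilon_0^{2\kappa} + \tfrac{1}{2}\|\theta_\star\|_2^2$, which is $\leq C n\epsilon_0^2/\sigma^2$ by (\ref{cond:lambda}) and (\ref{tech:cond:1}). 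Hence $\mathcal{R}_W \geq -C n\epsilon_0^2/\sigma^2$ for all $W \in \mathcal{A}_0$.

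Finally, let $\nu = \Pi_0\!\mid_{\mathcal{A}_0}/\Pi_0(\mathcal{A}_0)$. Since $x \mapsto e^{-x}$ is convex, Jensen's inequality applied to $\nu$ gives
\[
\int_{\rset^q}\! e^{-\ell_W^{(n)} + \mathcal{R}_W}\Pi_0(\rmd W)
\geq \Pi_0(\mathcal{A}_0)\exp\!\Big(-\mathbb{E}_\nu[\ell_W^{(n)}] + \inf_{\mathcal{A}_0}\mathcal{R}_W\Big),
\]
where I write $\ell_W^{(n)} \eqdef \tfrac{n}{2\sigma^2}\PP_n(\Delta_W - \Delta_\star)$. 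Expanding with $\epsilon_i = (Y_i - u_\star(\mathbf{s}_i))/\sigma \sim \N(0,1)$,
\[
\ell_W^{(n)} = \frac{1}{2\sigma^2}\sum_{i=1}^n (u_W - u_\star)^2(\mathbf{s}_i) - \frac{1}{\sigma}\sum_{i=1}^n \epsilon_i (u_W - u_\star)(\mathbf{s}_i).
\]
The first sum is bounded deterministically on $\mathcal{A}_0$ by $2n\epsilon_0^2/\sigma^2$. For the second, linearity of $\mathbb{E}_\nu$ gives $\mathbb{E}_\nu[\sum_i \epsilon_i (u_W - u_\star)(\mathbf{s}_i)] = \sum_i \epsilon_i (\bar u - u_\star)(\mathbf{s}_i)$ where $\bar u = \mathbb{E}_\nu[u_W]$ is deterministic and still satisfies $|\bar u - u_\star|_\infty \leq 2\epsilon_0$. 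Conditional on $(\mathbf{s}_i)$, this is a centered Gaussian with variance at most $4n\epsilon_0^2$, so with $t = 2n\epsilon_0^2/\sigma$ the standard Gaussian tail bound gives
\[
\PP\!\left(\Big|\tfrac{1}{\sigma}\sum_i \epsilon_i (\bar u - u_\star)(\mathbf{s}_i)\Big| > 2n\epsilon_0^2/\sigma^2\right) \leq 2 e^{-n\epsilon_0^2/(2\sigma^2)}.
\]
On the complement, $\mathbb{E}_\nu[\ell_W^{(n)}] \leq 4n\epsilon_0^2/\sigma^2$, and combining with the prior mass bound and the bound on $\inf_{\mathcal{A}_0}\mathcal{R}_W$ yields the claim.

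The main technical obstacle is ensuring that the various constants align so that (i) the Gaussian tail produces exactly the $e^{-n\epsilon_0^2/(2\sigma^2)}$ rate in the lemma and (ii) the control of ${\cal J}(u_W)$ absorbs both the $\lambda(1+\|\theta_\star\|_2)^2 \epsilon_0^{2\kappa}$ and $\|\theta_\star\|_2^2$ contributions under (\ref{cond:lambda})--(\ref{tech:cond:1}); everything else is essentially bookkeeping around the spike-and-slab prior mass and the local Lipschitz estimates on $W \mapsto u_W$.
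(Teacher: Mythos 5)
Your proposal is correct and follows the same skeleton as the paper's proof (restrict to the sparse $\ell^\infty$-neighborhood $\{W:\mathsf{supp}(W)=\mathsf{supp}(W_0),\ \|W-W_0\|_\infty\leq \epsilon_0/(Ls_0^{1/2})\}$, lower-bound its prior mass via the spike-and-slab structure and (\ref{tech:cond:3}), and control $\mathcal{R}_W$ uniformly there), but two of the key steps are executed by genuinely different arguments. First, for the random log-likelihood term, the paper fixes each $W$ in the neighborhood, applies a Gaussian tail bound to $\frac{1}{\sigma}\sum_i\xi_i(u_W-u_\star)({\bf s}_i)$ pointwise, and then uses a Fubini--Markov argument to conclude that with high probability at least half of the $e^{\mathcal{R}_W}$-weighted prior mass of the neighborhood survives; you instead push the expectation under the normalized restricted prior inside the exponential via Jensen, which reduces everything to a single Gaussian tail bound on the fixed linear functional $\sum_i\xi_i(\bar u-u_\star)({\bf s}_i)$ with $\bar u=\mathbb{E}_\nu[u_W]$. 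Both yield exactly the $1-2e^{-n\epsilon_0^2/(2\sigma^2)}$ probability; yours is more elementary and avoids the factor-of-$\tfrac12$ bookkeeping, while the paper's version establishes an event formulation that it reuses later. Second, for $\mathcal{R}_W^{(2)}=-\tfrac{\lambda}{2}{\cal J}(u_W)$, the paper works with the closed form ${\cal J}(u_W)=|\bar f_W|_2^2-\Phi_W^\t\Sigma_W^{-1}\Phi_W$ and an $L^2$-projection argument, whereas you simply plug the feasible point $\theta=\theta_\star$ into the minimization (\ref{def:J}) and use $f=\H_0u_\star+\theta_\star^\t\H_1u_\star$ together with H\ref{H:stability} and (\ref{cond:lambda}), (\ref{tech:cond:1}); this is shorter and equally rigorous. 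The only blemishes are cosmetic: the slab mass of a $\delta$-interval around $(W_0)_j$ should carry $e^{-(|(W_0)_j|+\delta)^2/2}$ rather than $e^{-((W_0)_j)^2}$, and the variance of the Gaussian functional is $4n\epsilon_0^2$ for $\sum_i\xi_i(\bar u-u_\star)({\bf s}_i)$ (so $4n\epsilon_0^2/\sigma^2$ once the $1/\sigma$ prefactor is included); neither affects the rate.
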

\begin{proof}
By definition of $W_0$, $|u_{W_0} - u_\star|_\infty \leq \epsilon_0$. Let $L\eqdef L_{W_0,1}\geq 1$ be as in H\ref{H:lip}. Let $\Lambda_0\in\{0,1\}^q$ denote the sparsity support of $W_0$,  and let 
\begin{equation}\label{eq:eta}
\eta \eqdef \frac{\epsilon_0}{L s_0^{1/2}},\;\;\mbox{ and }\;\; \mathcal{V}\eqdef\{W\in\rset^q:\; \mathsf{supp}(W)=\Lambda_0,\;\|W - W_0\|_\infty \leq \eta\}.
\end{equation}
Since $\eta s_0^{1/2}\leq 1$, for all $W\in\mathcal{V}$, we have
\begin{equation}\label{l2:ball}
\|W - W_0\|_2 \leq s_0^{1/2}\|W - W_0\|_\infty \leq s_0^{1/2}\eta \leq 1.
\end{equation}
Using (\ref{l2:ball}), and the Lipschitz assumption imposed in H\ref{H:lip}, we deduce that for all $W\in\mathcal{V}$,
\begin{equation}\label{ballN}
|u_W - u_{W_0}|_\infty\leq L\|W- W_0\|_2 \leq L s_0^{1/2}\eta\leq \epsilon_0.
\end{equation}
From the above display, and appealing to (\ref{lip:H}) and H\ref{H:lip}-(3) we further deduce that for all $W\in\mathcal{V}$,
\begin{equation}\label{ballN:1}
| \H u_W - \H u_{W_0}|_2  \leq  C_0 \max_{{\bf k}:\;|{\bf k}|\leq \tau} \;\left|D^{\bf k} u_W - D^{\bf k} u_{W_0}\right|_\infty \leq C_0c_1 \epsilon_0^\kappa.
\end{equation}
In view of H\ref{H:stability}, we also deduce that for all $W\in\mathcal{V}$,
\begin{equation}\label{ballN:2}
|u_W - u_\star|_\infty\leq 2\epsilon_0, \;\;\mbox{ and }\;\; | \H u_W - \H u_\star|_2  \leq 2^\kappa C_4 \epsilon_0^\kappa.
\end{equation}

\paragraph{\texttt{Step 1}} First we show that with probability at least $1 - 2e^{-n\epsilon_0^2/(2\sigma^2)}$, it holds
\begin{equation}\label{step:1}
\int_{\rset^q} \exp\left(-\frac{n}{2\sigma^2}\PP_n(\Delta_W-\Delta_\star) + \mathcal{R}_W\right)\Pi_0(\rmd W) \geq  \frac{1}{2}e^{-4n\epsilon_0^2/\sigma^2}\int_{\mathcal{V}} e^{\mathcal{R}_W} \Pi_0(\rmd W).
\end{equation}
To establish this, we recall that with $\xi_i = (Y_i - u_\star({\bf s}_i))/\sigma$, we have
\[-\frac{n}{2\sigma^2}\PP_n(\Delta_W-\Delta_\star) = -\frac{1}{2\sigma^2}\sum_{i=1}^n (u_W({\bf s}_i) - u_\star({\bf s}_i))^2 +\frac{1}{\sigma}\sum_{i=1}^n \xi_i(u_W({\bf s}_i) - u_\star({\bf s}_i)).\]
Using (\ref{ballN:2}), we note that for $W\in\mathcal{V}$, 
\[\frac{1}{\sigma^2}\sum_{i=1}^n (u_W({\bf s}_i) - u_\star({\bf s}_i))^2 \leq A,\]
where
\[A \eqdef \frac{4n\epsilon_0^2}{\sigma^2}.\]
By Gaussian tail bounds, for all $W\in\mathcal{V}$,
\[\PP\left(\frac{1}{\sigma}\sum_{i=1}^n \xi_i(u_W({\bf s}_i) - u_\star({\bf s}_i)) > \frac{A}{2} \;\vert {\bf s}_{1:n}\right)\leq \exp\left(-\frac{A^2}{8A}\right)\leq e^{-A/8}.\]
Therefore, with $\e_W \eqdef \{\D:\; \frac{1}{\sigma}\sum_{i=1}^n \xi_i(u_W({\bf s}_i) - u_\star({\bf s}_i)) \leq A/2\}$,
\begin{multline*}
\int_{\rset^q} \exp\left(-\frac{n}{2\sigma^2}\PP_n(\Delta_W-\Delta_\star) + \mathcal{R}_W\right)\Pi_0(\rmd W) \geq e^{-A} \int_{\mathcal{V}}\textbf{1}_{\e_W}(\D) e^{\mathcal{R}_W}\Pi_0(\rmd W),
\end{multline*}
and with $\e_W^c$ denoting the complement of $\e_W$, we obtain
\begin{multline*}
\PP\left[\int_{\rset^q} \exp\left(-\frac{n}{2\sigma^2}\PP_n(\Delta_W-\Delta_\star) + \mathcal{R}_W\right)\Pi_0(\rmd W) < \frac{e^{-A}}{2}\int_{\mathcal{V}} e^{\mathcal{R}_W} \Pi_0(\rmd W) \;\vert {\bf s}_{1:n}\right] \\
\leq \PP\left[\int_{\mathcal{V}}\textbf{1}_{\e_W}(\D)e^{\mathcal{R}_W} \Pi_0(\rmd W) < \frac{1}{2}\int_{\mathcal{V}} e^{\mathcal{R}_W} \Pi_0(\rmd W) \;\vert {\bf x}_{1:n}\right] \\
= \PP\left[\int_{\mathcal{V}}\textbf{1}_{\e_W^c}(\D)e^{\mathcal{R}_W}  \Pi_0(\rmd W) > \frac{1}{2}\int_{\mathcal{V}}e^{\mathcal{R}_W} \Pi_0(\rmd W)  \;\vert\; {\bf x}_{1:n}\right]\\
\leq \frac{2}{\int_{\mathcal{V}}e^{\mathcal{R}_W} \Pi_0(\rmd W)}\int_{\mathcal{V}}\PP\left(\frac{1}{\sigma}\sum_{i=1}^n \xi_i(u_W({\bf s}_i) - u_\star({\bf s}_i)) > \frac{A}{2}\;\vert\; {\bf s}_{1:n}\right)e^{\mathcal{R}_W} \Pi_0(\rmd W)\\
\leq 2e^{-A/8},
\end{multline*}
which is (\ref{step:1}).

\paragraph{ \texttt{Step 2}} We now show that
\begin{equation}\label{eq:lb:d}
\int_{\mathcal{V}}e^{\mathcal{R}_W} \Pi_0(\rmd W) \geq c \times \exp\left(-C \frac{n\epsilon_0^2}{\sigma^2}\right).
\end{equation}
Using the definition of $\Sigma_W$, we first note that for all $W_1,W_2$ 
\begin{eqnarray}\label{comp:Sigma}
\|\Sigma_{W_1} - \Sigma_{W_2}\|_{\textsf{op}} & \leq & \sqrt{2}\left(|\H u_{W_1}|_2 + |\H u_{W_2}|_2\right)\; |\H u_{W_1} - \H u_{W_2}|_2 \nonumber\\
& \leq & \sqrt{2}\left(2|\H u_{W_1}|_2 + |\H u_{W_2} - \H u_{W_1}|_2 \right)|\H u_{W_1} - \H u_{W_2}|_2.\end{eqnarray}
We combine this with (\ref{ballN:1}) to obtain that  for all $W\in\mathcal{V}$,
\[
\|\Sigma_{W} - \Sigma_{W_0}\|_{\textsf{op}} \leq 2C_0c_1\left(2|\H u_{W_0}|_2 + C_0c_1\epsilon_0^\kappa\right)\epsilon_0^\kappa \leq \frac{C_2}{2},\]
for all $n$ large enough, since $\epsilon_0\to 0$, as $n\to\infty$. Therefore, Weyl's inequality, and $\lambda_{\textsf{min}}(\Sigma_{W_0})\geq C_2$ imply that for all $W\in\mathcal{V}$, and $n$ large enough
\[\lambda_{\textsf{min}}(\Sigma_{W})\geq \frac{C_2}{2}.\]
As a result of the last display, we can use a first order Taylor expansion of the $\log\det$ to conclude that for all $W\in\mathcal{V}$,
\begin{multline*}
|\mathcal{R}_W^{(1)}|   = \left|\frac{1}{2}\left(\log\det(\Sigma_W) - \log\det(\Sigma_\star)\right)\right| \leq \frac{2d^{1/2}}{C_2}\normfro{\Sigma_W - \Sigma_\star} \leq \frac{2d}{C_2}\|\Sigma_W - \Sigma_\star\|_{\textsf{op}}\\
\leq \frac{4d}{C_2}\left(2|\H u_\star|_2 + |\H u_W - \H u_\star|_2\right)|\H u_W - \H u_\star|_2\\
 \leq 4\frac{2^\kappa C_4 d}{C_2} \left(2|\H u_\star|_2 +  2^\kappa C_4 \epsilon_0^\kappa\right)\epsilon_0^\kappa \leq 1,
\end{multline*}
again, for all $n$ large enough. For $W\in\mathcal{V}$, let $J_W$ be the $L^2$ projector on the linear space spanned by the function $\{(\H u_W)_i,\;1\leq i\leq d\}$ in $L^2(\Omega,\rset,\nu)$. Note that, since $\lambda_{\textsf{min}}(\Sigma_W)\geq C_2/2$, that sub-space is isomorphic to $\rset^d$, and by expressing the calculation in $\rset^d$, it easily follows that
\[\left|\Phi_W^\t \Sigma_W^{-1}\Phi_W - |J_W \bar f_W|_2^2\right| \leq \frac{2}{\lambda C_2},\]
where we recall that $\bar f_W = f -\H_0 u_W$. 
As a result, for $W\in \mathcal{V}$, 
\[\left|\Phi_W^\t \Sigma_W^{-1}\Phi_W - |\bar f_W|_2^2 \right| \leq \frac{2}{\lambda C_2} + \left||J_W \bar f_W|_2^2 -  |\bar f_W|_2^2\right| = \frac{2}{\lambda C_2} + \left|J_W \bar f_W - \bar f_W\right|_2^2,\]
where the equality uses the fact that for all $u\in L^2(\Omega,\rset,\nu)$,  $|u|_2^2 = |J_W u|_2^2 + |(J_W-\un)u|_2^2$. By the definition of the projector as closest element, and since $\bar f_W = f - \H_0 u_W = (\H_0 u_\star - \H_0 u_W) + \theta_\star^\t\H u_\star$,
\begin{multline*}
\left|J_W \bar f_W - \bar f_W\right|_2^2 \leq \left|\theta_\star^\t(\H u_W) -\bar f_W \right|_2^2 \leq  2\left|\H_0 u_\star - \H_0 u_W\right|_2^2 + 2\left|\theta_\star^\t(\H u_W) - \theta_\star^\t(\H u_\star)\right|_2^2 \\
\leq  C \left(1 + \|\theta_\star\|_2^2\right) \epsilon_0^{2\kappa},
\end{multline*}
for some absolute constant $C$ that depends only on  $\kappa$ and $C_4$. We conclude  that for $W\in\mathcal{V}$,
\[|\mathcal{R}_W^{(2)}| =\frac{\lambda}{2}\left(|f|_2^2 - \Phi_W^\t \Sigma_W^{-1}\Phi_W\right) \leq \frac{2}{C_2}  + C(1 + \|\theta_\star\|_2^2) \lambda\epsilon_0^{2\kappa} \leq \frac{1}{C_2} + \frac{C n\epsilon_0^{2}}{2\sigma^2},\]
where in the ast inequality we have used (\ref{cond:lambda}). Hence, there exists an absolute constant $C$, such that for all $n$ large enough,
\begin{equation}\label{lem:lb:eq:2}
\int_{\mathcal{V}}e^{\mathcal{R}_W} \Pi_0(\rmd W) \geq C e^{-\frac{Cn\epsilon_0^2}{2\sigma^2}}\Pi_0(\mathcal{V}).
\end{equation}
Lemma \ref{lem:prior} below gives the lower bound
\[\Pi_0(\mathcal{V}) \geq \frac{1}{2}\exp\left(-s_0(\mathsf{u}+2)\log(q) -\frac{s_0}{2}(\|W_0\|_\infty +1)^2 -s_0\log\left(\frac{Ls_0^{1/2}}{\epsilon_0}\right)\right),\]
which together with assumption (\ref{tech:cond:3}) and the  inequality  in  (\ref{lem:lb:eq:2}) yields 
\[\int_{\mathcal{V}}e^{\mathcal{R}_W} \Pi_0(\rmd W) \geq c \times \exp\left(-C\left[\frac{n\epsilon_0^2}{\sigma^2}  +  s_0(\mathsf{u}+2)\log(q)\right]\right).\]
From the definition of $s_0$  in (\ref{eq:approx_s_val}), we have
\[s_0(\mathsf{u}+2)\log(q) \leq 6\mathsf{u}(s_0-1)\log(q) \leq 6\frac{n\epsilon_0^2}{\sigma^2},\]
which then yields (\ref{eq:lb:d}). The lemma follows from (\ref{step:1}) and (\ref{eq:lb:d}).
\end{proof}

We show here that the prior $\Pi_0$ has good contraction properties.
\begin{lemma}\label{lem:prior}
\begin{enumerate}
\item Given  $s,r\geq 0$, define
\[\W(s,r)\eqdef\left\{W\in \rset^q :\;\|W\|_0 \leq s,\;\mbox{ and }\; \|W\|_\infty \leq r\right\}.\]
If $r\geq \sqrt{(1+s)(2+\mathsf{u})\log(q)}$, we have
\[\Pi_0\left(\W(s,r)\right)\geq 1 - \frac{4}{q^{\mathsf{u}(1+s)}}.\]
\item Assume $q\geq \sqrt{2\pi}$. Fix $W_0\in\rset^q$ with sparsity support $\Lambda_0$. Given $r>0$, let 
\[\mathcal{V}(W_0,r) \eqdef \left\{W\in\rset^q:\; \mathsf{supp}(W)=\Lambda_0,\;\|W-W_0\|_\infty \leq r\right\}.\]
\[\Pi_0\left(\mathcal{V}(W_0,r)\right) \geq \frac{1}{2}\exp\left(-s_0(\mathsf{u}+2)\log(q) -\frac{s_0}{2}(\|W_0\|_\infty +r)^2  + s_0\log(r))\right),\]
where $s_0 = \|W_0\|_0$.
\end{enumerate}
\end{lemma}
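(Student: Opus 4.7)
The two parts are independent direct computations from the structure of $\Pi_0$, which is the law of $W\odot\Lambda$ with $\Lambda_j\stackrel{i.i.d}{\sim}\mathrm{Ber}(p)$, $p=(1+q^{\mathsf{u}+1})^{-1}$ (so that $qp\le q^{-\mathsf{u}}$), and with coordinates $W_j$ conditionally standard Gaussian on $\{\Lambda_j=1\}$. Note also that $\|W\odot\Lambda\|_0=|\Lambda|$ and $\|W\odot\Lambda\|_\infty = \max_{j:\Lambda_j=1}|W_j|$ almost surely.

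For part (1), I would union-bound
\[
\Pi_0(\W(s,r)^c) \le \PP(|\Lambda|>s) + \PP(\|W\odot\Lambda\|_\infty>r)
\]
and estimate each term by an elementary tail bound. For the sparsity piece I would use $\binom{q}{k}p^k \le (qp)^k/k! \le q^{-\mathsf{u} k}/k!$ and sum the resulting geometric tail starting at $k=s+1$ to obtain $\PP(|\Lambda|>s)\le 2 q^{-\mathsf{u}(s+1)}$. For the sup-norm piece I would condition on $\Lambda$ and apply a union bound together with the Gaussian tail $\PP(|Z|>r)\le 2e^{-r^2/2}$, giving $\PP(\|W\odot\Lambda\|_\infty>r)\le 2\,E[|\Lambda|]\,e^{-r^2/2} = 2qp\,e^{-r^2/2}$. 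The hypothesis $r^2\ge (1+s)(2+\mathsf{u})\log q$ and $qp\le q^{-\mathsf{u}}$ turn this into $2 q^{-\mathsf{u}-(1+s)(2+\mathsf{u})/2}$, which is at most $2q^{-\mathsf{u}(1+s)}$ by the elementary exponent inequality $\mathsf{u}+(1+s)(2+\mathsf{u})/2 \ge \mathsf{u}(1+s)$. Summing yields the stated bound $4/q^{\mathsf{u}(1+s)}$.

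For part (2), I would first use that $W_{0,j}=0$ for $j\notin\Lambda_0$ to identify, almost surely, $\{W\odot\Lambda\in\mathcal{V}(W_0,r)\}$ with $\{\Lambda=\Lambda_0\}\cap\{|W_j-W_{0,j}|\le r \text{ for all } j\in\Lambda_0\}$. The conditional independence structure of $\bar\Pi_0$ then factors the probability as
\[
\Pi_0(\mathcal{V}(W_0,r)) = \PP(\Lambda=\Lambda_0)\cdot\prod_{j\in\Lambda_0}\PP(|Z-W_{0,j}|\le r),\qquad Z\sim\mathbf{N}(0,1).
\]
I would lower-bound the discrete factor by $p^{s_0}(1-p)^{q-s_0} \ge (2q^{\mathsf{u}+1})^{-s_0}(1-p)^q \ge \tfrac12(2q^{\mathsf{u}+1})^{-s_0}$, using $(1-p)^q\ge 1-qp\ge 1/2$ under $\mathsf{u}\ge 1$ and $q$ at least moderate. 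For each Gaussian factor I would lower-bound the density on $[W_{0,j}-r,W_{0,j}+r]$ by its value at the endpoint farthest from the origin to get
\[
\PP(|Z-W_{0,j}|\le r)\ge \frac{2r}{\sqrt{2\pi}}\exp\!\left(-\frac{(|W_{0,j}|+r)^2}{2}\right).
\]
Taking the product over $j\in\Lambda_0$, replacing each $|W_{0,j}|$ by $\|W_0\|_\infty$, and invoking the hypothesis $q\ge\sqrt{2\pi}$ to absorb the factor $(2\pi)^{-s_0/2}$ into an additional $q^{-s_0}$ (which upgrades $q^{-s_0(\mathsf{u}+1)}$ from the discrete factor to the $q^{-s_0(\mathsf{u}+2)}$ in the target) reproduces the stated lower bound.

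The only nontrivial bookkeeping is ensuring the constants line up so that the leading $\tfrac12$ in part (2) genuinely survives the combination of the discrete and Gaussian factors, and checking the elementary exponent inequality in part (1). No probabilistic tool beyond binomial and univariate Gaussian tail estimates is needed, so I do not anticipate a conceptual obstacle.
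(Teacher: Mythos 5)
Your proof is correct and follows essentially the same route as the paper's: the same union-bound/binomial-tail decomposition for part (1) and the same Bernoulli-times-Gaussian factorization with a pointwise density lower bound for part (2); your variant of the sup-norm bound in (1), which pays $qp\le q^{-\mathsf{u}}$ via $\E[\|\Lambda\|_0]$ instead of conditioning on $\|\Lambda\|_0\le s$ and paying a factor $s$, is in fact marginally tighter. One caveat: the exponent inequality $\mathsf{u}+(1+s)(2+\mathsf{u})/2\ge\mathsf{u}(1+s)$ you invoke reduces to $2+\mathsf{u}\ge(\mathsf{u}-2)s$ and hence fails for $\mathsf{u}>2$ and large $s$ — but the paper's own chain ($2e^{-r^2/2+\log(s)}\le 2q^{-\mathsf{u}(1+s)}$) has the identical limitation, so this is a defect of the stated constants shared with the original rather than a gap in your argument.
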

\begin{proof}
\begin{enumerate}
\item  Let $\W^c$ be a short for $\rset^q\setminus W(s,r)$. Let $\bar\Pi_0$ denote the joint of $(\Lambda,W)$ in the definition of $\Pi_0$ (see Section \ref{sec:prior}). Then
\[\Pi_0(\W^c) = \Pi_0(\|\Lambda\|_0 >s) + \sum_{\Lambda:\;\|\Lambda\|_0\leq s} \bar\Pi_0(\Lambda)\times \bar\Pi_0(\|\Lambda\odot W\|_\infty >r \vert \Lambda).\]
 If $( \Lambda , W )\sim\bar\Pi_0$, then $ \Lambda $ is an ensemble of iid random variables drawn from the Bernoulli distribution with success probability $(1+ q^{\mathsf{u}+1})^{-1}$. Hence
\begin{multline*}
\Pi_0(\| \Lambda \|_0 > s) \leq \sum_{j> s} {q\choose j}\left(\frac{1}{1 + q^{\mathsf{u}+1}}\right)^{j} \left(\frac{q^{\mathsf{u}+1}}{1 + q^{\mathsf{u}+1}}\right)^{q - j} \\
\leq \sum_{j> s} {q\choose j} \left(\frac{1}{q^{\mathsf{u}+1}}\right)^j \leq 2\left(\frac{1}{q^{\mathsf{u}}}\right)^{s+1},\end{multline*}
where we use ${q\choose j}\leq q^j$, and $q^{\mathsf{u}}\geq 2$. 
Given $\Lambda_{d}=1$, $W_{d}\sim \mathbf{N}(0,1)$. Therefore, $\PP(|W_d|>t)\leq 2e^{-t^2/2}$ for all $t\geq 0$. Hence by union bound, for $\| \Lambda \|_0 \leq s$, and since $r\geq \sqrt{(1+s)(2+\mathsf{u})\log(q)}$, we obtain
\[
\Pi_0\left(\|\Lambda\odot W\|_\infty >r \; \vert \;  \Lambda \right) \leq
2e^{-r^2/2 + \log(s)} \leq \frac{2}{q^{\mathsf{u}(1+s)}}.\]
We conclude that
\begin{equation}\label{control:Pi:A}
\Pi_0(\W^c) \leq \frac{4}{q^{\mathsf{u}(1+s)}}. 
\end{equation}

\item  We write $\mathcal{V}$ as a short for $\mathcal{V}(W_0,r)$. We have
\[\Pi_0(\mathcal{V}) = \bar\Pi_0(\Lambda_0) \bar\Pi_0(\|\Lambda \odot W - W_0\|_\infty \leq r\vert \Lambda =\Lambda_0).\]
 Since $\log(1-x)\geq -2x$ for all $0\leq x\leq 1/2$, for $q^{\mathsf{u}} \geq 2/\log(2)$, we have
\begin{multline*}
\bar\Pi_0( \Lambda _0) = \left(\frac{1}{1+ q^{\mathsf{u}+1}}\right)^{\| \Lambda _{0}\|_0} \left(1 - \frac{1}{1+ q^{\mathsf{u}+1}}\right)^{q - \| \Lambda _{0}\|_0} \\
= \left(\frac{1}{q^{\mathsf{u}+1}}\right)^{\| \Lambda _{0}\|_0} \exp\left(q\log\left(1 - \frac{1}{1+ q^{\mathsf{u}+1}}\right)\right)\\
\geq \left(\frac{1}{q^{\mathsf{u}+1}}\right)^{\| \Lambda _{0}\|_0} \exp\left(-\frac{2q}{1+ q^{\mathsf{u}+1}}\right) \geq \frac{1}{2} \left(\frac{1}{q^{\mathsf{u}+1}}\right)^{\| \Lambda _{0}\|_0} = \frac{1}{2} \left(\frac{1}{q^{\mathsf{u}+1}}\right)^{s_0}.
\end{multline*}
If $U\sim \textbf{N}(0,1)$, and $t\geq 0$, then for all $a$, and $c\geq |a|$, \[P(|U-a|\leq t) \geq P(c \leq U\leq c + t) = \Phi(c+t) - \Phi(c)\geq e^{-(c+t)^2/2} \frac{t}{\sqrt{2\pi}},\]  
where $\Phi$ is the cdf of the standard normal distribution. We use this inequality with $c = \|W_0\|_\infty$, and we the assumption $q \geq \sqrt{2\pi}$, we deduce that
\begin{multline*}
\bar\Pi_0\left(\|\Lambda \odot W - W_0\|_\infty \leq r\vert \Lambda =\Lambda_0\right) \geq \frac{1}{q^{s_0}}\exp\left(-\frac{s_0}{2}(\|W_0\|_\infty +r)^2 + s_0 \log(r)\right).
\end{multline*}
Hence
\[\bar\Pi_0( \Lambda _0) \geq \frac{1}{2}\left(\frac{1}{q^{\mathsf{u}+2}}\right)^{s_0} \exp\left(-\frac{s_0}{2}(\|W_0\|_\infty +r)^2 + s_0 \log(r)\right),\]
as claimed.
\end{enumerate}
\end{proof}

\subsection{Ignorability of unsuitable weights}\label{sec:prior:concentration}

Given integer $s\geq 0$, $\tau_s\eqdef \sqrt{(2+\mathsf{u})(1+s)\log(q)}$, and a constant $C$, we define that 
\[\W_0(s,C) \eqdef \left\{W\in\rset^q:\;\|W\|_0\leq s,\;\mbox{ and }\; \|W\|_\infty \leq \tau_s,\;\mbox{ and }\; {\cal J}(u_W)\leq \frac{C n\epsilon_0^2}{\lambda\sigma^2}\right\}.\]

Our next result shows that the prior $\Pi_0$ puts most of its probability mass on $\W_0(s,C)$.
 
\begin{lemma}\label{lem:good:set}
Assume H\ref{H1}-H\ref{H:stability}. Let $s_0, \epsilon_0$, $W_0$ be as in H\ref{H:approx}. We can find an absolute constant $C$ such that with
\begin{equation}
\label{choice:s}
 s = C s_0,
 \end{equation}
it holds,
\[\PE\left[\Pi\left(\W_0(s,C)\vert \D\right)\right] \geq 1 - C_0 e^{-\frac{n\epsilon_0^2}{2\sigma^2}},\]
for some absolute constant $C_0$. 
\end{lemma}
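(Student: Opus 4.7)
The strategy is to control $\PE[\Pi(\W_0(s,C)^c\vert\D)]$ by splitting the complement into three regions and bounding each separately. Write $\W_0(s,C)^c = \mathcal{A}_1\cup\mathcal{A}_2\cup\mathcal{A}_3$, where $\mathcal{A}_1 = \{\|W\|_0 > s\}$, $\mathcal{A}_2 = \{\|W\|_0\leq s,\;\|W\|_\infty > \tau_s\}$, and $\mathcal{A}_3 = \{\|W\|_0\leq s,\;\|W\|_\infty \leq \tau_s,\;{\cal J}(u_W) > Cn\epsilon_0^2/(\lambda\sigma^2)\}$. Let $E$ denote the high-probability event of Lemma \ref{lem:lb}, so that $\PP(E^c)\leq 2e^{-n\epsilon_0^2/(2\sigma^2)}$ and on $E$ the denominator in (\ref{marginalW:2}) is bounded below by $c_*\exp(-C_*n\epsilon_0^2/\sigma^2)$ for absolute constants $c_*,C_*$. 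Since the $E^c$ contribution already matches the target bound, it suffices to control $\PE[\Pi(\mathcal{A}_i|\D)\1_E]$ for $i=1,2,3$.

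For each $i$, set $N_i \eqdef \int_{\mathcal{A}_i}\exp(-\tfrac{n}{2\sigma^2}\PP_n(\Delta_W-\Delta_\star)+\mathcal{R}_W)\Pi_0(\rmd W)$. A direct Gaussian moment-generating-function calculation based on $Y_i\vert{\bf s}_i\sim \mathbf{N}(u_\star({\bf s}_i),\sigma^2)$ gives, for each fixed $W$, the identity $\PE[\exp(-\tfrac{n}{2\sigma^2}\PP_n(\Delta_W-\Delta_\star))]=1$; since $\mathcal{R}_W$ is non-random, Fubini yields $\PE[N_i]=\int_{\mathcal{A}_i}e^{\mathcal{R}_W^{(1)}+\mathcal{R}_W^{(2)}}\Pi_0(\rmd W)$. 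The factor $e^{\mathcal{R}_W^{(1)}}$ is controlled uniformly in $W$ by noting that $\Sigma_W\succeq \lambda^{-1}I_d$ implies $-\log\det\Sigma_W\leq d\log\lambda$, so that $\mathcal{R}_W^{(1)}\leq \tfrac{d}{2}\log(\lambda\|\Sigma_\star\|_{\textsf{op}})$; combining this with (\ref{cond:lambda}) (which gives $\lambda\lesssim n$ up to problem-dependent constants) and (\ref{tech:cond:1}), one obtains $\sup_W e^{\mathcal{R}_W^{(1)}}\leq \exp(C_{\textsf{det}} n\epsilon_0^2/(2\sigma^2))$ for an absolute constant $C_{\textsf{det}}$.

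The three regions are then handled as follows. On $\mathcal{A}_1\cup\mathcal{A}_2$, use $\mathcal{R}_W^{(2)}\leq 0$ together with Lemma \ref{lem:prior}(1), which yields $\Pi_0(\mathcal{A}_1\cup\mathcal{A}_2)\leq 4q^{-\mathsf{u}(1+s)}$; on $\mathcal{A}_3$, the defining constraint gives $e^{\mathcal{R}_W^{(2)}}\leq e^{-Cn\epsilon_0^2/(2\sigma^2)}$. Dividing these two estimates of $\PE[N_i]$ by the denominator lower bound produces, respectively, a bound of order $\exp((C_*+C_{\textsf{det}}/2)n\epsilon_0^2/\sigma^2)\cdot q^{-\mathsf{u}(1+s)}$ and of order $\exp((C_*+C_{\textsf{det}}/2-C/2)n\epsilon_0^2/\sigma^2)$. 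Using the defining property of $s_0$, namely $\mathsf{u} s_0\log q \gtrsim n\epsilon_0^2/\sigma^2$, one can choose $s = \bar C s_0$ and $C$ to be absolute constants large enough that both expressions fall below $e^{-n\epsilon_0^2/(2\sigma^2)}$, which combined with the $E^c$ bound completes the proof.

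The main obstacle is the careful bookkeeping of constants in the exponential rates: the prior tail $q^{-\mathsf{u}(1+s)}$ must absorb both the denominator loss $e^{C_*n\epsilon_0^2/\sigma^2}$ and the log-determinant factor $e^{C_{\textsf{det}}n\epsilon_0^2/(2\sigma^2)}$, which is what forces $s=\bar C s_0$ with a sufficiently large absolute constant $\bar C$. This is also where the technical condition (\ref{tech:cond:1}) is essential: it ensures that the effective dimension $d$ of the $\theta$-integration does not inflate $e^{\mathcal{R}_W^{(1)}}$ beyond what the prior concentration can offset.
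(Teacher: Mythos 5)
Your proposal is correct and follows essentially the same route as the paper: lower-bound the normalizing constant via Lemma \ref{lem:lb}, apply Fubini together with the fact that the expected likelihood ratio equals one, control $\mathcal{R}_W^{(1)}$ via (\ref{tech:cond:1}), kill the high-$\mathcal{J}$ region through $\mathcal{R}_W^{(2)}$, and kill the non-sparse/unbounded region through the prior mass bound of Lemma \ref{lem:prior}(1), finally tuning $s=Cs_0$ via the defining relation $\mathsf{u}s_0\log q\gtrsim n\epsilon_0^2/\sigma^2$. The only cosmetic difference is how the complement is partitioned (you separate $\W(s)^c$ from the high-$\mathcal{J}$ points inside $\W(s)$, while the paper separates all high-$\mathcal{J}$ points first), which does not change the argument.
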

\begin{proof}
For any measurable set $A\subseteq\rset^q$, we have
\[\Pi(A\vert \D) = \frac{\int_{A} \exp\left(-\frac{n}{2\sigma^2}\PP_n(\Delta_W-\Delta_\star) + \mathcal{R}_W\right)\Pi_0(\rmd W)}{\int_{\rset^q} \exp\left(-\frac{n}{2\sigma^2}\PP_n(\Delta_W-\Delta_\star) + \mathcal{R}_W\right)\Pi_0(\rmd W)}.\]
By Lemma \ref{lem:lb}, we can find an absolute constant $C_0$, such that for all $n$ large enough, and with 
\[\beta \eqdef  \frac{n\epsilon_0^2}{2\sigma^2},\]
\begin{multline*}
\PE\left[\Pi\left(A\vert \D\right)\right] \leq 2e^{-n\epsilon_0^2/(2\sigma^2)} +   \frac{e^{C0\beta}}{C_0}\PE\left[\int_{A} \exp\left(-\frac{n}{2\sigma^2}\PP_n(\Delta_W-\Delta_\star) + \mathcal{R}_W\right)\Pi_0(\rmd W)\right].\end{multline*}
By Fubini's theorem, the expectation on the right hand side of the last display is
\[\int_{A} e^{\mathcal{R}_W} \PE\left[\exp\left(-\frac{n}{2\sigma^2}\PP_n(\Delta_W-\Delta_\star)\right)\right]\Pi_0(\rmd W).\]
By conditioning on ${\bf s}_{1:n}$ we see that the expectation inside the last integral is equal to $1$ for all $W$. Hence,
\begin{equation}\label{lem:good:set:eq1}
\PE\left[\Pi\left(A\vert \D\right)\right] \leq 2e^{-n\epsilon_0^2/(2\sigma^2)}  + \frac{e^{C_0\beta}}{C_0}\;\int_{A} e^{\mathcal{R}_W}\Pi_0(\rmd W).\end{equation}
By definition, we have $\mathcal{R}_W = -\frac{1}{2}\left(\log\det(\Sigma_W) - \log\det(\Sigma_\star)\right) -\lambda {\cal J}(u_W)/2$. For all $W\in\rset^q$, we have
\begin{equation}\label{bound:logdet}
-\frac{1}{2}\left(\log\det(\Sigma_W) - \log\det(\Sigma_\star)\right) \leq \frac{d}{2}\log\left(n\|\Sigma_\star\|_{\mathsf{op}}\right)\leq   C_1\beta,\end{equation}
where the second inequality uses (\ref{tech:cond:1}). We note also that ${\cal J}(u_W)\geq 0$. As a result (\ref{lem:good:set:eq1}) becomes 
\begin{equation}\label{lem:good:set:eq2}
\PE\left[\Pi\left(A\vert \D\right)\right] \leq 2e^{-n\epsilon_0^2/(2\sigma^2)}  + \frac{e^{C_0 \beta}}{C_0}\;\int_{A} e^{-\frac{\lambda}{2}{\cal J}(u_W)}\Pi_0(\rmd W),\end{equation}
for some possibly different absolute constant $C_0$. We apply this with $A = \W_0(Cs_0,C)^c = \{W\in\rset^q:\;{\cal J}(u_W)> 2C\beta/\lambda\}\cup[\W_0(s)^c\cap \{W\in\rset^q:\;{\cal J}(u_W)\leq  2C\beta/\lambda\}]$. For $C\geq 1+C_0$,
\[\PE\left[\Pi\left(\W(s)^c\vert \D\right)\right]
\leq 2e^{-n\epsilon_0^2/(2\sigma^2)} + \frac{1}{C_0}e^{-n\epsilon_0^2/(2\sigma^2)} + \frac{e^{C_0 \beta}}{C_0}\;\int_{\W(s)^c} \Pi_0(\rmd W),\]
where $\W(s) = \{W\in\rset^q:\;\|W\|_0\leq s,\;\mbox{ and }\; \|W\|_\infty \leq \tau_s\}$. Lemma \ref{lem:prior}-(1) shows that $\Pi_0(\W(s)^c) \leq 4\exp(-\mathsf{u}s\log(q))$. As a result, by taking  $s = C s_0$ with $C \geq C_0 +1$, we have 
\[\mathsf{u}s\log(q) = C\mathsf{u}s_0\log(q) \geq \frac{C n\epsilon_0^2}{2\sigma^2} \geq \frac{(C_0+1)n\epsilon_0^2}{2\sigma^2},\]
and we conclude that
\[\PE\left[\Pi\left(\W_0(s)^c\vert \D\right)\right]
\leq 2e^{-n\epsilon_0^2/(2\sigma^2)} + \frac{1}{C_0}e^{-n\epsilon_0^2/(2\sigma^2)} \leq C \times e^{-n\epsilon_0^2/(2\sigma^2)},\]
for some absolute constant $C$. 

\end{proof}

\subsection{Deviation bounds}\label{sec:emp:proc}
The last ingredient of the proof is a concentration inequality for the empirical process of the log-likelihood ratio that we derive next. The proof is based on Theorem 5.11 of \cite{geer2000empirical} that we first present. For any random variable $X$, its $\kappa$-Bernstein norm $\rho_\kappa$ is defined as: 
\begin{equation}
\label{eq:bernstein_norm}
\rho_\kappa(X) \eqdef \sqrt{2\kappa^2 \bbE\left[e^{\frac{|X|}{\kappa}} - 1 - \frac{|X|}{\kappa}\right]} \,.
\end{equation}
We recall that we say a random variable satisfies Bernstein condition with parameter $(K, R)$ if: 
$$
\bbE[|X|^m] \le \frac12 m! K^{m-2} R^2 \ \ \forall \ \ m = 2, 3, \dots 
$$
It is immediate that if a random variable $X$ satisfies Bernstein condition with parameters $(K, R)$ then $\rho_{2K}(X) \le \sqrt{2}R$. Let $X,X_{1:n}\stackrel{i.i.d.}{\sim} P$, $\PP_n$ their empirical measure.  Let $\cG$ be a collection of real-valued functions, where $g(X)$ satisfies the Bernstein condition with parameters $(K, R)$ for all $g \in \cG$. Let $H_{B, 2K}(u, \cG, P)$ denote the bracketing entropy of $\cG$ with respect to the pseudo-metric $\rho_{2K}$.

\begin{theorem}[Theorem 5.11 of \cite{geer2000empirical}]
\label{thm:vdg}
Suppose that $g(X)$ satisfies the Bernstein condition with parameters $(K, R)$ for all $g \in \cG$. Then there  exists a universal constant $C$ such that for any $a,C_0,C_1$ satisfying: 
\begin{equation}\label{thm:vdg:cond}
C_0\left(\max\left\{\int_0^{R}\sqrt{H_{B, 2K}(u, \cG, P)} \ du, \ R \right\}\right) \le a \le \min\left\{\frac{C_1 \sqrt{n}R^2}{2K}, 8 \sqrt{n}R\right\}
\end{equation}
and $C^2 \le C_0^2/(C_1 + 1)$: 
$$
\bbP\left(\sup_{g \in \cG}\left|\sqrt{n}(\bbP_n - P)g\right| \ge a\right) \le Ce^{- \frac{a^2}{C^2 (C_1 + 1)R^2}} \,.
$$
\end{theorem}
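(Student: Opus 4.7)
The plan is to prove the statement by a bracketing chaining argument, combining a scale-dependent Bernstein inequality with a union bound over a nested sequence of bracketing covers of $\cG$.

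First I would prove a one-function deviation bound. For any fixed centered $g$ with $\rho_{2K}(g) \le \rho$, the Bernstein moment inequality $\PE[|g(X)|^m] \le \tfrac12 m! K^{m-2}\rho^2$ yields the classical Bernstein tail bound
\begin{equation*}
\PP\Bigl(\sqrt n\,|(\PP_n - P)g| \ge t\Bigr) \le 2\exp\!\left(-\frac{t^2}{2\rho^2 + 2tK/\sqrt n}\right),
\end{equation*}
which is sub-Gaussian in the regime $tK/\sqrt n \le \rho^2$. The upper bound $a \le C_1\sqrt n\,R^2/(2K)$ in (\ref{thm:vdg:cond}) is exactly what will keep every chaining increment inside this sub-Gaussian regime.

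Next I would set up the chain. Let $\varepsilon_k = R\,2^{-k}$ and let $\mathcal N_k$ be a minimal $\varepsilon_k$-bracketing of $\cG$ in $\rho_{2K}$, so that $\log |\mathcal N_k| \le H_{B,2K}(\varepsilon_k,\cG,P)$. For each $g \in \cG$ let $g^{(k)}$ denote the lower endpoint of the bracket chosen to cover $g$; then $\rho_{2K}(g - g^{(k)}) \le \varepsilon_k$ and $\rho_{2K}(g^{(k+1)} - g^{(k)}) \le 2\varepsilon_k$. Starting the chain at $g^{(0)}\equiv 0$ (a single element) accounts for the $R$ term inside the $\max$ on the left of (\ref{thm:vdg:cond}). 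Telescoping $g = \sum_{k\ge 0}(g^{(k+1)} - g^{(k)})$, applying the one-function bound to each increment, and union-bounding over the at most $|\mathcal N_k|\cdot|\mathcal N_{k+1}|$ distinct differences at level $k$, gives
\begin{equation*}
\PP\Bigl(\sup_g \sqrt n\,|(\PP_n - P)(g^{(k+1)} - g^{(k)})| \ge a_k\Bigr) \le 2\,e^{2H_{B,2K}(\varepsilon_{k+1},\cG,P)}\exp\!\left(-\frac{a_k^2}{C(\varepsilon_k^2 + a_k K/\sqrt n)}\right).
\end{equation*}
The standard tuning $a_k = c\,\varepsilon_k\sqrt{H_{B,2K}(\varepsilon_{k+1},\cG,P) + s}$, with $s \propto a^2/R^2$, makes the exponent beat the entropy contribution and leaves an $\exp(-s)$-type tail at each level. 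Since $\sum_k \varepsilon_k\sqrt{H_{B,2K}(\varepsilon_k,\cG,P)}$ is proportional to $\int_0^R \sqrt{H_{B,2K}(u,\cG,P)}\,du$, the lower bound on $a$ in (\ref{thm:vdg:cond}) ensures $\sum_k a_k \le a$, and summing the increment-level deviations recovers the stated tail $Ce^{-a^2/(C^2(C_1+1)R^2)}$.

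The main obstacle will be the bookkeeping of constants. Every increment-level Bernstein bound must remain in its sub-Gaussian regime, which requires $a_k K/\sqrt n \lesssim \varepsilon_k^2$ uniformly in $k$; this is precisely what the upper bounds $a \le C_1\sqrt n R^2/(2K)$ and $a \le 8\sqrt n R$ enforce, the latter controlling the coarsest scales where $\varepsilon_k^2$ is largest and the quadratic Bernstein term dominates. Matching the final exponent $a^2/(C^2(C_1+1)R^2)$ to the hypothesis $C^2 \le C_0^2/(C_1+1)$ is then a careful but mechanical constant-tracking exercise, and is essentially the only place in the proof where slippage could occur.
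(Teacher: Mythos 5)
This statement is not proved in the paper at all: it is quoted verbatim (as Theorem 5.11 of van de Geer, \emph{Empirical Processes in M-Estimation}) and used as a black box, so there is no in-paper argument to compare against. Your sketch reconstructs essentially the standard proof from that reference: a Bernstein-type exponential inequality for a single link measured in the generalized Bernstein norm $\rho_{2K}$, a multiscale bracketing net at scales $R2^{-k}$, a union bound over link increments with thresholds $a_k \propto \varepsilon_k\sqrt{H_{B,2K}(\varepsilon_{k+1},\cG,P)+s}$, the entropy-integral condition guaranteeing $\sum_k a_k \le a$, and the two upper bounds on $a$ keeping every link in the sub-Gaussian regime. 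That is the right architecture and the right role for every hypothesis in \eqref{thm:vdg:cond}.

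The one step you should not wave through is the telescoping identity $g=\sum_{k\ge0}(g^{(k+1)}-g^{(k)})$. With \emph{bracketing} (as opposed to pointwise covering), $g$ and its level-$k$ representative need not be close pointwise, the series converges to $g$ only in the bracketing pseudo-metric, and an infinite chain over an uncountable class is not legitimate. The standard repair, which is what the cited proof actually does, is to stop the chain at a finite level $k^*$ (chosen so that $\varepsilon_{k^*}$ is of order $a/\sqrt{n}$, or adaptively per function) and to control the remainder one-sidedly through the bracket: if $g_L^{(k^*)}\le g\le g_U^{(k^*)}$ with $\Delta_{k^*}=g_U^{(k^*)}-g_L^{(k^*)}$, then $(\PP_n-P)(g-g^{(k^*)})\le(\PP_n-P)\Delta_{k^*}+2P\Delta_{k^*}$, and both terms are absorbed at the finest scale. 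With that modification (and the adaptive truncation needed because the increments are only Bernstein, not bounded), your outline is the proof of the quoted theorem; the remaining work is, as you say, constant bookkeeping.
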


Given $s\geq 0$, we recall the definition  
\[\W(s)=\left\{W\in\W:\;\|W\|_0\leq s,\;\; \|W\|_\infty\leq \tau_s\right\},\;\mbox{ where }\; \tau_s = \sqrt{(2+\mathsf{u})(1+s)\log(q)}.\]
We also recall that
\[\Delta_W(y,{\bf s}) \eqdef \left(y-u_W({\bf s})\right)^2,\;\; \Delta_\star(y,{\bf s}) \eqdef \left(y-u_\star({\bf s})\right)^2,\;\;(y,{\bf s})\in\rset\times\Omega.\]

We use Theorem \ref{thm:vdg} to obtain the following.

\begin{lemma}\label{lem:dev:bound}
Assume H\ref{H1}-H\ref{H:lip}. Fix $s\geq 1$,  and let $\F_s$, $V_1=V_1(s)$ and $V_2=V_2(s)\geq 6b$ be as in H\ref{H:lip}. There exists an absolute constant $M_0\geq 1$ such that the following holds. For all $M\geq M_0$, $A_M\eqdef Mb\sqrt{V_1\log(V_2\sqrt{n})}$, and $\delta>0$ such that  
\[\frac{2\sqrt{2}}{\sqrt{n}}\leq \frac{A_M}{\sqrt{n}} \leq \delta \leq 2b,\]
we have
\[\PP\left[\sup_{W\in\W(s):\; |u_W - u_\star|_2\leq \delta}\; \;\left|\sqrt{n}(\PP_n-\PP)(\Delta_W - \Delta_\star)\right| > \frac{A_M \delta}{2}\left(1 + \frac{\sigma}{b}\right)\right] \leq C e^{-\frac{A_M^2}{C b^2}},\]
for some absolute constant $C$.
\end{lemma}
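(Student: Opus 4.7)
My plan is to apply Theorem \ref{thm:vdg} to the empirical process indexed by
$$\G_\delta \eqdef \bigl\{g_W = \Delta_W - \Delta_\star :\; u_W \in \F_s,\; |u_W - u_\star|_2 \le \delta\bigr\}.$$
The starting point is the decomposition
$$g_W(y,s) = h_W(s)^2 + 2\sigma\xi\, h_W(s),\quad h_W \eqdef u_\star - u_W,\quad \xi \eqdef (y - u_\star(s))/\sigma,$$
where $\xi\sim \gauss(0,1)$ under $\PP$ by H\ref{H1}. The deterministic factor satisfies $|h_W|_\infty\le b$ (by H\ref{H:entropy}) and $|h_W|_2\le \delta$, while the Gaussian moment bound $\PE[|\xi|^m]\le m!$ for $m\ge 2$ controls the noise.

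\textbf{Step 1: Bernstein condition.} From $|g_W|^m \le 2^{m-1}|h_W|^m(b^m + (2\sigma|\xi|)^m)$ and $\PE[|h_W(s)|^m]\le b^{m-2}|h_W|_2^2\le b^{m-2}\delta^2$ I obtain
$$\PE[|g_W|^m] \le 2^{m-1}b^{m-2}\delta^2\bigl(b^m + (2\sigma)^m m!\bigr),$$
which re-groups into the Bernstein form $\tfrac{m!}{2}K^{m-2}R^2$ with $K \asymp b+\sigma$ and $R \asymp (b+\sigma)\delta$ (absolute constants), so $\rho_{2K}(g_W)\le \sqrt 2 R$.

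\textbf{Step 2: Bracketing entropy.} Given an $\epsilon$-net $\{u_k\}$ of $\F_s$ in $\|\cdot\|_\infty$ of cardinality at most $\N(\epsilon,\F_s,\|\cdot\|_\infty)$, every $g_W$ with $|u_W - u_k|_\infty \le \epsilon$ satisfies
$$|g_W - g_k| \le |h_W^2 - h_k^2| + 2\sigma|\xi|\,|h_W - h_k|\le 2b\epsilon + \epsilon^2 + 2\sigma\epsilon|\xi| \eqdef D_k,$$
so $[g_k - D_k, g_k + D_k]$ is a bracket for $\G_\delta$. Computing moments of $2D_k$ exactly as in Step 1 (with $\delta$ replaced by $\epsilon$ and $b$ by $2b$) and feeding the resulting $(K',R')$ into $\rho_\kappa(X)^2 \le 2R^{\prime 2}/(1 - K'/\kappa)$ gives $\rho_{2K}(2D_k) \le c_1(b+\sigma)\epsilon$, hence by H\ref{H:entropy}
$$H_{B,2K}(u,\G_\delta,P) \le V_1\log\bigl(c_2(b+\sigma)V_2/u\bigr).$$

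\textbf{Step 3: Verify the hypotheses of Theorem \ref{thm:vdg} and apply.} Pick $a = \tfrac12 A_M\delta(1 + \sigma/b)$ and $R = c_0(b+\sigma)\delta$. The lower bound $\delta \ge A_M/\sqrt n$ (with $A_M\ge 1$) forces $1/\delta \le \sqrt n$, so $\log(V_2/\delta)\le \log(V_2\sqrt n)$, and the entropy integral obeys
$$\int_0^R \sqrt{H_{B,2K}(u,\G_\delta,P)}\,du \le C R\sqrt{V_1\log(V_2\sqrt n)},$$
which is bounded above by $a/C_0$ for $M\ge M_0$, discharging the lower bound on $a$ in (\ref{thm:vdg:cond}). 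The upper bounds $a\le 8\sqrt n R$ and $a\le C_1\sqrt n R^2/(2K)$ reduce to $M\sqrt{V_1\log(V_2\sqrt n)} \lesssim b\sqrt n\delta$, again a consequence of $\delta\ge A_M/\sqrt n$ and $b\ge 3$. Theorem \ref{thm:vdg} then yields probability at most $C\exp(-a^2/(C^2(C_1+1)R^2))$, and substituting $a^2/R^2 = M^2V_1\log(V_2\sqrt n)/4 = A_M^2/(4b^2)$ produces the stated tail bound.

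The main obstacle is Step 2: the $\|\cdot\|_\infty$-cover of $\F_s$ only controls the deterministic factor $h_W$, but the bracket widths $D_k$ inherit the heavy-tailed noise factor $|\xi|$ from the $2\sigma\xi h_W$ term in $g_W$. The bracket widths therefore contain a subgaussian component, and one must verify that their Bernstein norm still scales linearly in $\epsilon$ with constant only of order $b+\sigma$; this is exactly where the exponential-moment control $\PE[e^{t|\xi|}]\le 2e^{t^2/2}$ is needed to keep $\rho_{2K}(D_k)$ of the correct order rather than blowing up.
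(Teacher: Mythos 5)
Your overall route is the same as the paper's: the identical decomposition $\Delta_W-\Delta_\star=(u_\star-u_W)^2+2\sigma\xi(u_\star-u_W)$, the same key tool (Theorem \ref{thm:vdg}), an $L_\infty$-cover of the function class converted into brackets, and in particular the same device of brackets with random width proportional to $|\xi|$ for the noise term. The only organizational difference is that the paper splits the supremum into two empirical processes $Z_{n,1}$ and $Z_{n,2}$ and applies Theorem \ref{thm:vdg} to each with homogeneous Bernstein parameters ($(b^2,b\delta)$ for the squared term and $(b,\delta)$ for the cross term), whereas you keep the two pieces together and apply the theorem once to the combined class. That is a legitimate variant, and since the final exponent depends only on $a^2/R^2$ it delivers the same tail bound.

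There is, however, one concrete error in Step 1 that must be repaired before Theorem \ref{thm:vdg} can be invoked: the Bernstein scale $K$ cannot be of order $b+\sigma$. The quadratic part alone forces $K\asymp b^2$: from $\PE[|h_W|^{2m}]\le b^{2m-2}\delta^2$ one sees, already at $m=3$, that the Bernstein inequality requires $K\gtrsim b^4\delta^2/R^2\asymp b^4/(b+\sigma)^2$, which is of order $b^2$ when $\sigma\lesssim b$. Equivalently, with $\kappa=2K\asymp b+\sigma$ the norm $\rho_{\kappa}(h_W^2)^2$ involves the series $\sum_{m\ge 2}(b^2/\kappa)^{m-2}/m!\asymp e^{b^2/\kappa}(\kappa/b^2)^2$, which is exponentially large in $b$ rather than $O(1)$, so the claimed bound $\rho_{2K}(g_W)\le\sqrt{2}R$ with $R\asymp(b+\sigma)\delta$ fails for large $b$. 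The correct combined parameters are $K\asymp b(b+\sigma)$ (i.e.\ $\max(b^2,\sigma b)$ up to constants) together with $R\asymp(b+\sigma)\delta$; the same correction is needed in Step 2 so that the ratio $K'/(2K)$ stays bounded away from $1$ for all bracket scales up to $R$. With this fix the conditions in (\ref{thm:vdg:cond}) still reduce to $A_M\lesssim\sqrt{n}\,\delta$, which is your hypothesis, and the exponent $a^2/R^2\asymp A_M^2/b^2$ is unchanged, so the stated conclusion follows.
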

\begin{proof}
We set
$$
Z_n(\delta) \eqdef \sup_{W\in\W(s):\; |u_W - u_\star|_2\leq \delta}\;\; \left|\sqrt{n}(\PP_n-\PP)(\Delta_W - \Delta_\star)\right| \,.
$$
Recall that, by definition of $Y$, we have $Y_i  = u_\star({\bf s}_i) + \sigma \xi_i$ where $\xi_i \sim \mathbb{N}(0, 1)$, $(\xi_i,{\bf s}_i)$ are independent, and $(\xi_i,{\bf s}_i)\sim\PP$. Therefore, we can simplify the difference $(\Delta_W - \Delta_\star)(\xi,{\bf s})$ as: 
$$
(\Delta_W - \Delta_\star)(\xi,{\bf s}) = (u_\star({\bf s}) - u_W({\bf s}))^2 + 2\sigma \xi(u_\star({\bf s}) - u_W({\bf s})) \,,
$$
and consequently, we can write: 
\begin{align*}
Z_n(\delta) & \le \sup_{W\in\W(s):\; |u_W - u_\star|_2\leq \delta}\; \left|\sqrt{n}(\PP_n-\PP)(u_\star({\bf s}) - u_W({\bf s}))^2 \right| \\
& \qquad \qquad +  2\sigma \sup_{W\in\W(s):\; |u_W - u_\star|_2\leq \delta}\; \left|\sqrt{n}(\PP_n-\PP)(\xi u_\star({\bf s}) - \xi u_W({\bf s}))\right| \\
& = Z_{n, 1}(\delta) + 2\sigma Z_{n, 2}(\delta) \, ,
\end{align*}
where $Z_{n, 1}(\delta)$ and $Z_{n, 2}(\delta) $ can also be written as 
$$Z_{n,1}(\delta) \eqdef \sup_{h\in\mathcal{H}_\delta}\;\left|\frac{1}{\sqrt{n}}\sum_{i=1}^n(h({\bf s}_i) - \PP(h))\right|,\;\;\;\mbox{ and }\;\;\; Z_{n, 2}(\delta) \eqdef  \sup_{f \in \bar\cF_{\delta}}\left|\frac{1}{\sqrt{n}}\sum_{i = 1}^n f(\xi_i,X_i)\right|,$$
where $\mathcal{H}_\delta\eqdef \{f^2,\;f\in\cF_\delta\}$, and where $\cF_{\delta} = \{u_W - u_\star: W\in\W(s),\; |u_W - u_\star|_2 \le \delta\}$, and $\bar\cF_{\delta}=\{(\xi,x)\mapsto \xi f(x),\;f\in\cF_\delta\}$.

\paragraph{\texttt{(A) Deviation bound for $Z_{n, 1}(\delta)$}}\; 
By H\ref{H:lip}-(1), $|h|_\infty \leq b^2$ for all $h\in\mathcal{H}_\delta$. Therefore for $f\in\cF_\delta$, and $m\geq 2$, we have: 
$$
\bbE[|f^2(X)|^m] \le (b^2)^{m-2}\bbE[|f^2(X)|^2] \le (b^2)^{m-2}(b\delta)^2 \le \frac12 m!  (b^2)^{m-2}(b\delta)^2 \,.
$$
Therefore the function class $\mathcal{H}_{\delta}$ satisfies the Bernstein condition with parameter $(b^2, b\delta)$. The bracketing number of $\cH_{\delta}$ satisfies 
\begin{equation}\label{bound:bracket:2}
\cH_{B, 2b^2}(u\sqrt{2}, \cH_{\delta}, \PP) \le \log\mathcal{N}(u/2b, \cF_{\delta}, L_\infty) \,,
\end{equation}
for all $0<u<b^2$. To see this fix some $u > 0$ and let $\{f_j\}_{1 \le j \le N}$ be $(u/2b)$-cover of $\cF_{\delta}$ with respect to $L_\infty$ norm. Therefore, for any $f \in \cF_{\delta}$ there exists $1 \le j \le N$ such that $\|f - f_j\|_\infty \le u/2b$. Consequently, we have $\|f^2 - f_j^2\|_\infty \le 2b (u/2b) = u$. Now consider the collection of brackets $\{(f_j^2 - u/2, f_j^2 + u/2)\}_{1 \le j \le N}$.  For this collection we have, $\rho_{2b^2}^2 \left(f_j^2 + \frac{u}{2} - f_j^2 + \frac{u}{2}\right)  = \rho_{2b^2}^2(u) \leq (\sqrt{2}u)^2$, for $u\leq b^2$. Hence we have established (\ref{bound:bracket:2}). 

We can then apply Theorem \ref{thm:vdg} with  $a= A_M\delta/2$, $R=b\delta$, $K=b^2$, $C_1=1$, $C_0^2=2C$, where $C$ is the absolute constant in Theorem \ref{thm:vdg}. The condition $a\geq C_0 R$ is satisfied with $M\geq M_0\geq 2C_0$. For $\delta\leq 2b$, using H\ref{H:lip}-(4),
\begin{align*}
\int_0^{b\delta}\sqrt{\cH_{B, 2b^2}(u, \cH_{\delta}, \PP)}\rmd u & = \sqrt{2}\int_0^{b\delta/\sqrt{2}}\sqrt{\cH_{B, 2b^2}(u\sqrt{2}, \cH_{\delta}, \PP)} \ du \\
& \le  \sqrt{2} \int_0^{b\delta/ \sqrt{2}}\sqrt{\log{\cN(u/2b, \cF_\delta, L_\infty)}} \ du  \\
& \le 2b\sqrt{2V_1} \int_0^{\delta/2\sqrt{2}} \sqrt{\log{\left(\frac{V_2}{u}\right)}} \ du \\
& \le 2b\delta\sqrt{V_1}  \sqrt{\log{\left(\frac{2\sqrt{2}V_2}{\delta}\right)}}  \leq  2b\delta  \sqrt{V_1\log{\left(V_2\sqrt{n} \right)}},
\end{align*}
where the inequality before last follows from the following fact: 
$$
\int_0^c \sqrt{\log{\left(\frac{C}{u}\right)}} \ du \le 2c\sqrt{\log{\left(\frac{C}{c}\right)}} \ \ \text{as soon as } \log{(C/c)} \ge 2 \,,
$$
which in our case is implied by $V_2\geq 6b$. The last inequality uses $\delta\geq 2\sqrt{2}/\sqrt{n}$.

Therefore the condition $a \geq C_0 \int_0^{b\delta}\sqrt{\cH_{B, 2b^2}(u, \cH_{\delta}, \PP)}\rmd u$ is satisfied with $M \geq M_0 \geq 4C_0$. The condition $a \leq C_1\sqrt{n}R^2/(2K)$ boils down to $A_M/\sqrt{n}\leq \delta$, whereas the condition $a\leq 8R\sqrt{n}$ boils down to $A_M/\sqrt{n} \leq 16b$ which holds by assumption. Hence by Theorem \ref{thm:vdg},
\[\PP\left(Z_{n,1}(\delta)> \frac{A_M\delta}{2}\right) \leq Ce^{-\frac{A_M^2}{C b^2}},\]
for some absolute constant $C$.

\paragraph{\texttt{(B) Deviation bound for $Z_{n, 2}(\delta)$}}\; The argument is similar.  Note that, for any $u_W - u_\star \in \cF_{\delta}$, $|u_W -u_\star|_2 \leq  \delta$, and since the function $u_W -u_\star$ are bounded by $b$ as assumed in H\ref{H:lip}, we have:
\begin{align*}
\bbE[|\xi f(X)|^m] & = \bbE[|\xi|^m]\bbE[|u_W(X) - u_\star(X)|^m] \\
& \le \bbE[|\xi|^m] b^{m-2} \delta^2\\
& \le \frac12 m! b^{m-2} \delta^2 \,,
\end{align*}
where the last inequality follows from the centered absolute moment of Gaussian random variables. Therefore, all $f\in\bar\F_\delta$ satisfies the Bernstein condition with parameters $(b, \delta)$. 

The bracketing entropy satisfies,
\begin{equation}
\label{bound:bracket:1}
\cH_{B, 2b}(u\sqrt{2}, \bar\cF_{\delta}, \PP) \le \log{\cN(u/2, \cF_{\delta}, L_\infty)} \,\;\; 0 < u \le \delta\,.
\end{equation}
To establish (\ref{bound:bracket:1}), fix $u\in (0,b]$, and let $\{f_1, \dots, f_N \}$ be a $u/2$ cover of $\cF_{\delta}$ with respect to $L_\infty$ norm, i.e. 
$$
\sup_{f \in \cF_\delta} \min_{1 \le j \le N} \|f  - f_j\|_\infty \le \frac{u}{2} \,.
$$ 
Now consider the brackets $\{(\xi,x)\mapsto (\xi f_j(x) -u|\xi|/2, \xi f_j(x) +u|\xi|/2),\;1\leq j\leq N\}$.  For any $f$, there exists $f_j$ such that $\|f - f_j\|_\infty \le u/2$ due to the property of the covering set, and it follows that 
$$
\xi f_j(x) - \frac{u |\xi|}{2} \le \xi f(x) \le \xi f_j(x) + \frac{u |\xi|}{2} \ \ \forall \ \ x \,.
$$
Furthermore, for any $1 \le j \le N$, since $\PE(|u\xi|^m)\leq u^mm!/2\leq b^{m-2}u^2m!/2$, 
\[ \rho_{2b}^2 \left(\xi f_j(x) +u|\xi|/2 - \xi f_j(x)  + u|\xi|/2\right) = \rho_{2b}^2 \left(u |\xi|\right) \leq (\sqrt{2}u)^2,\]
which implies (\ref{bound:bracket:1}).

Hence we can apply Theorem \ref{thm:vdg} with $a=A_M\delta/2b$, $R=\delta$, $K=b$, $C_1=1$, $C_0^2=2C$, where $C$ is the absolute constant in Theorem \ref{thm:vdg}. All the conditions of Theorem \ref{thm:vdg} can be checked as we did in bounding $Z_{n,1}(\delta)$. 
We conclude that
\[\PP\left(Z_{n,2}(\delta)> \frac{A_M \delta}{2b}\right) \leq Ce^{-\frac{A_M^2}{C b^2}},\]
for some absolute constant $C$. Combining the two bounds, we get
\[\PP\left(Z_{n}(\delta) > \frac{A_M \delta}{2}\left(1 + \frac{\sigma}{b}\right)\right) \leq Ce^{-\frac{A_M^2}{C b^2}},\]
for some absolute constant $C$, which implies the stated result.
\end{proof}

\subsection{Finishing the proof}\label{sec:finish:proof:thm:margW}

\begin{proof}
Let $s_0,\epsilon_0$, and $W_0$ be as in H\ref{H:approx}. Let $s=C s_0$ as in (\ref{choice:s}).  We set $\tau_s=\sqrt{(2+\mathsf{u})(1+s)\log(q)}$, 
\[\W_0(s) \eqdef \left\{W\in\rset^q:\;\|W\|_0\leq s,\;\|W\|_\infty \leq \tau_s,\;{\cal J}(u_W) \leq \frac{C n\epsilon_0^2}{\lambda\sigma^2}\right\}.\]

We set  $\r \eqdef 2M_0(b+\sigma)\sqrt{V_1\log(V_2\sqrt{n})/n}$, where $M_0$ is as in Lemma \ref{lem:dev:bound}.  For $j\geq 1$, we set $A_j \eqdef bM_0 j\sqrt{V_1\log(V_2\sqrt{n})}$, $\r_j \eqdef j\r$. With $C$ as in Lemma \ref{lem:lb}, we set  
\[\beta \eqdef \frac{C}{\sigma^2}n\epsilon_0^2,\]
and define 
\begin{multline*}
\e\eqdef\left\{\D:\;\int_{\rset^q} \exp\left(-\frac{n}{2\sigma^2}\PP_n(\Delta_W-\Delta_\star) + \mathcal{R}_W\right)\Pi_0(\rmd W)
\leq C e^{-\beta},\;\;\mbox{ or }\; \right.\\
\left. \sup_{W\in\W(s):\; \r_j<|u_W-u_\star|_2\leq\r_{j+1}}\; \left|\sqrt{n}(\PP_n-\PP)(\Delta_W - \Delta_\star)\right| > \frac{A_{j+1} \r_{j+1}}{2}\left(1 + \frac{\sigma}{b}\right) \;\;\mbox{ for some }\; j\geq 1\right\}.
\end{multline*}
By Lemma \ref{lem:lb} and Lemma \ref{lem:dev:bound}, and for all $n$ large enough
\begin{equation}\label{prob:bad:set}
\PP(\D\in \e) \leq 2e^{-n\epsilon_0^2/(2\sigma^2)} + C\sum_{j\geq 1} e^{-A_j^2/(Cb^2)} \leq 2e^{-n\epsilon_0^2/(2\sigma^2)} +Ce^{-V_1\log(V_2\sqrt{n})/C},\end{equation}
for some absolute constant $C$.

We set $B\eqdef\{W\in\rset^q:\; |u_W-u_\star|_2 >  \r\}$. We can write  $B\cap\W(s)$ as $\cup_{j\geq 1} B_j$, where 
\[B_j\eqdef\left\{W\in\W(s):\;M j \r < |u_W-u_\star|_2 \leq M(j+1)\r\right\}.\]
We should point out that the union $\cup_{j\geq 1} B_j$ is over a finite number of terms since $B_j = \emptyset$ for $Mj\r\geq b$.  Since $\Pi(B\vert\D) \leq \Pi(\W_0(s)^c\vert\D) + \textbf{1}_\e(\D) + \textbf{1}_{\e^c}(\D)\Pi(B\cap\W_0(s)\vert\D)$, taking expectation on both sides, and using Lemma \ref{lem:good:set}, the definition of $\e$ and  (\ref{prob:bad:set}), yields for all $n$ large enough,
\begin{multline*}
\PE\left[\Pi(B\vert\D)\right] \leq C e^{-n\epsilon_0^2/(2\sigma^2)} + Ce^{-V_1\log(V_2\sqrt{n})/C} \\
+ \frac{e^{\beta}}{C}\sum_{j\geq 1} \PE\left[\textbf{1}_{\e^c}(\D) \;\int_{B_j} \exp\left(-\frac{n}{2\sigma^2}\PP_n(\Delta_W-\Delta_\star) + \mathcal{R}_W\right)\Pi_0(\rmd W)\right],
\end{multline*}
for some absolute constant $C$. We have
\begin{multline*}
\frac{n}{2\sigma^2}\PP_n(\Delta_W-\Delta_\star) + \mathcal{R}_W = -\frac{n}{2\sigma^2}\varrho(u_W,u_\star) -\frac{n}{2\sigma^2}(\mathbb{P}_n-\mathbb{P})(\Delta_W - \Delta_\star) + \mathcal{R}_W',
\end{multline*}
where 
\[\mathcal{R}_W' = -\frac{1}{2}\left(\log\det\Sigma_\star - \log\det\Sigma_W\right),\]
and satisfies, as we show in the proof of Lemma \ref{lem:good:set}
\[\left|\mathcal{R}_W'\right| \leq  \frac{d}{2}\log\left(\frac{n\|\Sigma_\star\|_{\textsf{op}}}{\sigma^2}\right) \leq C_0\log(q),\]
where the second inequality is our assumption (\ref{tech:cond:1}). Hence, we can find an absolute constant $C$ such that for all $n$ large enough
\begin{multline}\label{proof:thm:margW:eq1}
\PE\left[\Pi(B\vert\D)\right] \leq C e^{-n\epsilon_0^2/(2\sigma^2)} + Ce^{-V_1\log(V_2\sqrt{n})/C} \\
+ \frac{e^{\beta}}{C}\sum_{j\geq 1} \PE\left[\textbf{1}_{\e^c}(\D) \;\int_{B_j} \exp\left(-\frac{n}{2\sigma^2}\varrho(u_W,u_\star) - \frac{n}{2\sigma^2}(\PP_n-\PP)(\Delta_W-\Delta_\star)\right)\Pi_0(\rmd W)\right].
\end{multline}
For $W\in B_j$, and $\D\notin\e$, the expression inside the exponential in the last display is bounded from above by
\[-\frac{n\r_j^2}{2\sigma^2} + \frac{\sqrt{n}A_{j+1} \r_{j+1}}{2\sigma^2}\left(1+ \frac{\sigma}{b}\right) \leq -\frac{n\r_j^2}{4\sigma^2},\]
with the choice of $\r$. Since
\[\sum_{j\geq 1} e^{-\frac{n\r_j^2}{4\sigma^2}} \leq C e^{-\frac{n\r^2}{4\sigma^2}},\]
for some absolute constant $C$, we conclude that for all $n$ large enough,
\[ \PE\left[\Pi(B\vert\D)\right] \leq C \left(e^{-n\epsilon_0^2/(2\sigma^2)}  + e^{-V_1\log(V_2\sqrt{n})/C} + e^{-n\r^2/(4\sigma^2)}\right).\]
Hence the theorem.
\end{proof}

\subsection{Proof of Theorem \ref{thm:marg:theta}}\label{sec:proof:thm:marg:theta}
\begin{proof}
First, a simple coupling argument shows that
\begin{equation}\label{thm:marg:theta:eq1}
\mathsf{W}_2^2(\Pi^{(\theta)}, \Pi^{(\theta)}_\star) \leq \int_{\rset^q} \mathsf{W}_2^2\left(\textbf{N}(\hat\theta_W,\frac{1}{\lambda}\Sigma_W^{-1}),\; \textbf{N}(\theta_\star,\frac{1}{\lambda}\Sigma_\star^{-1})\right) \Pi^{(W)}(\rmd W\vert\D).
\end{equation}
We recall (see e.g. \cite{bhatia:etal:19}) that for symmetric and positive definite matrices $\Lambda_1,\Lambda_2$,
\begin{multline*}
\mathsf{W}_2^2\left(\textbf{N}(\mu_1,\Lambda_1),\textbf{N}(\mu_2,\Lambda_2)\right) = \|\mu_2-\mu_1\|_2^2 +\textsf{Tr}\left(\Lambda_1 + \Lambda_2 -2\left(\Lambda_1^{1/2} \Lambda_2\Lambda_1^{1/2}\right)^{1/2}\right)\\
\leq \|\mu_2-\mu_1\|_2^2 + \|\Lambda_1^{1/2} - \Lambda_2^{1/2}\|_{\textsf{F}}^2,
\end{multline*}
where the inequality follows from Theorem 1 of (\cite{bhatia:etal:19}). By the Hemmen-Ando inequality (\cite{ando:hemmen:80}~Proposition 2.1),
\[\|\Lambda_1^{1/2} - \Lambda_2^{1/2}\|_{\textsf{F}} \leq \frac{\|\Lambda_1 - \Lambda_2\|_{\textsf{F}}}{\sqrt{\lambda_{\textsf{min}}(\Lambda_1)} +\sqrt{\lambda_{\textsf{min}}(\Lambda_2)}}.\]
Hence
\begin{equation}\label{W2:gaussian}
\mathsf{W}_2^2\left(\textbf{N}(\mu_1,\Lambda_1),\textbf{N}(\mu_2,\Lambda_2)\right) \leq \|\mu_2-\mu_1\|_2^2 + \frac{\|\Lambda_1 - \Lambda_2\|_{\textsf{F}}^2}{\left(\sqrt{\lambda_{\textsf{min}}(\Lambda_1)} +\sqrt{\lambda_{\textsf{min}}(\Lambda_2)}\right)^2}.\end{equation}
We apply this bound, to conclude that there exists a constant $c$ (that we can take as $d/\lambda_{\textsf{max}}(\Sigma_\star)$ such that for all $W\in\rset^q$,
\[\mathsf{W}_2^2\left(\textbf{N}(\hat\theta_W,\frac{1}{\lambda}\Sigma_W^{-1}),\; \textbf{N}(\theta_\star,\frac{1}{\lambda}\Sigma_\star^{-1})\right) \leq \|\hat\theta_W -\theta_\star\|_2^2 + \frac{c}{\lambda}\|\Sigma_W^{-1}-\Sigma_\star^{-1}\|^2_{\textsf{op}}.\]

We consider the integrand of (\ref{thm:marg:theta:eq1}) under two scenarios.

\paragraph{\underline{\texttt{Case 1: $W$ is such that $|u_W-u_\star|_2\leq M\r$}}} By (\ref{comp:Sigma}), and for $W$ such that $|u_W-u_\star|_2\leq M\r$, for some absolute onstant $C$ we have
\[\|\Sigma_W - \Sigma_\star\|_{\textsf{op}} \leq C\left(|\H u_\star|_2 + (M\r)^\kappa\right) (M\r)^\kappa,\]
for all $n$ large enough. So, since $\lambda_{\textsf{min}}(\Sigma_\star)>C_2$, we conclude that for all such $W$ under consideration, and for all $n$ large enough, $\lambda_{\textsf{min}}(\Sigma_W)>C_2/2$. 

For all $W$, 
\begin{multline*}
(\hat\theta_W - \theta_\star)^\t \H_1 u_W = \hat\theta_W^\t \H_1 u_W - \theta_\star^\t \H_1 u_\star - \theta_\star^\t(\H_1 u_W - \H_1 u_\star) \\
= \hat\theta_W^\t \H_1 u_W  -\H_0 u_W -f + (\H_0u_W - \H_0 u_\star) - \theta_\star^\t(\H_1 u_W - \H_1 u_\star).
\end{multline*}
Therefore
\[|(\hat\theta_W - \theta_\star)^\t \H_1 u_W|_2^2 \leq 2|f - \H_0 u_W -  \hat\theta_W^\t \H_1 u_W|_2^2 + c \times |\H u_W - \H u_\star|_2^2.\]
Since $\hat\theta_W$ minimizes $\theta\mapsto |f - \H_0 u_W-\theta^\t\H_1 u_W|_2 + \|\theta\|_2^2/\lambda$, we have
\begin{multline*}
|f - \H_0 u_W -  \hat\theta_W^\t \H_1 u_W|_2^2 \leq |f - \H_0 u_W -  \theta_\star^\t \H_1 u_W|_2^2 + \frac{\|\theta_\star\|_2^2}{\lambda} \\
\leq c \times |\H u_W - \H u_\star|_2^2 + \frac{\|\theta_\star\|_2^2}{\lambda}.
\end{multline*}
Using the above, and H\ref{H:stability}, we conclude that we can find a constant $c$ such that for $W$ such that  $|u_W - u_\star|_2\leq M\r$, it holds
\[|(\hat\theta_W - \theta_\star)^\t \H_1 u_W|_2^2 \leq c \r^{2\kappa}.\]
However, $|(\hat\theta_W - \theta_\star)^\t \H_1 u_W|_2^2 = (\hat\theta_W - \theta_\star)^\t (\Sigma_W - (1/\lambda)\textbf{1}_d)(\hat\theta_W - \theta_\star)$. And since $\lambda_{\textsf{min}}(\Sigma_W)>C/2$ as seen above, and since $\lambda\to\infty$, as $n\to\infty$, for all $n$ large enough we have
\[|(\hat\theta_W - \theta_\star)^\t \H_1 u_W|_2^2 \geq C \|\hat\theta_W - \theta_\star\|_2^2,\]
for some absolute constant $C$. In conclusion, for $W$ such that  $|u_W - u_\star|_2\leq M\r$, and for all $n$ large enough,
\[\| \hat\theta_W - \theta_\star\|_2^2 \leq c\r^{2\kappa},\]
and
\[\mathsf{W}_2^2\left(\textbf{N}(\hat\theta_W,\frac{1}{\lambda}\Sigma_W^{-1}),\; \textbf{N}(\theta_\star,\frac{1}{\lambda}\Sigma_\star^{-1})\right)\leq c\r^{2\kappa}.\]

\paragraph{\underline{\texttt{Case 2: $W$ is such that $|u_W - u_\star|_2 > M\r_n$}}}
Since the smallest eigenvalue of $\Sigma_W$ is at least $1/\lambda$, there exists a constant $c$ such that for all $W\in\rset^q$, we have
\[\|\Sigma_W^{-1}-\Sigma_\star^{-1}\|_{\textsf{op}} \leq c\lambda.\]
Similarly,
\[\|\hat\theta_W - \theta_\star\|_2 \leq \|\hat\theta_W\|_2 + \|\theta_\star\|_2,\]
and
\[\|\hat\theta_W\|_2^2 = \hat\theta_W^\t\Sigma_W^{1/2}\Sigma_W^{-1}\Sigma_W^{1/2}\hat\theta_W \leq \lambda \hat\theta_W^\t \Sigma_W \hat\theta_W = \lambda \Phi_W^\t \Sigma_W^{-1} \Phi_W \leq \lambda |\bar f_W|_2^2\leq 2\lambda(|f|_2^2 + |\H_0 u_W|_2^2).\]
We deduce that for all $W$ such that $|u_W - u_\star|_2 > M\r$, we have
\[\mathsf{W}_2^2\left(\textbf{N}(\hat\theta_W,\frac{\sigma^2}{\lambda}\Sigma_W^{-1}),\; \textbf{N}(\theta_\star,\frac{\sigma^2}{\lambda}\Sigma_\star^{-1})\right)\leq c \lambda\left(1 +|\H_0 u_W|_2^2\right),\]
for some constant $c$. Given the behaviors of the $\mathsf{W}_2$ distance between $\textbf{N}(\hat\theta_W,\frac{\1}{\lambda}\Sigma_W^{-1})$ and $\textbf{N}(\theta_\star,\frac{1}{\lambda}\Sigma_\star^{-1})$  obtained  in the cases above, we return to (\ref{thm:marg:theta:eq1}) to write
\[\mathsf{W}_2^2(\Pi^{(\theta)}, \Pi^{(\theta)}_\star) \leq c\r^{2\kappa} +  c\lambda \int_{\rset^q}\left(1 + |\H_0 u_W|_2^2\right)\textbf{1}_{\{|u_W - u_\star|_2 > M\r_n\}} \Pi^{(W)}(\rmd W\vert \D).\]
Taking the expectation on both sides and with a similar argument as in the proof of Theorem \ref{thm:margW} yields
\[\PE\left[\mathsf{W}_2^2(\Pi^{(\theta)}, \Pi^{(\theta)}_\star)\right] \leq c\left(\r^{2\kappa} + \lambda e^{-n\epsilon_0^2/(2\sigma^2)}  + \lambda e^{-V_1\log(V_2\sqrt{n})/C}\right),\]
which is the stated bound.
\end{proof}

\subsection{Proof of Theorem \ref{thm:approx:deriv}}\label{sec:proof:thm:approx:deriv}
\begin{proof}
The result is based on a generalization of Markov's polynomial inequality due to \cite{harris:08}~Theorem 1. The result shows that differential operators are bounded operators when restricted to polynomials, and an upper on their norms is given. 

Let $X,Y$ be a real Banach space with norms $\|\cdot\|_X$, and $\|\cdot\|_Y$ respectively. A function $P:X\to Y$ is a homogeneous polynomial of degree $d\geq 0$ if $P(x) = L(x,\cdots,x)$, where $L$ is a linear symmetric map from $\underbrace{X\times \cdots \times X}_{d\mbox{ times }}$ to $Y$ (for $d=0$, these are constant functions on $X$). A function $P:X\to Y$ is a polynomial of degree $d$ is $P = \sum_{j=0}^d P_j$, where $P_j$ is a homogeneous polynomial of degree $j$. 
We let $\wp_d(X,Y)$  be the set of all polynomials $P:\;X\to Y$ with degree at most $d$. When $Y=\rset$, we write $\wp_d(X)$. If $f:X\to Y$ has Frechet derivatives to order $k$ we write $\nabla^{(k)} f(x)$ to denote its $k$-th order Frechet derivative, and for $z\in X$, $\nabla^{(k)} f(x)\cdot z^k\eqdef \nabla^{(k)} f(x)(z,\ldots,z)$. Furthermore, we set
\[ \|\nabla^{(k)} f(x)\| \eqdef \sup_{z\in X:\;\|z\|_X\leq 1} \|\nabla^{(k)} f(x)(z,\ldots,z)\|_Y.\] 
Let $T_d(t)=\cos(d\arccos(t))$, $t\in[-1,1]$ denote the Chebyshev polynomial of degree $d$, and $T^{(j)}_d(t)$ its $j$-th order derivative. The following lemma is due to \cite{harris:08}~Theorem 1.

\begin{lemma}\label{lem:markov}
For $P\in\wp_d(\rset^m)$, and $k\geq 1$, let $P^{(k)}$ denote the $k$-th order derivative of $P$. We have
\[\sup_{x:\;\|x\|_2\leq 1} \|P^{(k)}(x)\| \leq T_d^{(k)}(1) \;\times \sup_{x:\;\|x\|_2\leq 1} |P(x)|,\]
where $\|P^{(k)}(x)\|\eqdef \sup_{y:\|y\|_2\leq 1} |P^{(k)}(x)(y,\ldots,y)|$.
\end{lemma}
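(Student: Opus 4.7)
The strategy is to reduce the multivariate statement to V.A.~Markov's classical univariate inequality,
$$\max_{t\in[-1,1]}|q^{(k)}(t)| \le T_d^{(k)}(1)\max_{t\in[-1,1]}|q(t)|,$$
valid for any real polynomial $q$ of degree at most $d$, by restricting $P$ to a suitably chosen chord of the Euclidean unit ball $B\eqdef\{x\in\rset^m:\|x\|_2\le 1\}$.

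First, I would fix $x_0\in B$ and $y\in\rset^m$ with $\|y\|_2\le 1$; by homogeneity of degree $k$ in $y$ of the symmetric multilinear form $P^{(k)}(x_0)(y,\ldots,y)$, it is enough to treat $\|y\|_2=1$. Next, I would identify the interval of $t\in\rset$ for which $x_0+ty\in B$: writing $c\eqdef x_0^\t y$ and $\rho\eqdef\sqrt{1-\|x_0\|_2^2+c^2}$, this is $[-\rho-c,\rho-c]$. After the affine change of variable $s=(t+c)/\rho$, which maps this interval onto $[-1,1]$ with $x_0$ corresponding to $s_0=c/\rho\in[-1,1]$ (note $|c|\le\rho$ since $\rho^2-c^2=1-\|x_0\|_2^2\ge 0$), the polynomial $q(s)\eqdef P(x_0+(\rho s-c)y)$ has degree at most $d$, satisfies $\|q\|_{[-1,1]}\le \sup_{\|x\|_2\le 1}|P(x)|$, and a chain-rule computation gives $q^{(k)}(s_0)=\rho^k\,P^{(k)}(x_0)(y,\ldots,y)$. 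A direct application of V.A.~Markov to $q$ at $s_0$ then yields
$$|P^{(k)}(x_0)(y,\ldots,y)|\le \frac{T_d^{(k)}(1)}{\rho^k}\,\sup_{\|x\|_2\le 1}|P(x)|.$$

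The main obstacle is to remove the spurious factor $\rho^{-k}\ge 1$, which blows up when the chord is short, i.e.\ when $y$ is nearly tangent to the sphere at a boundary point close to $x_0/\|x_0\|_2$. Harris's resolution exploits a sharper Bernstein-type refinement of V.A.~Markov at interior points of $[-1,1]$, of the form $|q^{(k)}(s)|\le c_{d,k}(1-s^2)^{-k/2}\|q\|_{[-1,1]}$, together with the geometric identity $1-s_0^2=(1-\|x_0\|_2^2)/\rho^2$. This produces a factor $(1-s_0^2)^{-k/2}\rho^{-k}=(1-\|x_0\|_2^2)^{-k/2}$ that is finite in the interior; combining it with V.A.~Markov's endpoint bound via a case split (evaluation point near the endpoints of the chord versus its interior) and then passing to the limit $\|x_0\|_2\uparrow 1$, the $\rho$-dependence disappears and one retains the constant $T_d^{(k)}(1)$. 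Sharpness of this constant is confirmed by the explicit extremizer $P(x)=T_d(x^\t y_0)$ with $\|y_0\|_2=1$, evaluated at $x_0=y_0$ and $y=y_0$ (in which case $\rho=1$ and the bound is tight).

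Finally, taking the supremum over $x_0\in B$ and $\|y\|_2\le 1$ gives the stated inequality. The result is a purely analytic fact about polynomials, independent of the statistical/PDE setup; it is used in the proof of Theorem \ref{thm:approx:deriv} via local polynomial (Taylor) approximation, which reduces the control of $D^{\bf k}(u-\tilde u)$ for Holder-smooth $u,\tilde u$ to a Markov-type inequality on a Euclidean ball.
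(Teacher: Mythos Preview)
The paper does not give its own proof of this lemma; it simply attributes the result to Harris (2008), Theorem~1, and uses it as a black box in the proof of Theorem~\ref{thm:approx:deriv}. Your sketch aligns with Harris's general strategy---restrict $P$ to a chord of the unit ball, reparametrize to $[-1,1]$, and invoke a univariate Markov/Bernstein inequality---and your chain-rule identity $q^{(k)}(s_0)=\rho^k P^{(k)}(x_0)(y,\ldots,y)$ and the geometric relation $1-s_0^2=(1-\|x_0\|_2^2)/\rho^2$ are both correct.

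The gap is in your removal of the $\rho^{-k}$ factor. After the Bernstein-type step you obtain $|P^{(k)}(x_0)(y,\ldots,y)|\le c_{d,k}(1-\|x_0\|_2^2)^{-k/2}\sup_B|P|$, which is independent of $y$ but still diverges as $\|x_0\|_2\to 1$; ``passing to the limit $\|x_0\|_2\uparrow 1$'' therefore makes the bound vacuous, not sharp. The plain V.A.~Markov bound gives $T_d^{(k)}(1)\rho^{-k}$, and since $\rho\ge\sqrt{1-\|x_0\|_2^2}$ with equality when $y\perp x_0$, the worst case over $y$ again produces a factor $(1-\|x_0\|_2^2)^{-k/2}$---so a naive case split between the two estimates cannot yield a bound that is uniform over the closed ball. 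What is missing is the structural step that forces the extremal configuration to have $\rho=1$: for $k=1$ this is Kellogg's classical argument that the maximum of $\|\nabla P\|$ over the ball is attained at a boundary point with the maximizing direction radial, and Harris's contribution is precisely the extension of this reduction to general $k$ via an argument on the symmetric multilinear form $P^{(k)}$. Without supplying that reduction, your outline stops one idea short of a proof.
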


We turn to the proof of Theorem \ref{thm:approx:deriv}. Let $\alpha=\bar\alpha$ as defined in the statement of the theorem, and fix $x_0\in\mathsf{int}_{\alpha}(\Omega)$. The function $u$ has derivatives to the order $\lfloor \beta\rfloor$ at $x_0$. By Taylor approximation we have, for all $z\in\rset^m$, with $\|z\|_2\leq 1$, setting $r\eqdef \lfloor \beta \rfloor -1$,
\begin{multline*}
u(x_0 + \alpha z) = \\
u(x_0) + \alpha\nabla u(x_0)\cdot z + \cdots + \frac{\alpha^r}{r!}\nabla^{(r)} u(x_0)\cdot z^r + \frac{\alpha^{\lfloor \beta \rfloor}}{r!}\int_0^1(1-t)^r \nabla^{(\lfloor \beta \rfloor)}u\left(x_0 +\alpha t z\right)\cdot z^{\lfloor \beta \rfloor}\rmd t\\
 = u(x_0) + \alpha \nabla u(x_0)\cdot z + \cdots + \frac{\alpha^{\lfloor \beta \rfloor}}{\lfloor \beta \rfloor !}\nabla^{(\lfloor \beta \rfloor)} u(x_0)\cdot z^{\lfloor \beta \rfloor} + R_u(z),
\end{multline*}
where, using the fact that $u\in\Cset^\beta(\Omega,M)$, the remainder satisfies
\begin{multline*}
|R_u(z)| = \frac{\alpha^{\lfloor \beta \rfloor}}{r!}\left|\int_0^1(1-t)^r \left(\nabla^{(\lfloor \beta \rfloor)}u\left(x_0 +\alpha t z\right) - \nabla^{(\lfloor \beta \rfloor)}u(x_0)\right)\cdot z^{\lfloor \beta \rfloor}\rmd t\right| \\
\leq \frac{\alpha^{\lfloor \beta \rfloor}}{r!}\int_0^1 (1-t)^r M (\alpha t)^{\beta- \lfloor \beta \rfloor}\|z\|_2^{\beta- \lfloor \beta \rfloor} \|z\|_\infty^{\lfloor \beta \rfloor}\rmd t \leq \frac{M \alpha^\beta}{\lfloor \beta \rfloor !}.
\end{multline*}
A similar expansion holds for $\tilde u$. And since $x_0+\alpha z\in\Omega$, we have $|u(x_0+\alpha z) -\tilde u(x_0+\alpha z)| \leq \varepsilon$. Therefore, setting
\begin{multline*}
P(z) \eqdef u(x_0)-\tilde u(x_0) + \alpha (\nabla u(x_0)-\nabla\tilde u(x_0))\cdot z \\
+ \cdots + \frac{\alpha^{\lfloor \beta \rfloor}}{\lfloor \beta \rfloor !}\left(\nabla^{(\lfloor \beta \rfloor)} u(x_0) - \nabla^{(\lfloor \beta \rfloor)} \tilde u(x_0)\right)\cdot z^{\lfloor \beta \rfloor},\end{multline*}
we conclude that
\begin{equation}\label{eq1}
\sup_{z\in\rset^m:\;\|z\|_2\leq 1} |P(z)| \leq \varepsilon + \frac{2M \alpha^{\beta}}{ \lfloor \beta \rfloor!}
\end{equation}
We note that the function $z\mapsto P(z)$ is infinitely differentiable polynomial of degree $\lfloor \beta \rfloor$ on $\rset^m$, and for $0\leq \tau\leq \lfloor \beta \rfloor$, and $y\in\rset^m$,
\begin{equation}\label{nabla:P}
\nabla^{(\tau)} P(z)\cdot y^\tau  = \sum_{j=\tau}^{\lfloor \beta \rfloor} \frac{\alpha^j}{(j-\tau)!} \left(\nabla u^{(j)}(x_0) - \nabla \tilde u^{(j)}(x_0)\right)\cdot\left(z^{j-\tau},y^\tau\right).\end{equation}

On the other hand, the function $x\mapsto \nabla^{(\tau)} u(x):\;\Omega\mapsto \mathcal{L}(\rset^m\times \cdots\times \rset^m;\rset)$ has derivatives to order $\lfloor \beta \rfloor -\tau$ and $(\nabla^{(j)} \nabla^{(\tau)} u(x_0)\cdot y^j)\cdot x^\tau = \nabla^{(\tau+j)}u(x_0)\cdot(y^j,x^\tau)$. Setting $k = \lfloor \beta \rfloor - \tau$, its Taylor expansion at $x_0$ yields
\begin{multline*}
\nabla^{(\tau)} u(x_0 +\alpha z) = \underbrace{\nabla^{(\tau)} u(x_0) + \alpha\nabla^{(\tau+1)}u(x_0)\cdot z + \cdots + \frac{\alpha^k}{k!}\nabla^{(\lfloor \beta \rfloor)} u(x_0)\cdot z^{k}}_{Q(z)} + R'(z),
\end{multline*}
and
\begin{multline*} 
\nabla^{(\tau)} \tilde u(x_0+\alpha z) = \underbrace{\nabla^{(\tau)} \tilde u(x_0) + \alpha \nabla^{(s+1)}\tilde u(x_0)\cdot z + \cdots + \frac{\alpha^k}{k!}\nabla^{(\lfloor \beta \rfloor)} \tilde u(x_0)\cdot z^{k}}_{\tilde Q(z)}+ \tilde R'(z).
\end{multline*}
Using the same calculations as above, we check that the remainders $R'$ and $\tilde R'$ satisfy
\[\sup_{z\in\rset^d:\;\|z\|_2\leq 1}\; \|R'(z)\| + \|\tilde R'(z)\| \leq \frac{2M\alpha^{\beta - \tau}}{(\lfloor \beta \rfloor - \tau)!}.\]
Using the last display, and noting that $\alpha^\tau(Q(z) -\tilde Q(z)) = \nabla^{(\tau)} P(z)$ given in (\ref{nabla:P}), it follows that for all $z$ in the unit ball,
\[\|\nabla^{(\tau)} u(x_0+\alpha z) - \nabla^{(\tau)} \tilde u(x_0+\alpha z)\| \leq \frac{1}{\alpha^\tau}\|\nabla^{(\tau)} P(z)\| +\frac{2M\alpha^{\beta - \tau}}{(\lfloor \beta \rfloor - \tau)!}.\]
Therefore, by Markov's polynomial inequality (Lemma \ref{lem:markov}), for all $z$ in the unit ball
\begin{multline*}
\|\nabla^{(\tau)} u(x_0+\alpha z) - \nabla^{(\tau)} \tilde u(x_0+\alpha z)\| \leq \frac{T^{(\tau)}_{\lfloor \beta \rfloor}(1)}{\alpha^\tau}\left(\varepsilon + \frac{2M\alpha^\beta}{\lfloor \beta \rfloor!}\right)  + \frac{2M \alpha^{\beta-\tau}}{(\lfloor \beta \rfloor -\tau)!}\\
= \frac{A}{\alpha^\tau} + B \alpha^{\beta-\tau},\end{multline*}
where
\[A = \varepsilon T^{(\tau)}_{\lfloor \beta \rfloor}(1),\;\; B = \frac{2M \times T^{(\tau)}_{\lfloor \beta \rfloor}(1)}{\lfloor \beta \rfloor!} + \frac{2M}{(\lfloor \beta \rfloor-\tau)!}.\]

With the choice
\[\alpha = \left(\frac{\tau A}{(\beta-\tau)B}\right)^{1/\beta} \leq \left(\frac{\tau\varepsilon \lfloor \beta \rfloor!}{2M(\beta-\tau)}\right)^{1/\beta},\]
we get
\[ \|\nabla^{(\tau)} u(x_0) - \nabla^{(\tau)} \tilde u(x_0)\|\leq C M^{\frac{\tau}{\beta}} \varepsilon^{\frac{\beta-\tau}{\beta}},\]
for some constant $C$ that depends only on $\tau$ and $\beta$. In fact, $C$ can be taken as
\[C = \frac{\beta T^{(\tau)}_{\lfloor \beta \rfloor}(1)}{\beta-\tau}\left(\frac{\beta-\tau}{\tau}\right)^{\tau/\beta} \leq \frac{\beta}{\tau} T^{(\tau)}_{\lfloor \beta \rfloor}(1) .\]
\end{proof}

\subsection{Proof of Theorem \ref{thm:main_param}}\label{sec:proof:thm:main_param}
We start with a lemma that establishes that the covering number of $\Gamma(\Theta)$ grows polynomially near $0$, which consequently implies that $\Gamma(\Theta)$ has a finite VC dimension. This effectively proves that $\Gamma(\Theta)$ is a finite-dimensional subset of $\cC^{\beta}(\Omega)$, serving as a cornerstone of our main theorem.

\begin{lemma}
\label{lem:cover_gamma}
Given any $\eps > 0$, we have: 
$$
\cN\left(\eps, \Gamma(\Theta), L_\infty\right) \le C\left(\frac{c}{\eps}\right)^d \,,
$$
for some constants $C, c > 0$. 
\end{lemma}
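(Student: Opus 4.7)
The plan is to pull the covering problem back from the infinite-dimensional space $\mathcal{C}^\beta(\Omega)$ to the finite-dimensional compact set $\Theta\subset\rset^d$ by using the Lipschitz continuity of the solution map $\Gamma$ provided by Assumption H\ref{assm:gamma_smooth}. Since $\Theta$ is compact in $\rset^d$, its Euclidean diameter $D \eqdef \sup_{\theta_1,\theta_2\in\Theta}\|\theta_1-\theta_2\|_2$ is finite, and a standard volumetric argument yields the bound
\[
\cN(\delta, \Theta, \|\cdot\|_2) \;\leq\; C_d\left(\frac{D}{\delta}\right)^d
\]
for every $\delta>0$, where $C_d$ depends only on the ambient dimension $d$.

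First, I would fix $\varepsilon>0$ and set the pull-back scale $\delta \eqdef \varepsilon/L$, where $L$ is the Lipschitz constant from H\ref{assm:gamma_smooth}. Then I would select a $\delta$-net $\{\theta_1,\ldots,\theta_N\}\subset\Theta$ with $N\leq C_d(LD/\varepsilon)^d$. The claim I would verify is that $\{\Gamma(\theta_1),\ldots,\Gamma(\theta_N)\}$ is an $\varepsilon$-cover of $\Gamma(\Theta)$ in the $L_\infty$ norm: given any $u=\Gamma(\theta)\in \Gamma(\Theta)$, there is some $j$ with $\|\theta-\theta_j\|_2\leq \delta$, and hence by H\ref{assm:gamma_smooth},
\[
|u-\Gamma(\theta_j)|_\infty \;=\; |\Gamma(\theta)-\Gamma(\theta_j)|_\infty \;\leq\; L\,\|\theta-\theta_j\|_2 \;\leq\; L\delta \;=\; \varepsilon.
\]
This gives $\cN(\varepsilon,\Gamma(\Theta),L_\infty) \leq C_d(LD/\varepsilon)^d$, which is the stated bound with $C=C_d$ and $c=LD$.

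Since the argument is completely elementary once H\ref{assm:gamma_smooth} is in hand, there is no real obstacle here; the only mild points to be careful about are invoking a standard covering bound for bounded subsets of $\rset^d$ (which produces the absolute constant $C_d$) and observing that a net in $\Theta$ pushes forward via any Lipschitz map to a net in the image at the inflated scale $L\delta$. The lemma then serves as the finite-dimensional complexity input needed to control the empirical process in the proof of Theorem \ref{thm:main_param}.
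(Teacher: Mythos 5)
Your proof is correct and follows essentially the same route as the paper: cover the compact set $\Theta\subset\rset^d$ with a standard volumetric bound, then push the net forward through the Lipschitz map $\Gamma$ so that an $(\eps/L)$-net of $\Theta$ becomes an $\eps$-net of $\Gamma(\Theta)$ in $L_\infty$. The only cosmetic difference is that you phrase the Euclidean covering bound via the diameter while the paper uses the volume ratio $\mathrm{vol}(\Theta)/\mathrm{vol}(B)\,(3/\eps)^d$; both yield the claimed $C(c/\eps)^d$ form.
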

\begin{proof}
Recall that $\Theta$ is a compact subset of $\reals^d$, which implies $\vol(\Theta) < \infty$. Therefore, from the standard covering number calculation, we have: 
$$
\cN(\eps, \Theta, L_2) \le \frac{\vol(\Theta)}{\vol(B)}\left(\frac{3}{\eps}\right)^d \triangleq C\left(\frac{3}{\eps}\right)^d  \,,
$$
where $B$ is the unit ball (centered at the origin) in $\reals^d$ (e.g., see Theorem 14.2 of Yihong Wu's lecture notes, ECE598, Spring 2016). We next claim that: 
$$
\cN\left(\eps, \Gamma(\Theta), L_\infty\right) \le \cN(\eps/L, \Theta, L_2) \,.
$$
which will complete the proof. To show this, suppose $\{\theta_1, \dots, \theta_n\}$ are the center of $(\eps/L)$-covers of $\Theta$, i.e. 
$$
\sup_{\theta \in \Theta} \min_{1 \le j \le n} \|\theta_j - \theta\|_2 \le \eps/L \,.
$$
where $n = \cN(\eps/L, \Theta, L_2)$. Now consider the images corresponding to these $\theta$'s, meaning, $\{\gamma_1, \dots, \gamma_n\}$ where $\gamma_i = \Gamma(\theta_i)$. Then we have: 
$$
\sup_{\theta \in \Theta} \min_{1 \le j \le n} \|\Gamma(\theta_j) - \Gamma(\theta)\|_\infty \le L \sup_{\theta \in \Theta} \min_{1 \le j \le n}\|\theta_j - \theta\|_2 \le L (\eps/L) = \eps \,.
$$
Hence, the proof of the lemma is complete with $c = 3L$. 
\end{proof}

\begin{proof}[Proof of Theorem \ref{thm:main_param}]
We use the standard rate theorem (e.g., see 3.2.5 of \cite{vaart1997weak}) to establish the convergence rate of $|\hat u - u_\star|_2^2$. Define, for simplicity, $M(u) = \bbE[(Y - u({\bf s}))^2]$ and $M_n(u) = (1/n) \sum_i (Y_i - u({\bf s}_i))^2$. Therefore, from \eqref{eq:est} we have $\hat u$ is the minimizer of $M_n(u)$ over $\Gamma(\Theta)$. Furthermore, we have: 
$$
M(u) - M(u_\star) = |u - u_\star|_2^2 \,.
$$
To establish the rate of convergence, we need to find the modulus of continuity function $\phi_n(\delta)$ that satisfies: 
$$
\bbE\left[\sup_{|u - u_\star|_2 \le \delta}\left|\left(M_n(u) - M(u)\right) - \left(M_n(u_\star) - M(u_\star)\right)\right|\right] \le \frac{\phi_n(\delta)}{\sqrt{n}} \,.
$$
Let $\xi_i \eqdef(Y_i -u_\star({\bf s}_i))/\sigma$. Some simple algebra yields: 
\begin{align*}
& \left|\left(M_n(u) - M(u)\right) - \left(M_n(u_\star) - M(u_\star)\right)\right| \\
& = \left|(\bbP_n - \bbP)(u({\bf s}) - u_\star({\bf s}))^2 + \sigma(\bbP_n  - \bbP)\xi(u({\bf s}) - u_\star({\bf s}))\right| \\
& \le \left|(\bbP_n - \bbP)(u({\bf s}) - u_\star({\bf s}))^2\right| + \sigma \left|(\bbP_n  - \bbP)\xi(u({\bf s}) - u_\star({\bf s}))\right|
\end{align*}
Therefore, by symmetrization we have (using $(\eta_1, \dots, \eta_n)$ i.i.d Rademacher random variables): 
\begin{align*}
& \bbE\left[\sup_{|u - u_\star|_2 \le \delta}\left|\left(M_n(u) - M(u)\right) - \left(M_n(u_\star) - M(u_\star)\right)\right|\right]  \\
& \le \bbE\left[\sup_{|u - u_\star|_2 \le \delta} \left|(\bbP_n - \bbP)(u({\bf s}) - u_\star({\bf s}))^2\right|\right] + \sigma\bbE\left[\sup_{|u - u_\star|_2 \le \delta} \left|(\bbP_n  - \bbP)\xi(u({\bf s}) - u_\star({\bf s}))\right|\right] \\
& \le 2\bbE\left[\sup_{|u - u_\star|_2 \le \delta} \left|\frac1n \sum_i \eta_i (u({\bf s}_i) - u_\star({\bf s}_i))^2\right|\right] + \sigma\bbE\left[\sup_{|u - u_\star|_2 \le \delta} \left|\frac1n \sum_i \xi_i (u({\bf s}_i) - u_\star({\bf s}_i))\right|\right] \\
& \le 2B \bbE\left[\sup_{|u - u_\star|_2 \le \delta} \left|\frac1n \sum_i \eta_i (u({\bf s}_i) - u_\star({\bf s}_i))\right|\right] + \sigma\bbE\left[\sup_{|u - u_\star|_2 \le \delta} \left|\frac1n \sum_i \xi_i (u({\bf s}_i) - u_\star({\bf s}_i))\right|\right] \\
& \le C_1 \left(\delta \sqrt{\frac{d}{n}\log{\left(\frac{C_2}{\delta}\right)}} + \frac{d}{n} \log{\left(\frac{C_2}{\delta}\right)}\right)
\end{align*}
Here, in the second last inequality, we use Leduox-Talagrand contraction inequality. The last line follows Dudley's chaining bound and some standard calculations (e.g., proof of Theorem 8.7 of \cite{sen2018gentle}). Therefore, a valid choice of $\phi_n(\delta)$ is: 
$$
\phi_n(\delta) = C_1  \left(\delta \sqrt{d\log{\left(\frac{C_2}{\delta}\right)}} + \frac{d}{\sqrt{n}} \log{\left(\frac{C_2}{\delta}\right)}\right) \,.
$$
Therefore, we conclude that $\|\hat u - u_\star\|_2 = O_p(\delta_n)$ where $\delta_n$ satisfies: 
$$
\sqrt{n}\delta_n^2 \ge \phi_n(\delta_n) \,.
$$
Some simple algebra yields that a valid choice for $\delta_n$ is $\sqrt{(d/n)\log{(n/d)}}$, which completes the proof. 
\end{proof}

\medskip

\bibliographystyle{ims}
\bibliography{biblio}

\end{document}


\maketitle

\section{A detailed example}

Here we include some equations and theorem-like environments to show
how these are labeled in a supplement and can be referenced from the
main text.
Consider the following equation:
\begin{equation}
  \label{eq:suppa}
  a^2 + b^2 = c^2.
\end{equation}
You can also reference equations such as \cref{eq:matrices,eq:bb} 
from the main article in this supplement.

\lipsum[100-101]

\begin{theorem}
An example theorem.
\end{theorem}

\lipsum[102]
 
\begin{lemma}
An example lemma.
\end{lemma}

\lipsum[103-105]

Here is an example citation: \cite{KoMa14}.

\section[Proof of Thm]{Proof of \cref{thm:bigthm}}
\label{sec:proof}

\lipsum[106-112]

\section{Additional experimental results}
\Cref{tab:smfoo} shows additional
supporting evidence. 

\begin{table}[htbp]
\footnotesize
  \caption{Example table.}\label{tab:smfoo}
\begin{center}
  \begin{tabular}{|c|c|c|} \hline
   Species & \bf Mean & \bf Std.~Dev. \\ \hline
    1 & 3.4 & 1.2 \\
    2 & 5.4 & 0.6 \\ \hline
  \end{tabular}
\end{center}
\end{table}

\bibliographystyle{siamplain}
\bibliography{references}